\author{Léo Brunswic}
\def\title{BTZ extensions of globally hyperbolic singular flat spacetimes}
  \def\Z{\mathbb{Z}}
  \def\H{\mathbb{H}}
  \def\U{\mathcal U}
\def\V{\mathcal V}
\def\C{\mathscr C}
\def\R{\mathbb{R}}
\def\E{\mathbf E}
\def\N{\mathbb N}
\def\T{\mathcal{T}}
\def\d{\mathrm{d}}
\def\D{\mathcal{D}}
\def\SO{\mathrm{SO}}
\def\E{\mathbb{E}}
\def\BTZ{{\E_{0}^{1,2}}}
\def\sing{\mathrm{Sing}}
\def\isom{\mathrm{Isom}}
\def\L{\mathrm{L}}
\newcommand{\fonction}[5]{\displaystyle#1:\begin{array}{l|rcl}
& \displaystyle #2 & \longrightarrow & \displaystyle #3 \\
    & \displaystyle #4 & \longmapsto & \displaystyle #5 \end{array}}
\newtheorem{theo}{\bf{Theorem}}
\newtheorem{theo_ext}{\bf{Theorem}}[section]
\newtheorem{lem}{Lemma}[section]
\newtheorem{cor}[lem]{Corollary}
\newtheorem{prop}[lem]{Proposition}
\newtheorem{defi}[lem]{Definition}
\newtheorem{example}[lem]{Example}
\newtheorem{rem}[lem]{Remark}
\newtheorem{conjecture}{Conjecture}
\begin{document}
 
\thispagestyle{empty}
\begin{minipage}[t]{0.5\textwidth}
%\textsc{Formation} \\
\textit{Laboratoire de Mathématiques d'Avignon \\
Université d'Avignon et des pays du Vaucluse}
\end{minipage}
\hfill
\begin{minipage}[t]{0.35\textwidth}
\textbf{Léo Brunswic\\
\today\\
}
\end{minipage}
\vspace{.5cm}
\begin{center}
\begin{minipage}{0.7\textwidth}
\hrule \vspace*{1.0cm}
\begin{center}
\LARGE
\textbf{\title }
\vspace{.5cm}
\end{center}
\hrule
\end{minipage}
\vspace{4cm}
\end{center}

\begin{center}
\hspace{1.5cm}

\end{center}

\vspace{-0cm}
\begin{center}

\end{center}
\newpage
\begin{center}
\begin{abstract}
    Minkowski space, namely $\R^3$ endowed with the quadradic form $-\d t^2+\d x^2+\d y^2$, is the local model of 3 dimensionnal 
    flat spacetimes. 
    Recent progress in the description of globally hyperbolic flat spacetimes showed strong link between Lorentzian geometry and Teichmüller space. 
    We notice that Lorentzian generalisations of conical singularities are useful for the endeavours of descripting flat spacetimes,
    creating stronger links with hyperbolic geometry and compactifying spacetimes. In particular massive particles
    and extreme BTZ  singular lines arise naturally. 
    This paper is three-fold. First, prove background properties which will be useful for future work. Second generalise fundamental theorems
    of the theory of globally hyperbolic flat spacetimes. Third, defining BTZ-extension and proving it preserves Cauchy-maximality and 
    Cauchy-completeness.
\end{abstract}

\end{center}

	\setcounter{tocdepth}{3}
	\tableofcontents
	 
	\newpage

\newpage

  \section{Introduction}
  
 \subsection{Context and motivations} 
      The main interest of our study are {\it singular flat globally hyperbolic Cauchy-complete spacetimes}.  
      This paper is part of a longer-term objective : construct correspondances between spaces of hyperbolic surfaces, singular spacetimes and singular Euclidean surfaces.
      A central point which underlies this entire paper as well as a following to come is as follows.
      
      Starting from a compact surface $\Sigma$ with a finite set $S$ of marked points, the Teichmüller space of $(\Sigma,S)$ is
      the set of complete hyperbolic metric on $\Sigma\setminus S$ up to isometry. The universal cover of a point of 
      the Teichmüller space 
      is the Poincaré disc $\H^2$ which embbeds in the 3-dimensional {\it Minkowki space}, denoted by $\E^{1,2}$.
      Namely, Minkoswki space is $\R^3$ endowed with the undefinite quadratic form $-\d t^2+\d x^2+\d y^2$ where $(t,x,y)$ are the 
      carthesian coordinates of $\R^3$ and the hyperbolic plane embbeds as the quadric $\{-t^2+x^2+y^2=-1, t>0\}$. 
      It is in fact in the cone $C=\{t>0, -t^2+x^2+y^2<0\}$ which direct isometry group is exactly the group of isometry of the Poincaré disc : $\SO_0(1,2)$.
      A point of Teichmüller space can then be described as a representation of the fundamental group 
       $\pi_1\left(\Sigma\setminus S\right)$ in $\SO_0(1,2)$ which image is a lattice $\Gamma$. 
       
      If the set of marked points is trivial, $S=\emptyset$, then the lattice $\Gamma$ is uniform. The hyperbolic surface 
      $\H^2/\Gamma$ embeds into $C/\Gamma$ giving our first non trivial examples of flat globally hyperbolic Cauchy-compact spacetimes.
     
      If on the contrary, the set of marked point is not trivial, $S\neq \emptyset$, then the lattice $\Gamma$ contains
      parabolic isometries each of which fixes point-wise a {\it null} ray on the boundary of the cone $C$. The cusp of the 
      hyperbolic metric on $\Sigma \setminus S$ correspond bijectively to the equivalence classes of these null rays under
      the action of $\Gamma$. More generally, take a discrete subgroup $\Gamma$ of $\SO_0(1,2)$. 
      The group $\Gamma$ may have an elliptic isometries {\it i.e.} a torsion part. 
      Therefore,  on the one hand $\H^2/\Gamma$ is a complete hyperbolic surface with conical singularities.
      On the other hand, $C/\Gamma$ is 
      a flat spacetime with a Lorentzian analogue of conical singularities : {\it massive particles}. 
      This spacetime admits a connected sub-surface which intersects exactly once every rays from the origin in the cone $C$. 
      Furthermore, this sub-surface is naturally endowed with a riemannian metric with respect to which it is complete.

      As an example, consider the modular group $\Gamma=\mathrm{PSL}(2,\Z)$. A fundamental domain of the action of $\mathrm{PSL}(2,\Z)$  
      on $\H^2$ is decomposed into two triangles isometric to the same ideal hyperbolic triangle $T$ of angles $\frac{\pi}{2}$ and $\frac{\pi}{3}$  .
      The surface $\H^2/\Gamma$ is then obtained by gluing edge to edge these two triangles (see \cite{ratcliff_foundation} for more details about the modular group).
      The {\it suspension} of the hyperbolic triangle $T$ is $\mathrm{Susp}(T)=\R_+^*\times T$ with the metric $-\d t^2 + t^2\d s_T^2$. It can be realised 
      as a cone of triangular basis in Minkoswki space as shown on figure \ref{fig:modular}.a. 
       An edge of the triangulation of $\H^2/\Gamma$ corresponds to a face of one of the two suspensions,  
      then the suspensions can be glued together face to face accordingly. In this way we obtain this way a flat spacetime but
       the same way the vertices of the triangulation give rise to conical singularities, the vertical
      edges will give rise to singular lines in our spacetime. There are three singular lines we can put in two categories 
      following the classification of Barbot, Bonsante and Schlenker  \cite{Particules_1, Particules_2}. 
      \begin{itemize}
       \item  Two {\it massive particles} going through the conical singularities of $\H^2/\Gamma$.
       The corresponding vertical edges are endowed with a negative-definite semi-riemannian metric.
       \item  One {\it extreme BTZ-line} toward which the cusp of $\H^2/\Gamma$ seems to tend like in figure \ref{fig:modular}.b.
       The corresponding vertical edge     is endowed with a null semi-riemannian metric.
      \end{itemize}
      
      The spacetime $C/\Gamma$ can be recovered by taking the complement of the extreme BTZ-line. Still, we constructed 
      something more which satisfies two interesting properties.
      \begin{itemize}
       \item Take a horizontal plane in Minkoswki space above the origin. It intersects $\mathrm{Susp}(T)$ along a Euclidean triangle. 
       The gluing of the suspensions induces a gluing of the corresponding Euclidean triangles. We end up with a 
       {\it polyhedral surface} which  intersects exactly once every rays from the origin : our singular spacetime with 
       extreme BTZ-line have a polyhedral {\it Cauchy-surface}.
       
       \item  This polyhedral surface is compact. Therefore, the spacetime with extreme BTZ-line 
       is {\it Cauchy-compact} when $C/ \Gamma$ is merely {\it Cauchy-complete}.
      \end{itemize}      
       \begin{figure}[h]
       \begin{tabular}{l|l}
        a) A fundamental domain of $\mathrm{PSL}(2,\Z)$ in $\H^2$. & b) An embedding in $\E^{1,2}$ and its suspension.\\ 
      \includegraphics[width=0.46\linewidth]{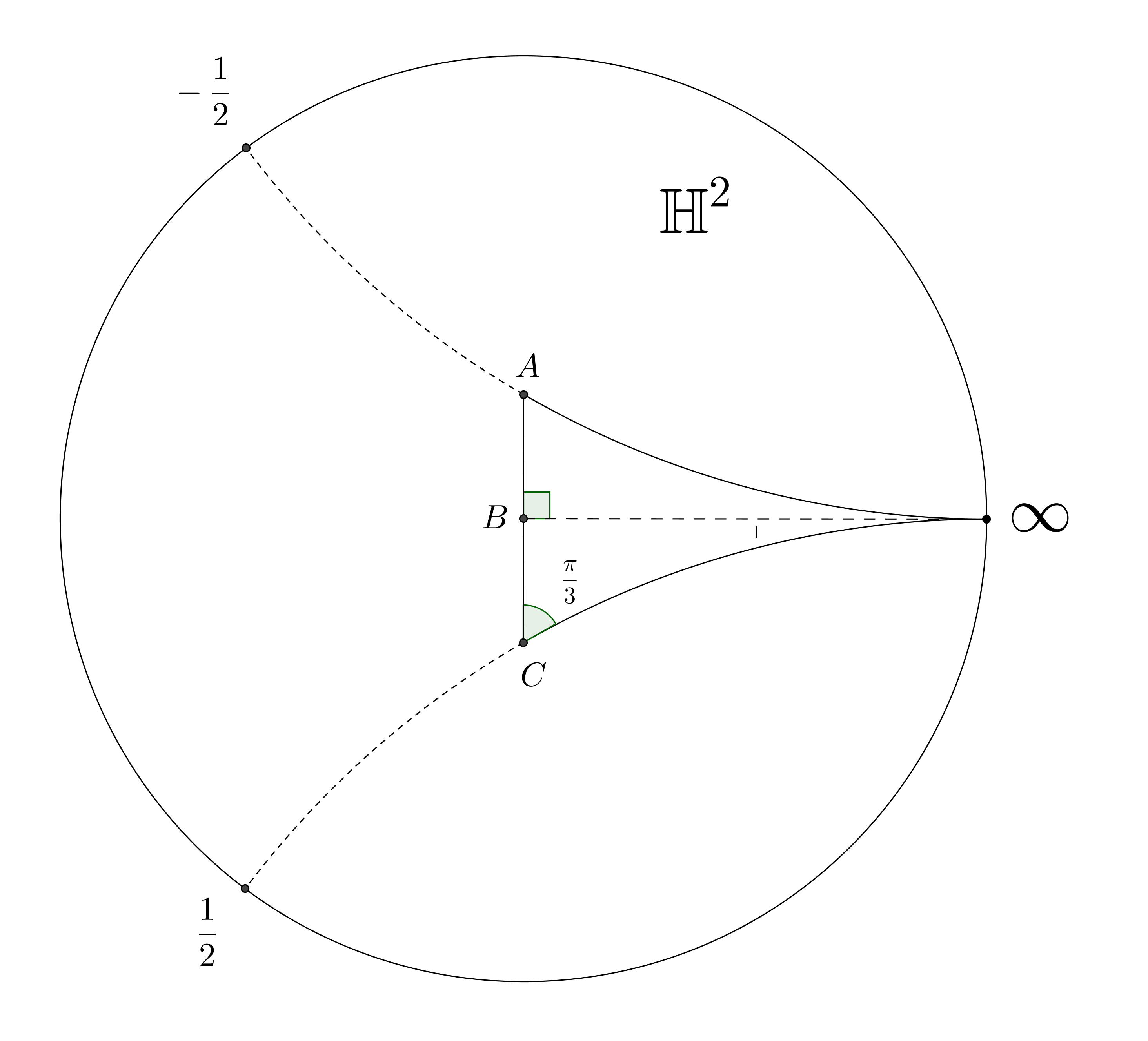}&
      \includegraphics[width=0.46\linewidth]{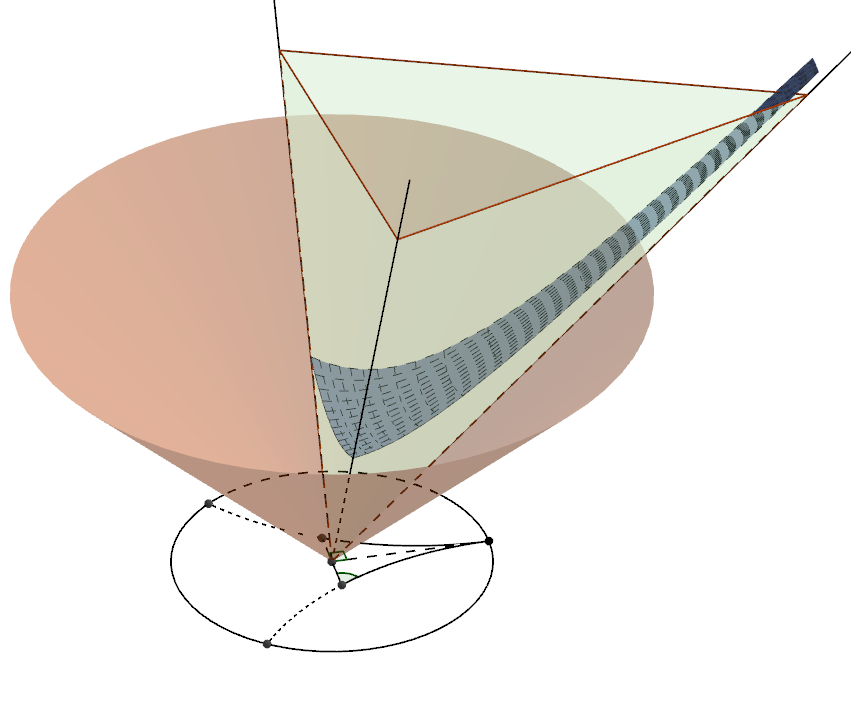}

       \end{tabular}

           \begin{caption}{Fundamental domain of the modular group and its suspension.}\label{fig:modular}
   The fundamental domain of the modular group is represented on the left in the Poincaré disc. The triangles $[AB\infty]$
      and $[CB\infty]$ are symetric with respect to the line $(B\infty)$. The modular group sends the edge $[BC]$ on the edge $[BA]$
      via an elliptic isometry of angle $\pi$. It sends the edge $[A\infty]$ to $[C\infty]$ via a parabolic isometry of center $\infty$.
      On the right is depicted the natural embedding of this fundamental domain in Minkoswki space in deep blue. The light 
      blue cone of triangular basis is its suspension. The null cone of Minkoswki space is in red. The stereograpic projection 
      of the Poincaré disc is depicted on the horizontal plane  $\{t=0\}$ . 
         
      \end{caption}
      \end{figure}	 
      This paper is devoted to the description of the process by which BTZ-lines are added and      
      how it interacts with global properties of the spacetime : {\it global hyperbolicity}, {\it Cauchy-completeness} and {\it Cauchy-maximality}.
      Since a general theory of such singular spacetimes is lacking, part of the paper is devoted to background properties.
      A following paper will be devoted to the construction of singular Euclidean surfaces in singular spacetimes 
      as well as a correspondance
      between hyperbolic, Minkowskian and Euclidean objects. Some compactification properties will also be dealt with.
     \subsection{Structure of the paper and goals}
      The paper gives the definition of {\it singular spacetimes} as well as 
      {\it Cauchy-something} properties and develop
      some basic properties in section \ref{subsection:sing_spacetime_defi}. 
      Its primary objectives are the following
      \begin{enumerate}[I.]
       \item  Define a notion of {\it BTZ-extension} and prove a maximal BTZ-extension existence and uniqueness theorem.
       This is Theorem \ref{theo:BTZ_ext} in Section \ref{sec:catch_BTZ}.
       \item  Prove that Cauchy-completeness and Cauchy-maximality are compatible with BTZ-extensions. 
       This is Theorem \ref{theo:BTZ_Cauchy-completeness} in section \ref{sec:complete_stability}.
      \end{enumerate}
      Some secondary objectives are needed both to complete the picture and to the proofs of the main theorems.
      \begin{enumerate}[i.]
       \item Prove local rigidity property which is an equivalent of local unicity of solution of Einstein equations in our context. 
       This ensure we have a maximal Cauchy-extension existence and uniqueness theorem, much alike the one of Choquet-Bruhat-Geroch, stated in Section \ref{subsec:choquet_bruhat}. 
       The local rigidity  is done in Section \ref{subsubsec:local_rigidity}.

       \item Prove the existence of a smooth Cauchy-surface in a globally hyperbolic singular spacetime.  Theorem \ref{theo:smooth} 
       proves it in Section \ref{subsec:smooth}.
       \item Show that in a Cauchy-maximal spacetime, BTZ-singular lines are complete in the future and posess standard neighborhoods.
       A proof is given in Section \ref{subsec:choquet_bruhat}
      \end{enumerate}

    \subsection{Global properties of regular spacetimes}\label{sec:reg_spacetimes}
      \subsubsection{$(G,X)$-structures}\label{sec:G_X}
	$(G,X)$-structures are used in the preliminary of  the present work and may need some reminders. 
	Let $X$ be a topological space and $G \subset \mathrm{Homeo}(X)$ be a group of homeomorphism.
	The couple $(G,X)$ is an analytical structure if two elements of $G$ agreeing on a 
	non trivial open subset of $X$ are equal.
	
	Given $(G,X)$ an analytical structure and $M$ a Hausdorff topological space,
	a $(G,X)$-structure on
	$M$ is the data of an atlas $(\U_i,\varphi_i)_{i\in I}$ where 
	$\varphi_i : \U_i \rightarrow \V_i\subset X$ are homeomorphisms such that for every 
	$i,j\in I,$ there exists an element $g\in G$ agreeing with $\varphi_j \circ \varphi_i^{-1}$ on 
	$\V_i\cap \phi_i(\U_j\cap \U_i)$. 
	A manifold together with a $(G,X)$-structure is a $(G,X)$-manifold.

	The morphisms $M\rightarrow M'$ of $(G,X)$-manifolds are the functions 
	$f:M\rightarrow M'$ such that
	for all couples of charts $(\U,\varphi)$ and $(\U',\varphi')$ 
	of $M$ and $M'$ respectively, 
	$\varphi'\circ f\circ \varphi^{-1} : \varphi(\U\cap f^{-1}(\U')) \rightarrow \varphi'(\U')$ is the restriction of an element of $G$. 
	Given a local homeomorphism $f:M\rightarrow N$ between differentiable manifolds, for every 
	$(G,X)$-structure on $N$, there exists a unique $(G,X)$-structure on $M$ such that $f$
	is a $(G,X)$-morphism.

	Writing $\widetilde M $ the universal covering of a manifold $M$ and $\pi_1(M)$ its fundamental group,  
	there exists a unique $(G,X)$-structure on $\widetilde M$ such that the projection $\pi: \widetilde M \rightarrow M$ is a $(G,X)$-morphism.

	\begin{prop}[Fundamental property of $(G,X)$-structures]
	  Let $M$ be a $(G,X)$-manifold.
	  There exists a map $\D:\widetilde M \rightarrow X$ called the developping map, unique up to composition by an element of $G$ ;
	  and a morphism $\rho: \pi_1(M) \rightarrow G$, unique up to conjugation by an element of $G$ such that $\D$ is a $\rho$-equivariant 
	  $(G,X)$-morphism. 
	\end{prop}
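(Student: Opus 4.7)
The plan is to construct the developing map by analytic continuation of a fixed initial chart along paths in $\widetilde M$, and then to read off the holonomy from the induced action of deck transformations. I would begin by transporting the $(G,X)$-atlas of $M$ to $\widetilde M$ via the covering projection, so that charts on $\widetilde M$ have the form $(\widetilde\U, \widetilde\varphi)$ with $\widetilde\varphi$ mapping into $X$ and transition maps locally restrictions of elements of $G$. Fix a base point $\tilde x_0 \in \widetilde M$ lying in a chart $(\widetilde\U_0, \widetilde\varphi_0)$, and declare $\D$ to coincide with $\widetilde\varphi_0$ on $\widetilde\U_0$.

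For any other $\tilde y \in \widetilde M$, I would choose a path $\gamma:[0,1]\to \widetilde M$ from $\tilde x_0$ to $\tilde y$ and a finite cover of $\gamma([0,1])$ by connected charts $(\widetilde\U_k,\widetilde\varphi_k)_{k=0,\dots,n}$ with $\widetilde\U_k\cap \widetilde\U_{k+1}$ non-empty and connected. Inductively, I would modify $\widetilde\varphi_k$ by the unique $g_k \in G$ given by the $(G,X)$-compatibility condition so that the modified chart agrees with the previous one on the overlap; the value of $\D(\tilde y)$ is then defined as the image under the last modified chart. The analyticity hypothesis on $(G,X)$ is what guarantees uniqueness of each $g_k$.

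Second, I would verify independence of the choices. Independence of the subordinate chart cover for a fixed $\gamma$ follows by passing to a common refinement and invoking analyticity chart by chart. Independence of $\gamma$ is then a monodromy argument: since $\widetilde M$ is simply connected, any two paths from $\tilde x_0$ to $\tilde y$ are homotopic, and along a homotopy the finitely many gluing elements of $G$ must deform continuously, hence be locally constant, hence constant. This delivers a well-defined map $\D:\widetilde M\to X$, and the construction makes $\D$ a $(G,X)$-morphism.

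Third, for the holonomy, I would exploit the fact that every deck transformation $\alpha\in\pi_1(M)$ preserves the $(G,X)$-structure on $\widetilde M$. Hence $\D\circ\alpha$ is another developing map; by the uniqueness of $\D$ up to left composition by an element of $G$, there is a unique $\rho(\alpha)\in G$ with $\D\circ\alpha=\rho(\alpha)\circ\D$. The relation $\D\circ(\alpha\beta)=\D\circ\alpha\circ\beta$ combined with analyticity forces $\rho$ to be a group morphism, and $\rho$-equivariance of $\D$ is built in. Finally, replacing $\widetilde\varphi_0$ by $g\circ \widetilde\varphi_0$ replaces $\D$ by $g\circ \D$ and conjugates $\rho$ by $g$, giving the stated uniqueness. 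The main obstacle is the rigorous execution of the analytic continuation, where one must use the analyticity axiom carefully and at every step to ensure the correcting elements of $G$ are canonical; once this is done, simple connectedness of $\widetilde M$ takes care of the global obstruction.
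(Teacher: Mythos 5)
Your argument is the standard Ehresmann--Thurston development construction, and it is correct; note that the paper states this proposition as classical background and gives no proof of its own, so there is nothing to compare against. The only step worth tightening is the one you lean on when defining $\rho(\alpha)$: you need the full uniqueness statement that \emph{any} $(G,X)$-morphism $\widetilde M\rightarrow X$ (in particular $\D\circ\alpha$) coincides with $g\circ\D$ for a unique $g\in G$, which requires an open-and-closed argument on the connected space $\widetilde M$ (the set where the two maps agree after correction by a fixed $g$ is open by construction and closed by analyticity), not merely your observation that changing the initial chart post-composes $\D$ by an element of $G$. With that point made explicit, the proof is complete.
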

	Actually, the analyticity of the $(G,X)$-structure ensure that every $(G,X)$-morphism $\widetilde M\rightarrow X$ is a developping map.
      \subsubsection{Minkowski space}
	\label{sec:H2_E_1_2}
	The only analytical structure we shall deal with is minkowskian. 
	\begin{defi}[Minkowski space] Let $\E^{1,2}=\left(\R^3, q \right)$ be the Minkowski space
	  of dimension 3 where $q$ is the bilinear form $-\d t^2+\d x^2+\d y^2$ and $t,x,y$ denote
	  respectively the carthesian coordinates of $\R^3$. 
	  
	  A non zero vector $u\in\E^{1,2}\setminus \{0\}$ is spacelike, lightlike or timelike whether 
	  $q(u,u)$ is positive, zero or negative. A vector is causal if it is timelike
	  or lightlike. The set of non zero causal vectors is the union of two convex cones, the one in which 
	  $t$ is  positive is the future causal cone
	  and the other is the past causal cone.
  
	  A continuous piecewise differentiable curve in $\E^{1,2}$ is future causal (resp. chronological) if at all points its tangent vectors are future causal (resp. timelike).
	  The causal (resp. chronological) future of a point $p\in \E^{1,2}$ is the set of points $q$ such that there exists a future causal (resp. chronological)
	  curve from $p$ to $q$ ; it is written $J^+(p)$ (resp. $I^+(p)$).
	\end{defi}
  
	Consider a point $p \in \E^{1,2}$, we have
	\begin{itemize}
	   \item $I^+(p)=\{q\in \E^{1,2}~|~ q-p \text{ future timelike}\}$
	   \item $J^+(p)=\{q\in \E^{1,2}~|~ q-p \text{ future causal or zero}\}$
	\end{itemize}

	The causality defines two order relations on $\E^{1,2}$, the causal relation $<$ and the chronological relation $\ll$. 
	More precisely, $x<y$ iff $y\in J^+(x)\setminus \{x\}$ and $x\ll y $ iff $y\in I^+(x)$. 
	One can then give the most general definition of causal curve. 
	A causal (resp. chronological) curve is a continous curve in $\E^{1,2}$  increasing for the causal (resp. chronological) order.
	A causal (resp. chronological) curve is {\it inextendible } if every causal (resp. chronological)
	curve containing it is equal. The causal order relation is often called a {\it causal orientation}.

	\begin{prop} The group $\isom(\E^{1,2})$ of affine isometries of $\E^{1,2}$ preserving the orientation and preserving the causal order 
	  is the identity component of the group of affine isometries of $\E^{1,2}$. Its linear part $\SO_0(1,2)$ is the identity component of $\SO(1,2)$.	   
	\end{prop}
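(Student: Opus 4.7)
The plan is to reduce the statement to the structure of $O(1,2)$ and then to identify the two conditions of the proposition (preserving orientation and preserving causal order) as the invariants cutting out its identity component. First I would write the full group of affine isometries as the semidirect product $\R^3\rtimes O(1,2)$; since $\R^3$ is path-connected, its connected components correspond bijectively to those of $O(1,2)$, and in particular the identity component is $\R^3\rtimes O(1,2)^0$. It therefore suffices to show that the subgroup of linear isometries preserving both the orientation and the future causal cone is precisely $O(1,2)^0$, which will turn out to be $\SO_0(1,2)$.

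Next I would translate the two geometric conditions into invariants of the linear part $A$ of an affine isometry $f$. Preservation of orientation is equivalent to $\det A=+1$, that is $A\in \SO(1,2)$. Preservation of the causal order $<$ is equivalent to $A$ preserving the future causal cone: on one hand, for any future causal vector $u$ one has $p<p+u$, so $f(p)<f(p+u)=f(p)+A(u)$ forces $A(u)$ to be future causal; on the other hand, from the explicit description $J^+(p)=p+\{\text{future causal vectors or }0\}$ recalled just above, if $A$ preserves the future cone then $x<y$ implies $A(y-x)$ is future causal nonzero, whence $f(x)<f(y)$. These two $\Z/2$-valued invariants being logically independent, $O(1,2)$ has at most four connected components and the affine isometries satisfying both conditions form exactly the subgroup $\R^3\rtimes \SO_0(1,2)$, where $\SO_0(1,2)$ denotes the subgroup of $\SO(1,2)$ preserving the future cone.

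The only non-formal step is to prove that $\SO_0(1,2)$ is connected, whence it equals the identity component of $\SO(1,2)$ and of $O(1,2)$. For this I would exploit the transitive action of $\SO_0(1,2)$ on the hyperboloid $\H^2=\{-t^2+x^2+y^2=-1,\,t>0\}$ already introduced in the paper; the stabiliser of $(1,0,0)$ is the subgroup $\SO(2)$ of rotations of the spatial plane $\{t=0\}$. The resulting principal bundle $\SO(2)\to \SO_0(1,2)\to \H^2$ has connected base and connected fibre, so the total space is connected. This fibration argument is the one genuinely non-trivial point; the rest is routine bookkeeping in the semidirect product.
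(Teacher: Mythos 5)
The paper states this proposition without proof, treating it as a standard fact about the Lorentz group, so there is no argument of the author's to compare yours against. Your proof is correct and is the standard one: reducing to the linear part via the splitting $\R^3\rtimes O(1,2)$, identifying orientation-preservation with $\det A=+1$ and order-preservation with preservation of the future cone, and then establishing connectedness of $\SO_0(1,2)$ via the transitive action on $\H^2$ with connected stabiliser $\SO(2)$. The translation between preserving the causal order and preserving the future cone correctly uses the description $J^+(p)=p+\{\text{future causal vectors or }0\}$ recalled in the paper, and the fibration step is the genuinely non-formal part, as you say. One small logical re-ordering: the claim that $O(1,2)$ has \emph{at most} four components does not follow merely from the two invariants being independent --- it requires the connectedness of their common kernel $\SO_0(1,2)$, which you only establish afterwards. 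Since nothing in the intervening sentence actually uses that count (the identification of the subgroup cut out by the two conditions with $\R^3\rtimes\SO_0(1,2)$ is just its definition), this is a presentational slip rather than a gap; the clean order is to observe that the two conditions cut out a clopen subgroup containing the identity, hence containing the identity component, and then to prove that subgroup connected.
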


	A linear isometry either is the identity or possesses exactly one fixed direction.
	It is elliptic (resp. parabolic, resp. hyperbolic) if its line of fixed points is timelike (resp.  
	lightlike, resp. spacelike). Any $(\isom(\E^{1,2}),\E^{1,2})$-manifold, 
	is naturally causally oriented.
	
	Since there are no ambiguity on the group, we will refer to $\E^{1,2}$-manifold
      \subsubsection{Globally hyperbolic regular spacetimes} \label{sec:class_theo}
	A characterisation of $\E^{1,2}$-manifolds, to be reasonable, needs some assumptions. 
	
	\begin{defi} A subset $P\subset M$ of a spacetime $M$ is acausal if 
	any causal curve intersects $P$ at most once.	 
	\end{defi}

	\begin{defi}[Globally hyperbolic $\E^{1,2}$-structure] A $\E^{1,2}$-manifold $M $
	  is globally hyperbolic if  there exists a topological surface $\Sigma$ in $M$ 
	  such that every inextendible causal curve in $M$ 
	  intersects $\Sigma$ exactly once. In particular $\Sigma$ is acausal.
	  Such a surface is called a Cauchy-surface.

	\end{defi}
	\begin{defi}[Cauchy-embedding]
	  A Cauchy-embedding $f:M\rightarrow N$ between two globally hyperbolic manifolds is
	   an isometric embedding sending a Cauchy-surface (hence every) on a Cauchy-surface.
	   
	   We say that $N$ is a Cauchy-extension of $M$.
	\end{defi}
	
	A piecewise smooth surface is called {\it spacelike} if every tangent vector is spacelike. 
	Such a surface is endowed with a metric space structure 
	induced by the ambiant $\E^{1,2}$-structure. 
	If this  metric space is metrically complete, the surface is said complete. 
	\begin{defi}
	  A spacetime admitting a metrically complete piecewise smooth and spacelike Cauchy-surface is called  Cauchy-complete. 
	\end{defi}

	There is a confusion not to make between Cauchy-complete in this meaning and "metrically complete" which is sometimes refered to 
	by  "Cauchy complete" : here, the spacetime is not even a metric space.
	
	Geroch and Choquet-Bruhat \cite{MR0250640} proved the existence and uniqueness 
	of the maximal Cauchy-extension of globally hyperbolic Lorentz manifolds satisfying
	certain Einstein equations (see \cite{ringstrom} for a more modern approach). Our special case correspond to vacuum solutions of Einstein equations. 
	There thus exists a unique maximal Cauchy-extension of a given spacetime.

	Mess \cite{mess} and then Bonsante, Benedetti, Barbot and others \cite{barbot_globally_2004}, \cite{andersson:hal-00642328}, 
	constructed a characterisation of maximal Cauchy-complete globally hyperbolic $\E^{1,n}$-manifolds for all $n\in\N^*$. This caracterisation 
	is based on the holonomy. We are only concerned in the $n=2$ case.

    \subsection{Massive particle and BTZ white-hole}
	\label{sec:mass}
      \subsubsection{Definition and causality}
	\label{sec:mass_def}
	
	Lorentzian analogue in dimension 3 of conical singularities have been classified in \cite{Particules_1}. We are only interested 
	in two specific types we describe below : massive particles and BTZ lines.
	Massive particles are the most direct Lorentzian analogues of conical singularities. 
	A Euclidean conical singularity can be constructed by quotienting the Euclidean plane by a finite rotation group. 
	The conical angle is then $2\pi/k$ for some $k\in \N^*$. 
	The same way, one can construct examples of massive particles by quotienting $\E^{1,2}$ by some finite 
	group of elliptic isometries. 

	The general definitions are as follow. Take the universal covering of the complement of a point in the Euclidean plane. 
	It is isometric to $\E_\infty^2:=(\R_+^*\times \R, \d r^2+r^2\d \theta^2)$. 
	The translation $(r,\theta)\mapsto (r,\theta+\theta_0)$ are isometries, 
	one can then quotient out $\E^{2}_\infty$ by some discrete 
	translation group $\theta_0\Z$. The quotient is an annulus $\R_+^*\times \R/2\pi\Z$ with the metric $\d r^2+\theta_0 r^2 \d \theta^2$ which 
	can be completed by adding one point. The completion is then homeomorphic to $\R^2$ but
	the total angle around the origin is $\theta_0$ instead of $2\pi$. Define the model of 
	a massive particle of angle $\alpha$ by the product of a conical 
	singularity of conical angle $\alpha$ by $(\R,-\d t^2)$. 

	\begin{defi}[Conical singularity]
	    Let  $\alpha\in \R_+^*$. The singular plane of conical angle $\alpha$, written $\E^{2}_\alpha$, 
	    is $\R^2$ equipped
	    with the metric expressed in polar coordinates
	    $$\d r^2 + \frac{\alpha}{2\pi} r^2 \d \theta ^2.$$ 
	\end{defi}
	The metric is well defined and flat everywhere but at $0$ which is a singular point. 
	The space can be seen as the metric completion of the complement 
	of the singular point. The name comes from the fact the metric
	of a cone in $\E^3$ can be written this way in a suitable coordinate system. While Euclidean cones have a conical angle
	less than $2\pi$, a spacelike revolution cone of timelike axis in Minkowski space 
	is isometric to $\E^{2}_\alpha$ with $\alpha$ greater that $2\pi$. 
	We insist on the fact that the parameter $\alpha$ is an arbitrary positive real number. 
	
	\begin{defi}[Massive particles model spaces] \label{def:mass_part}
	Let $\alpha $ be a positive real number. We define : 
	$$\E^{1,2}_\alpha := (\R\times \E^2_\alpha,  \d s^2) 
	\text{ with } \d s ^2 = -\d t^2+\d r^2+\frac{\alpha}{2\pi} r^2 \d \theta^2$$
	where $t$  is the first coordinate of the product and $(r,\theta)$ the polar coordinates of $\R^2$ 
	(in particular $\theta\in \R/2\pi \Z$).
	
	The complement of the singular line $\sing(\E^{1,2}_\alpha):= \{r=0\}$  is a spacetime called 
	the regular locus and denoted by $Reg(\E^{1,2}_\alpha)$.	
	For $p\in \sing(\E^{1,2}_\alpha)$, we write $]p,+\infty[$ (resp. $[p,+\infty[$)
	for the open (resp. closed)  future singular ray from $p$. We will also use analogue notation of the past singular ray from $p$.
	\end{defi}

	  Start from a massive particle model space of angle $\alpha\leq 2\pi$, write 
	  $\alpha=\frac{2\pi}{\cosh(\beta)}$ and use the coordinates 
	  given in definition \ref{def:mass_part}. Consider the following change of coordinate.
	    $$\left\{\begin{array}{lcl} {\tau} & = & t
	    \cosh\left(\beta\right) - r \sinh\left(\beta\right) \\ \mathfrak{r} & =
	    & \displaystyle\frac{r}{\cosh\left(\beta\right)} \\ {\theta'} & = &
	    \theta  \end{array}\right.$$
	In the new coordinates, writing $\omega=\tanh(\beta)$, the metric is
	    $$\d s_\omega^2 = \mathfrak{r}^{2} \mathrm{d} \theta^2+\mathrm{d} \mathfrak{r}^2 -(1-\omega^2)\d \tau^2-2\omega\d \mathfrak{r}\d\tau.$$
	    
	Varying $\omega$ in $]-1,1[$, we obtain a continuous 1-parameter family of metrics on 
	$\R^3$ which parametrises all massive particles of angle less than $2\pi$. 
	 The metrics have  limits when $\omega$ tends toward $\omega=-1$ or $\omega=1$.
	The limit metric is non-degenerated, 
	Lorentzian and flat everywhere but on the singular line $\mathfrak r=r=0$. Again the surfaces $\tau=Cte$ are non singular despite the ambiant space is.
	Since the coordinate system of massive particles will not play an important role hereafter, 
	with a slight abuse of notation, we use $r$ instead of $\mathfrak{r}$ coordinate.

	  \begin{defi}[BTZ white-hole model space] \label{def:BTZ}The BTZ white-hole model space, noted $\E^{1,2}_0$, is $\R^3$ equipped with the metric
	    $$ \d s^2 = -2\d \tau \d r + \d r^2+ r^2\d \theta^2  $$
	    where $(\tau,r,\theta)$ are the cylindrical coordinates of $\R^3$. 
	    The singular line $\sing(\BTZ):=\{r=0\}$ is the BTZ line and its complement is 
	    the regular locus $Reg(\BTZ)$ of $\E^{1,2}_0$. 	    
	    	For $p\in \sing(\E^{1,2}_0)$, we write $]p,+\infty[$ (resp. $[p,+\infty[$)
	      for the open (resp. closed)  future singular ray from $p$. We will also use analogue notation of the past singular ray from $p$.

	  \end{defi}
	 \begin{rem}[View points on the Singular line]\  \label{rem:foliation}
	 \begin{itemize}
	  \item 
	    For $\alpha \in \R_+$, notice that the surfaces $\{\tau=\tau_0\}$ are isometric to the Euclidean plane but are not 
	    totally geodesic. These surfaces give a foliation of $\E^{1,2}_\alpha$ by surfaces isometric to $\E^2$, which is  
	    in particular non-singular. 
	  \item 
	    The ambiant Lorentzian space is  singular since the metric 2-tensor of $Reg(\E^{1,2}_\alpha)$ does not 
	    extend continuously to $\E^{1,2}_\alpha$. 
	  Though, as long as $c \in \C^1_{pw}$ then $s\mapsto \d s^2 (c'(s))$ is   piecewise continuous. It can thus be integrated 
	  along a curve. 
	 \end{itemize}

	 \end{rem}

	  Let $\alpha\in \R_+$.
	  The causal curves are well defined on the regular locus of $\E^{1,2}_\alpha$. The singular line 
	  is itself timelike if 
	  $\alpha>0$ and lightlike if $\alpha=0$. We have to define an orientation on the singular line 
	  to define a time orientation on the whole $\E^{1,2}_\alpha$. All the causal curves in $Reg(\E^{1,2}_\alpha)$ share the property that the 
	  $\tau$ coordinate is monotonic, then we orientate $\sing(\E^{1,2}_\alpha)$ as follows.
	  Curves can be decomposed into a union of pieces of $\sing(\E^{1,2}_\alpha)$ and of curves in the regular locus 
	  potentially with ending points on the singular line. Such a curve is causal (resp.  chronological) if each part is causal and 
	  if the $\tau$ coordinate is increasing. The causal future of a point $p$, noted $J^+(p)$ is then defined as the set of points $q$ such that there exists 
	  a future causal curve from $p$ to $q$. Causal/chronological future/past are defined the same way.

	  \begin{defi}[Diamonds]
	  Let $\alpha \in \R_+$ and let $p,q$ be two points in $\E^{1,2}_\alpha$. 
	  Define the closed diamond from $p$ to $q$ :  $$\overline{\Diamond_p^q}=J^+(p)\cap J^-(q)$$
	  and the open diamond from $p$ to $q$ : $${\Diamond_p^q}=Int(J^+(p)\cap J^-(q))$$
	  \end{defi}
	
	  Notice that $\Diamond_p^q=I^+(p)\cap I^-(q)$ if $\alpha>0$. However, if $\alpha=0$ and $p$ is on the singular line then 
	  $I^+(p)=Int(J^+(p))\setminus ]p,+\infty[$, therefore	  $I^+(p)\cap I^-(q) = \Diamond_p^q\setminus ]p,+\infty[$.
	  
	  The next proposition justifies the name of BTZ white-hole.
	  \begin{lem}   \label{lem:BTZ_causal_curve} Let $c=(\tau,r,\theta) \in \C^0(]0,1[,\BTZ)$. 
	  \begin{enumerate}[(i)]
	    \item If $c$ is future causal (resp. timelike), then $r$  is increasing (resp. strictly increasing).
	    \item If $c$ is causal future, then $c$ can be decomposed uniquely into $$c=\Delta\cup c^{0}$$ where 
	    $c^0=c\cap Reg(\E^{1,2}_0)$ and $\Delta\subset \sing(\E^{1,2}_0)$ are both connected (possibly empty).
	    Furthermore, $\Delta$   lies in the past of $c^0$.
	  \end{enumerate}
	  \end{lem}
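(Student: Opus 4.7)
The plan is to reduce statement (i) to an algebraic inequality on tangent vectors in the regular locus, and to derive statement (ii) as a direct consequence of the monotonicity obtained in (i).

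For (i), I would first invoke the decomposition recalled in the paragraph defining causality on $\BTZ$: a future causal curve splits into sub-arcs lying in $\sing(\BTZ)$, on which $r\equiv 0$ and hence is constant, and piecewise $C^1$ causal sub-arcs in $Reg(\BTZ)$ where the tangent-vector criterion applies. On such a regular sub-arc with tangent $(\dot\tau,\dot r,\dot\theta)$, the given metric yields
\[
\d s^2(\dot c,\dot c)=\dot r^2-2\dot\tau\dot r+r^2\dot\theta^2,
\]
so the causal condition rewrites as $\dot r\,(2\dot\tau-\dot r)\geq r^2\dot\theta^2\geq 0$. Since the right-hand side is non-negative, $\dot r$ and $2\dot\tau-\dot r$ must share a common sign; the future orientation forces $\dot\tau\geq 0$, which rules out $\dot r<0$ (it would imply $\dot\tau\leq\dot r/2<0$). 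Hence $\dot r\geq 0$, and strictness in the timelike case follows from noting that $\dot r=0$ forces $\dot c\parallel \partial_\tau$, a vector whose causal norm vanishes. Concatenating regular and singular sub-arcs yields (i).

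For (ii), by (i) the function $r\circ c:]0,1[\to\R_+$ is continuous and non-decreasing, so I would set $\Delta:=(r\circ c)^{-1}(\{0\})=c\cap\sing(\BTZ)$ and $c^{0}:=(r\circ c)^{-1}(]0,+\infty[)=c\cap Reg(\BTZ)$. These equalities simultaneously establish existence and uniqueness of the decomposition. Monotonicity and continuity of $r\circ c$ force $\Delta$ to be an initial (possibly empty) closed sub-interval of $]0,1[$ and $c^{0}$ to be a terminal (possibly empty) open sub-interval, so both are connected. Finally, for any $s\in\Delta$ and $s'\in c^{0}$ one has $s<s'$, and the causality of $c$ gives $c(s)\leq c(s')$ in the causal order of $\BTZ$, which is exactly the statement that $\Delta$ lies in the causal past of $c^{0}$.

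I expect the main technical care to lie not in any computation but in the back-and-forth between the order-theoretic definition of causal curves and the tangent-vector criterion: the former is what the statement operates on, whereas the latter is required for the differential inequality on $\dot r$. This gap is bridged precisely by the piecewise decomposition used to define causality in $\BTZ$, which confines the differential argument to the regular locus where standard flat Lorentzian calculus applies; everything else then reduces to elementary properties of continuous monotone functions.
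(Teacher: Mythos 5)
Your argument is correct, and in fact the paper states Lemma \ref{lem:BTZ_causal_curve} without proof, so there is no official proof to match; the nearest point of comparison is the proof of the generalisation, Lemma \ref{lem:past_BTZ}. Your computation is right: the causal condition $-2\dot\tau\dot r+\dot r^2+r^2\dot\theta^2\le 0$ rewrites as $\dot r\,(2\dot\tau-\dot r)\ge r^2\dot\theta^2\ge 0$, and future orientation ($\tau$ increasing) excludes $\dot r<0$; the timelike case excludes $\dot r=0$ since $\dot r=0$ forces $\dot\theta=0$ and leaves a null vector proportional to $\partial_\tau$. Part (ii) then falls out of monotonicity and continuity of $r\circ c$ exactly as you say. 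Your route differs from the paper's treatment of the analogous Lemma \ref{lem:past_BTZ}: there the author avoids any coordinate computation and instead shows, by an open-closed connectedness argument on $I=\{s\,:\,c([0,s])\subset\Delta\}$, that the causal past of a BTZ point stays on the singular line; that local argument is what generalises to arbitrary $\E^{1,2}_A$-manifolds, whereas your global use of the $r$-coordinate is specific to the model space but gives the stronger quantitative statement (i), which the local argument does not. The one point to tighten: since $c$ is merely $\C^0$ and causality is defined order-theoretically, the differential inequality does not apply to $c$ itself; the correct bridge is not the singular/regular decomposition you invoke but the fact that the causal relation $<$ on $Reg(\BTZ)$ is generated by piecewise differentiable causal curves, so that $p<q$ yields such a curve from $p$ to $q$ along which your inequality gives $r(p)\le r(q)$, and monotonicity of $r\circ c$ for a continuous causal $c$ follows. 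This is a presentational imprecision rather than a gap.
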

	 
 	 Remark that we chose the limit $\omega \rightarrow 1$ to define BTZ white-hole. The limit $\omega\rightarrow -1$
 	 is also meaningful. Its regular part is isometric to $Reg(\BTZ)$ as a Lorentzian manifold but the time orientation is reversed. This limit
 	 is called BTZ black-hole. We will not make use of them even though one could extend the results presented here to
 	 include BTZ black-holes.
	 
	 Useful neighborhoods of singular points in $\E^{1,2}_\alpha$ are as follows. Take some $\alpha\in\R_+$ and 
	 consider the cylindrical coordinates used in the definition of $\E^{1,2}_\alpha$. A tube or radius $R$ is a set of the form 
	 $\{r<R\}$, a compact slice of tube is then of the form $\{r\leq R, a\leq \tau\leq b\}$ for $\alpha=0$ or 
	 $\{r\leq R, a\leq t\leq b\}$ for $\alpha>0$. The abuse of notation between $r$ and $\mathfrak{r}$ may induce 
	 an imprecision on the radius which may be 
	 $R$ or $R/\cosh(\beta)$.  However, the actual value of $R$ being non relevant, this imprecision is harmless.
	 More generally an open tubular neighborhood is of the form $\{r<f(\tau), a<\tau<b\}$ where $a$ and $b$ may be infinite.

	  \begin{figure}[h]	   

	    \includegraphics[width=6cm]{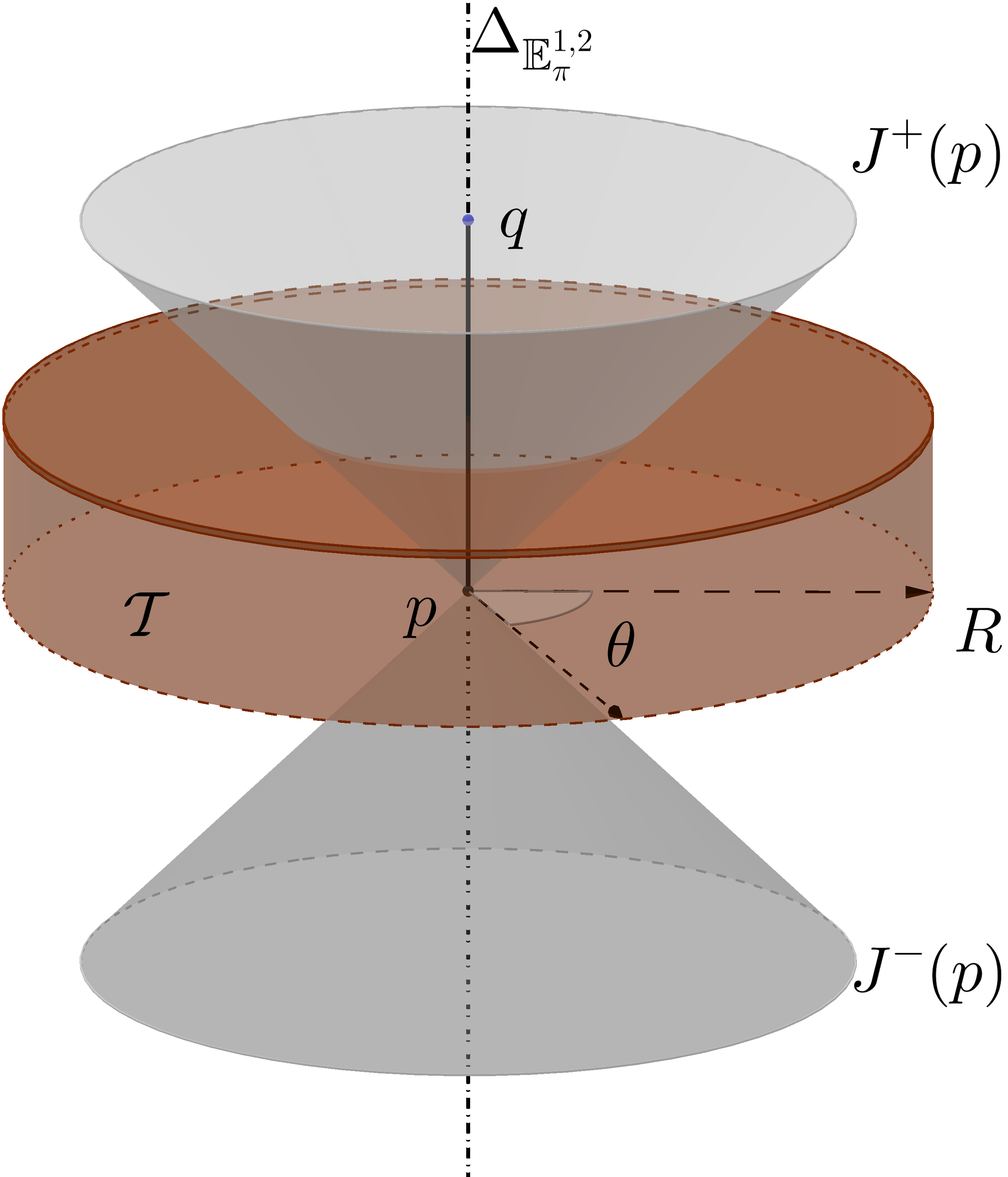} \quad
	    \includegraphics[width=8cm]{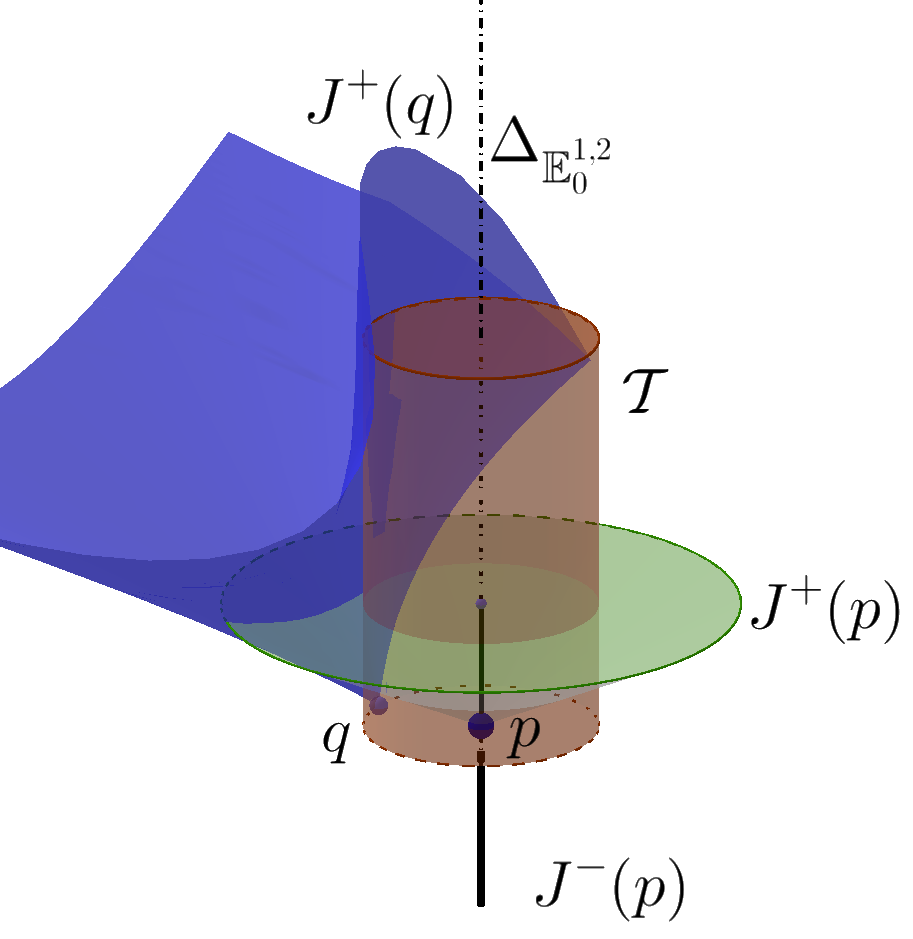}
	     \begin{caption}{Causal cones in model spaces.}\label{figure:causality} On the left is represented the model space $\E^{1,2}_\pi$.
		      The vertical dotted line is the singular line
		      $\Delta_{\E^{1,2}_\pi}$. On this line are represented two
		      singular points $p$ and $q$ together with causal future and causal 
		      past of $p$ in grey. The segment $[p,q]$ is outlined. 
		      A tube $\mathcal T$ of radius $R$ is represented in brown
		      The angular coordinate is represented by $\theta$. On the right 
		      is represented the model space $\E^{1,2}_0$
		      with the vertical dotted line as the singular line $\sing(\E^{1,2}_0)$. 
		      A singular point $p$ and a regular point $q$ are represented with their causal future.
		      The causal future of $p$ is in green. We have depicted the tube $\T$ containing $q$ in his boundary. 
		      The blue surface is the union of future lightlike geodesics starting from $q$. 
		      It does not enter the
		      tube $\T$. The causal past of $p$ is 
		      the black ray below $p$. The other part of the singular line is the dotted ray above $p$;
		      it is the complement of $I^+(p)$ in the interior of $J^+(p)$.
	\end{caption}
	  \end{figure}

   	\subsubsection{Universal covering and developping map}  \label{subsubsec:isometries}
	  Let $\alpha$ be a non-negative real number.
	  
	  If $\alpha>0$, the universal covering of $Reg(\E^{1,2}_\alpha)$ 
	  can be naturally identified with $$(\R\times \R^*_+\times \R, -\d t^2+\d r^2+r^2\d \theta^2). $$ 
	  \begin{defi} Define 	  
	  $$ \E^{1,2}_\infty := \left(\R \times \R_+^*\times \R,-\d t^2+\d r^2+r^2\d \theta^2 \right).$$
	  \end{defi}	  
	  Let $\Delta$ be the vertical timelike line through the origin in $\E^{1,2}$, the group of isometries of $\E^{1,2}$ which sends
	  $\Delta$ to itself is isomorphic to $\SO(2)\times\R$. The $\SO(2)$ factor corresponds to the set of linear isometries of axis $\Delta$ and $\R$ to the
	  translations along $\Delta$. 
	  The group of isometries of $\E^{1,2}_\infty$ is then the universal covering of $\SO(2)\times\R$, namely 
	  $\isom(\E^{1,2}_\infty)\simeq\widetilde{\SO}(2)\times \R$. The regular part  
	  $Reg(\E^{1,2}_\alpha)$ is then the quotient of $\E^{1,2}_\infty$ by the group of isometries generated by 
	  $(\tau,r,\theta)\mapsto (\tau,r,\theta+\alpha)$.
	  We will simply write $\alpha\Z$ for this group. 
	  There is a natural choice of developping map $\D$ for $\E^{1,2}_\infty$ :
	  the projection onto $\E^{1,2}_\infty/2\pi\Z$. Indeed, $\E^{1,2}_\infty / 2\pi\Z$ can be identified with the 
	  complement of $\Delta$ in $\E^{1,2}$.
	  In addition, $\D$ is $\rho$-equivariant with respect 
	  to the actions of $\isom(\E^{1,2}_\infty)$
	  and $\isom(\E^{1,2})$ where $\rho$ is the projection onto $\isom(\E^{1,2}_\infty)/2\pi\Z \subset  \isom(\E^{1,2})$. 
	  The image of $\rho$ is then the group of rotation-translation around the line $\Delta$ with translation parallel to $\Delta$. 
	  This couple $(\D,\rho)$ induces a developping map and an holonomy choice for the regular part of $\E^{1,2}_\alpha$ which is 
	  $(\D,\rho_{|\alpha\Z})$. We get common constructions, developping map and holonomy for every  $Reg(\E^{1,2}_\alpha)$ simultaneously.

	  Assume now $\alpha=0$ and let $\Delta$ be a lightlike line through the origin in $\E^{1,2}$.  
	  Notice that there exists a unique plane of perpendicular to $\Delta$ containing $\Delta$ since the direction of $\Delta$ is lightlike.
	  Let $\Delta^\perp$ be the unique plane containing $\Delta$ and perpendicular to $\Delta$, we have $I^+(\Delta)=I^+(\Delta^\perp)$. 
	  The causal future of $\Delta$ is $J^+(\Delta)=I^+(\Delta)\cup \Delta$. The isometries of $\E^{1,2}$ fixing 
	  $\Delta$ pointwise are parabolic isometries with a translation part in the direction of $\Delta$. 
	  The universal covering $\widetilde{Reg}(\E^{1,2}_0)$ can be identified with $\R\times \R^*_+\times \R$ endowed with the metric 
	  $-2\d \tau \d r +\d r^2+ r^2 \d \theta^2$.
	  
	  \begin{prop} \label{prop:BTZ_fund} 	Let $ \widetilde{Reg}(\BTZ) $ be the universal covering of the regular part of $\BTZ$.
	    \begin{itemize}
	     \item The developping map $\D: \widetilde{Reg}(\BTZ)\rightarrow \E^{1,2}$ is injective;
	     \item the holonomy sends the translation $(t,r,\theta)\mapsto (t,r,\theta+2\pi) $ to some 
	    parabolic isometry $\gamma$ which  pointwise fixes a lightlike line $\Delta$; 
	     \item the image of $\D$ is the chronological future of $\Delta$. 
	    \end{itemize}
 
	   \end{prop}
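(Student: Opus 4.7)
The plan is to produce an explicit developping map and read the holonomy directly off its formula. Fix coordinates $(T,X,Y)$ on $\E^{1,2}$ and the lightlike line $\Delta := \{T = X,\ Y = 0\}$ through the origin; its perpendicular plane is $\Delta^\perp = \{T=X\}$, so by the observation made just before the proposition, $I^+(\Delta) = I^+(\Delta^\perp) = \{T > X\}$.

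First I would define
$$\D: \widetilde{Reg}(\BTZ) \longrightarrow \E^{1,2}, \qquad (\tau, r, \theta) \longmapsto \left(\tau + \tfrac{r\theta^2}{2},\ \tau - r + \tfrac{r\theta^2}{2},\ r\theta\right),$$
and verify by direct substitution that the pullback of $-\d T^2 + \d X^2 + \d Y^2$ equals $-2\,\d\tau\,\d r + \d r^2 + r^2\,\d\theta^2$. A clean way is to note $\d X - \d T = -\d r$ and write $\d X^2 - \d T^2 = (\d X - \d T)(\d X + \d T)$, reducing the check to a one-line cancellation. Hence $\D$ is an isometric immersion, and by analyticity of the $(\isom(\E^{1,2}),\E^{1,2})$-structure together with simple connectedness of $\widetilde{Reg}(\BTZ)$, the fundamental property of $(G,X)$-structures identifies $\D$ as a developping map.

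Second, I would establish injectivity and identify the image simultaneously by inverting the formula. Given $(T,X,Y)\in\E^{1,2}$, the system $r = T - X$, $\theta = Y/(T-X)$, $\tau = T - Y^2/(2(T-X))$ produces a unique preimage in $\R \times \R^*_+ \times \R$ precisely when $T > X$, so $\D$ is a bijection onto $\{T > X\} = I^+(\Delta)$, giving the first and third bullets at once.

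For the holonomy, I would apply $\D$ to $(\tau, r, \theta + 2\pi)$, expand $(\theta + 2\pi)^2$, and re-express everything in terms of $(T,X,Y) = \D(\tau,r,\theta)$. The result is the purely linear map
$$\gamma : (T, X, Y) \longmapsto \bigl(T + 2\pi Y + 2\pi^2(T-X),\ X + 2\pi Y + 2\pi^2(T-X),\ Y + 2\pi(T-X)\bigr).$$
Writing $\gamma = I + 2\pi A$, a short computation gives $\ker A = \R(1,1,0)$ (a lightlike direction), $A^2 \neq 0$, and $A^3 = 0$; hence $\gamma$ is unipotent with a unique fixed direction, so its linear part is parabolic in the sense of Section \ref{sec:H2_E_1_2}, and since $\gamma$ also fixes the origin, it pointwise fixes the whole line $\Delta$. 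The main technical obstacle is the pullback identity for $\D$: it is a direct calculation, but essentially the only place where a sign or coefficient error could derail the argument, since all subsequent claims are formal consequences of the explicit formulas.
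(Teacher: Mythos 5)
Your proposal is correct and follows essentially the same route as the paper: both exhibit the explicit map $(\tau,r,\theta)\mapsto(\tau+\tfrac12 r\theta^2,\ \tau+\tfrac12 r\theta^2-r,\ \pm r\theta)$, check the pullback metric, invert it to get injectivity and the image $I^+(\Delta)=\{T>X\}$, and read the holonomy off the formula (the paper states the verification as ``a direct computation,'' whereas you carry out the inversion and the nilpotency check $A^2\neq 0$, $A^3=0$ explicitly). The only differences are conventional: the sign of the $Y$-coordinate and the $2\pi$-rescaling of $\theta$, neither of which affects any of the three assertions.
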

	  \begin{proof}
	    Parametrize $\widetilde{Reg}(\BTZ)$ by $\left( \R\times \R_+^*\times \R, -2 \d \tau \d r + \d r^2 + r^2 \d \theta^2\right)$.
	    The fundamental group of $Reg(\BTZ)$  is generated by the translation $g : (\tau,r,\theta) \mapsto (\tau,r,\theta+2\pi)$. 
	    We use the carthesian coordinates of $\E^{1,2}$ in which the metric is $-\d t^2+\d x^2 + \d y^2$. 

	  Let  $\Delta= \R\cdot (1,1,0)$, let $\gamma$ be the linear parabolic isometry fixing $\Delta$ and sending $(0,0,1)$ on $(1,1,1)$. 
	  Then define 
	  $$\fonction{\D}{\widetilde{Reg}(\BTZ)}{\E^{1,2} }{\left(\tau,r,\frac{\theta}{2\pi}\right)}{ 
	   \begin{pmatrix}t \\ x \\ y\end{pmatrix}=\begin{pmatrix} \tau + \frac{1}{2}r\theta^2 \\ \tau + \frac{1}{2}r\theta^2 -r \\ -r\theta  \end{pmatrix}
	  }.$$ 
	  A direct computation shows that $\D$ is injective of image $I^+(\Delta)$ and one can see that 
	    \begin{enumerate}[(i)]
	  \item $\D$ is a $\E^{1,2}$-morphism ;
	  \item $\D(g\cdot (\tau,r,\theta)) = \gamma\D(\tau,r,\theta)$. 
	  \end{enumerate}  
	  From $(i)$, $\D$ is a developping map and from $(ii)$ the associated holonomy representation sends the translation $g$ to $\gamma$. 
	  
	  \end{proof}
	  \begin{cor}
	   	
	   	We obtain a homeomorphism $\overline {\D} : Reg(\E^{1,2}_0)\rightarrow I^+(\Delta)/\langle \gamma \rangle$ 
	  
	  \end{cor}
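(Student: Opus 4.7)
The plan is to deduce the corollary directly from Proposition \ref{prop:BTZ_fund} by descending $\D$ to the quotient. Since $\D : \widetilde{Reg}(\BTZ) \to \E^{1,2}$ is a $\E^{1,2}$-morphism that is injective with image $I^+(\Delta)$, and since both source and target are 3-manifolds, $\D$ is an open continuous injection, hence a homeomorphism onto $I^+(\Delta)$.

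Next I would transfer the deck-transformation action. The fundamental group of $Reg(\BTZ)$ is generated by $g : (\tau,r,\theta) \mapsto (\tau,r,\theta+2\pi)$, acting freely and properly discontinuously on $\widetilde{Reg}(\BTZ)$ with quotient $Reg(\BTZ)$. The equivariance relation $\D\circ g = \gamma\circ \D$ established in the proof of Proposition \ref{prop:BTZ_fund}, combined with the injectivity of $\D$, transports this action to a free and properly discontinuous action of $\langle \gamma\rangle$ on $I^+(\Delta)$. Hence $I^+(\Delta)/\langle\gamma\rangle$ is a Hausdorff manifold and the projection is a covering map.

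Finally, $\D$ descends to a well-defined map $\overline{\D} : Reg(\BTZ) \to I^+(\Delta)/\langle \gamma\rangle$, which is bijective because $\D$ is a bijection $\widetilde{Reg}(\BTZ)\to I^+(\Delta)$ conjugating the two group actions. Since both quotient maps are local homeomorphisms and $\D$ is a homeomorphism, $\overline{\D}$ is a homeomorphism (both it and its inverse can be read off locally from $\D$ and $\D^{-1}$ through the covering projections).

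The only delicate point is checking that $\langle\gamma\rangle$ acts freely and properly discontinuously on $I^+(\Delta)$, but this is immediate from the equivariance and the fact that $\langle g\rangle$ does so on $\widetilde{Reg}(\BTZ)$: any identification or accumulation under $\langle\gamma\rangle$ in $I^+(\Delta)$ would pull back via the bijection $\D$ to the same phenomenon for $\langle g\rangle$, a contradiction. No genuine calculation is required beyond the ones already performed in the proof of Proposition \ref{prop:BTZ_fund}.
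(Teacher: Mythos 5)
Your argument is correct and is precisely the standard descent-to-quotient reasoning that the paper leaves implicit: the corollary is stated without proof as an immediate consequence of Proposition \ref{prop:BTZ_fund}, relying exactly on the injectivity of $\D$, its image being $I^+(\Delta)$, and the equivariance $\D\circ g=\gamma\circ\D$. Your write-up simply makes explicit the transfer of the free, properly discontinuous action and the bijectivity of the induced map, so there is nothing to object to.
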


	  \begin{rem}\label{rem:BTZ_angle}
	  Let  $\D$ be a developping map of $Reg(\BTZ)$, let $\gamma$ be a generator of the image of the holonomy associated to $\D$.
	  Let $\lambda\in \R^*_+$ and let $h$ be a linear hyperbolic isometry of $\E^{1,2}$ which eigenspace associated to $\lambda$ is the line
	  of fixed points of $\gamma$. The hyperbolic isometry $h$ defines an isometry $Reg(\BTZ)\rightarrow Reg(\BTZ)$.
	  
	  The pullback metric by $h$ is  $$\d s^2_{\lambda}=-2\d \tau \d r + \d r^2+\lambda^2 r^2 \d \theta^2.$$
	  Therefore, the above metric on $\R^3$ is isometric to $\BTZ$ for every $\lambda>0$.
	  
	  \end{rem}
	  \begin{rem}This allows to generalizes remark \ref{rem:foliation} above. For all $\lambda > 0$, the coordinates of $\BTZ$ above 
	  induce a foliation $\{\tau=\tau_0\}$ for $\tau_0\in \R$. Each leaf is isometric to $\E^{2}_\lambda$.
	  \end{rem}

      \begin{figure}
    
           \includegraphics[width=7cm]{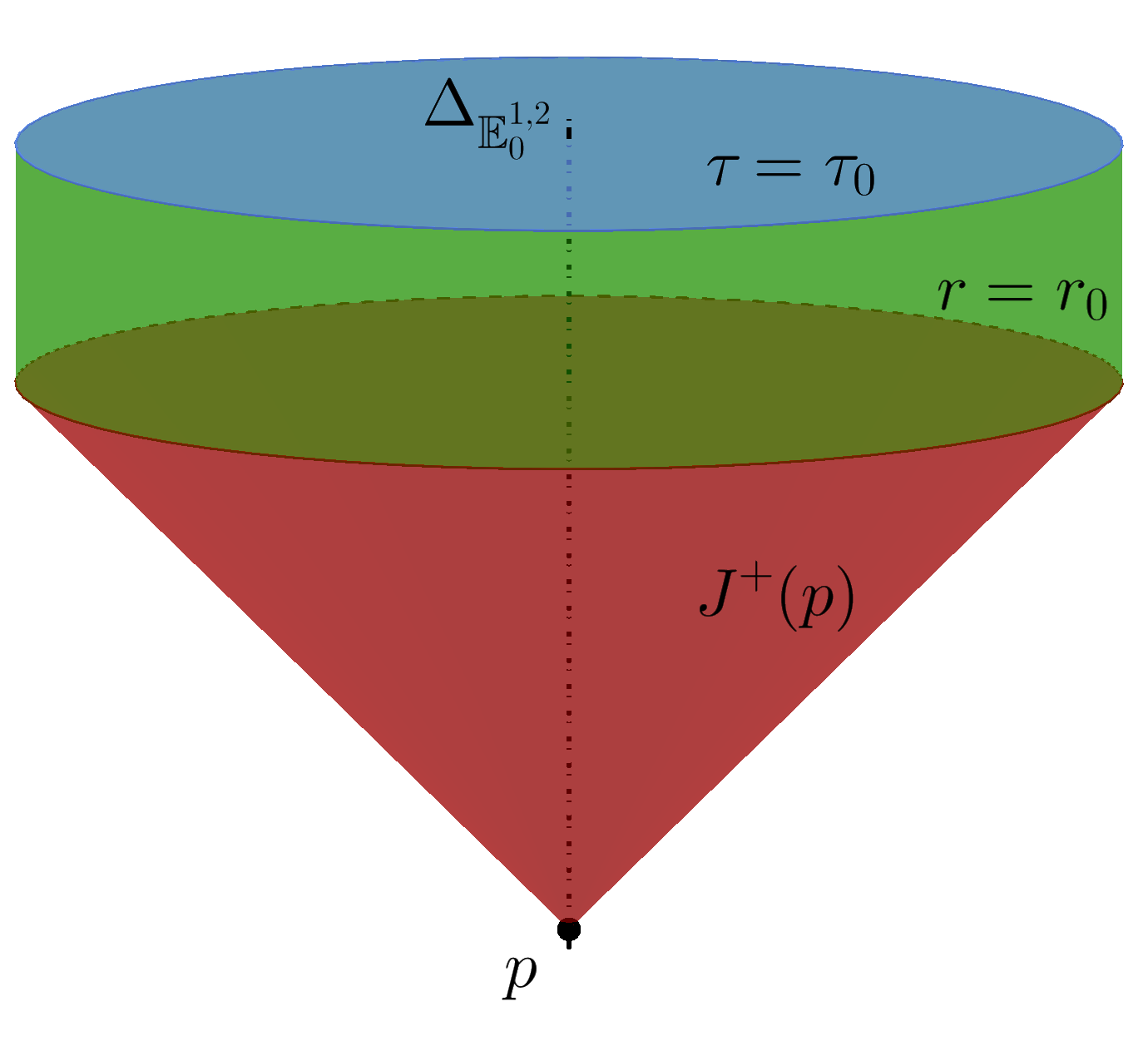}
        \includegraphics[width=8cm]{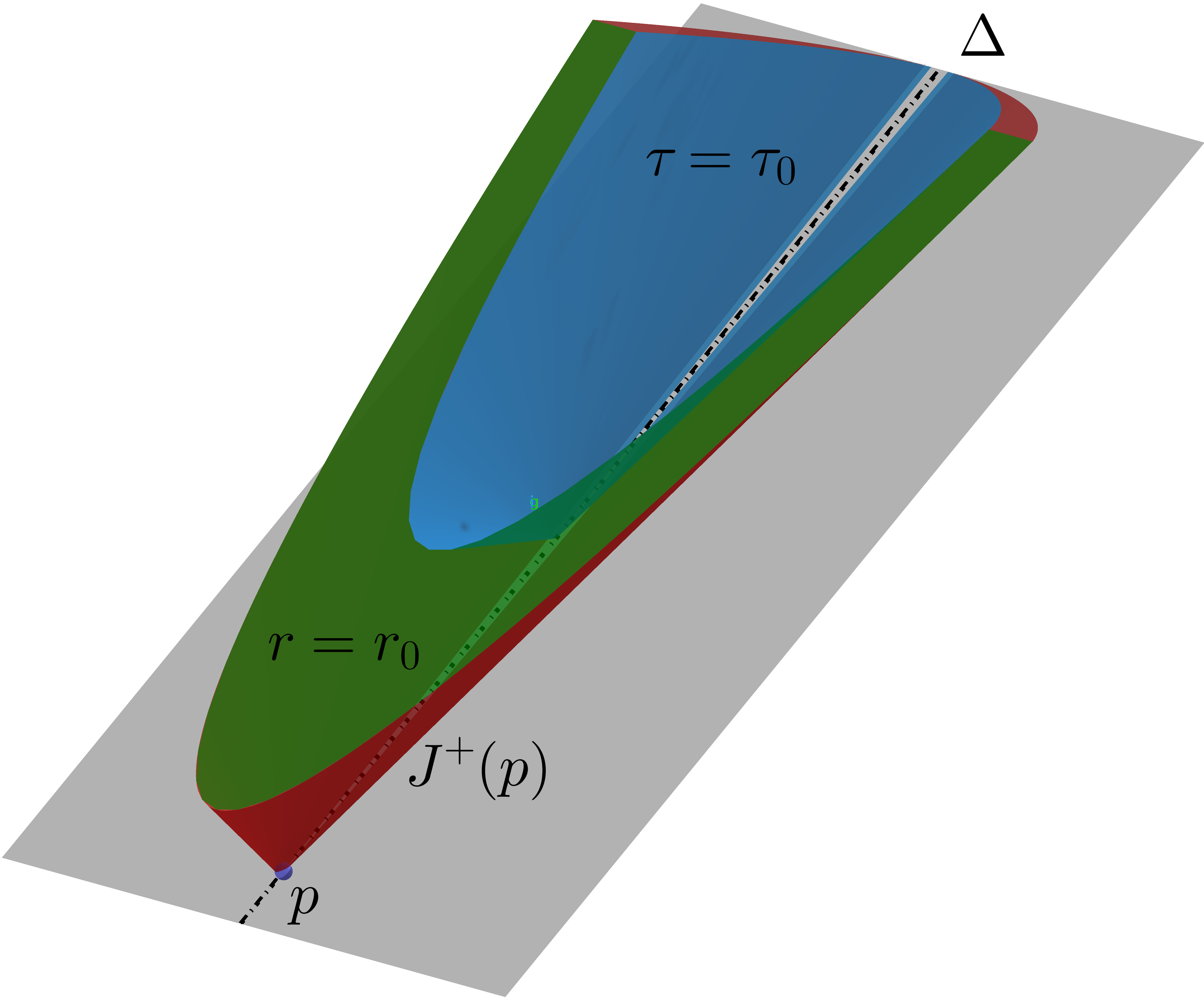}
        \begin{caption}{Tubular neighborhood of a BTZ point and its development} On the left, a tubular subset of $\E^{1,2}_0$. On the right its development into $\E^{1,2}$. Colors are associated 
        to remarkable sub-surfaces and their developments.\label{fig:neighborhood_BTZ}
        \end{caption}

    \end{figure}

	\subsubsection{Rigidity of morphisms between model spaces}  
	  \label{subsubsec:local_rigidity}
	  The next proposition is a rigidity property. A relatively compact subset of $\E^{1,2}$ embeds in every 
	  $\E^{1,2}_\alpha$, however we prove below that the regular part of a neighborhood of a singular point in $\E^{1,2}_\alpha$ ($\alpha\neq 2\pi$)
	  cannot be embedded in any other $\E^{1,2}_\beta$. Furthermore, the embedding has to be the restriction of a global isometry of $\E^{1,2}_\alpha$. 
	  This proposition is central to the definition of singular spacetime. 
	  
	  \begin{prop} \label{prop:isom}Let $\alpha,\beta\in \R_+$  with $\alpha\neq 2\pi$, and let $\U$ be an open connected
	  subset of $\E^{1,2}_\alpha$ containing a singular point
	  and let $\phi$ be a continuous function  $\U\rightarrow \E^{1,2}_\beta$. 
	  
	  If the restriction of $\phi$ to the regular part is an injective $\E^{1,2}$-morphism then $\alpha=\beta$ and  
	  $\phi$ is the restriction of an element of $\isom(\E^{1,2}_\alpha)$. 
	  \end{prop}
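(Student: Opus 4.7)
The argument is entirely local around a singular point. I would fix $p \in \U \cap \sing(\E^{1,2}_\alpha)$ and an open tubular neighborhood $V \subset \U$ of $p$ small enough that $V \cap Reg(\E^{1,2}_\alpha)$ is connected with $\pi_1 \cong \Z$ generated by a meridian loop around the singular line. Its universal cover $\tilde V$ sits inside $\E^{1,2}_\infty$ when $\alpha > 0$ and inside $\widetilde{Reg}(\BTZ)$ when $\alpha = 0$; I write $T_\alpha$ for the deck generator and $\D_\alpha : \tilde V \to \E^{1,2}$ for the developing map, with associated holonomy $\gamma_\alpha := \rho(T_\alpha)$. Recall that $\gamma_\alpha$ is elliptic of angle $\alpha \bmod 2\pi$ for $\alpha > 0$ and a nontrivial parabolic for $\alpha = 0$ (Proposition \ref{prop:BTZ_fund}). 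The overall plan is to lift $\phi|_{V \cap Reg}$ to $\tilde V$, extend by analyticity to a global isometry, read off $\alpha = \beta$ from equivariance, and globalize.

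The first key step is to show $\phi(p) \in \sing(\E^{1,2}_\beta)$. Assume not. After shrinking $V$, $\phi(V)$ lies in a Minkowski chart around $\phi(p)$, so $\phi|_{V \cap Reg}$ is an isometric embedding into $\E^{1,2}$. Its lift $\tilde\phi : \tilde V \to \E^{1,2}$ is $T_\alpha$-invariant (since $\E^{1,2}$ is simply connected), and analyticity writes $\tilde\phi = h \circ \D_\alpha$ for some $h \in \isom(\E^{1,2})$. Invariance forces $\gamma_\alpha$ to pointwise fix the open set $\D_\alpha(\tilde V)$, which is impossible when $\gamma_\alpha \neq \mathrm{id}$; this kills every $\alpha$ except those in $2\pi\Z_{\geq 2}$. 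In that residual subcase $\gamma_\alpha$ is indeed trivial in $\isom(\E^{1,2})$, but then $\tilde\phi$ is $2\pi\Z$-invariant while the deck group $\alpha\Z$ is strictly contained in $2\pi\Z$, so $\tilde\phi$ factors through a coarser quotient than $V \cap Reg$ and $\phi$ fails to be injective, a contradiction.

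Having secured $\phi(p) \in \sing(\E^{1,2}_\beta)$, I would pick a tubular neighborhood $V''$ of $\phi(p)$ containing $\phi(V)$ and lift to $\tilde\phi : \tilde V \to \tilde V''$. The case distinction on the type of $\gamma_\alpha$ is handled at the level of $\isom(\E^{1,2})$: forming $\Phi := \D_\beta \circ \tilde\phi \circ \D_\alpha^{-1}$ and extending by analyticity to $\hat\Phi \in \isom(\E^{1,2})$ yields the equivariance $\hat\Phi \gamma_\alpha \hat\Phi^{-1} = \gamma_\beta^k$ for some nonzero integer $k$. Since elliptic and parabolic elements lie in disjoint conjugacy classes, one gets $\alpha = 0 \Leftrightarrow \beta = 0$, and the BTZ-BTZ case is then immediate. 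For $\alpha, \beta > 0$, conjugation in $\isom(\E^{1,2})$ only gives $\alpha \equiv \pm\beta \pmod{2\pi}$, so I would redo the construction one level up: $\tilde\phi$ extends by analyticity to $\hat\Phi \in \isom(\E^{1,2}_\infty) \simeq \R \times \R$, which is abelian, and equivariance then reads $\alpha\Z = \hat\Phi \alpha\Z \hat\Phi^{-1} = \beta\Z$ as subgroups of the $\theta$-translation factor, whence $\alpha = \beta$.

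For the globalization, abelianness of $\isom(\E^{1,2}_\infty)$ (resp.\ the explicit description of $\isom(\widetilde{Reg}(\BTZ))$) ensures that $\hat\Phi$ commutes with $T_\alpha$, so $\hat\Phi$ descends to an isometry of $Reg(\E^{1,2}_\alpha)$ which extends continuously across the singular line to some $\bar\Phi \in \isom(\E^{1,2}_\alpha)$. By construction $\bar\Phi$ and $\phi$ coincide on $V \cap Reg$, hence on $V$ by continuity; analyticity of the $\E^{1,2}$-structure on $\U \cap Reg$ then propagates the agreement to the whole regular locus of $\U$, and continuity finishes on $\U$. The main obstacle is twofold: in the first step the degenerate subcase $\alpha \in 2\pi\Z_{\geq 2}$ bypasses the naive holonomy obstruction and must be dispatched by an injectivity argument, and in the second step one must ascend from $\isom(\E^{1,2})$ to $\isom(\E^{1,2}_\infty)$ to pin $\alpha$ down exactly rather than only modulo $2\pi$.
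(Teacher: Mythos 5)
Your overall route — develop the regular part, compare developing maps, read the angle off the holonomy equivariance $\hat\Phi\gamma_\alpha\hat\Phi^{-1}=\gamma_\beta^k$, separate the massive and BTZ cases by the elliptic/parabolic dichotomy, and use injectivity to dispose of the degenerate angles — is essentially the paper's proof, just organized around the dichotomy ``$\phi(p)$ regular vs.\ singular'' instead of the paper's ``$\gamma\Delta=\Delta$ vs.\ $\gamma\Delta\neq\Delta$''. Your step ruling out a regular image of the singular point is correct and cleanly handled, including the subcase $\alpha\in 2\pi\Z_{\geq 2}$.

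There are, however, two concrete soft spots in the second half. First, the final identification of the angle is not right as written: abelianness of $\isom(\E^{1,2}_\infty)$ and the equivariance $\hat\Phi T_\alpha\hat\Phi^{-1}=T_\beta^k$ give only $T_\alpha=T_{k\beta}$, i.e.\ $\alpha=k\beta$, not $\alpha\Z=\beta\Z$. To force $k=1$ you must invoke injectivity of $\phi$ a second time, on a meridian loop, to exclude $\phi$ being a degree-$k$ branched covering; this is unavoidable, since $\E^{1,2}_{k\beta}$ genuinely branch-covers $\E^{1,2}_\beta$ (and, per the paper's remark, $\E^{1,2}_0$ branch-covers itself, so the ``immediate'' BTZ--BTZ case has exactly the same issue: all nonzero powers of a parabolic fixing $\Delta$ are mutually conjugate, so the conjugacy relation alone cannot pin down $k$). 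Second, the degenerate case $\alpha\in2\pi\Z_{\geq2}$ resurfaces in the singular-image branch and is not dispatched there: then $\gamma_\alpha=\mathrm{id}$, the relation $\hat\Phi\gamma_\alpha\hat\Phi^{-1}=\gamma_\beta^k$ yields no information about $\hat\Phi$, your assertion that $k\neq0$ is unjustified, and the ascent of $\hat\Phi$ to $\isom(\E^{1,2}_\infty)$ requires $\hat\Phi\Delta=\Delta$, which you can no longer deduce from fixed-point sets of the holonomies. The paper closes this case with a separate geometric argument (if $\hat\Phi\Delta\neq\Delta$ the twisted development lies in a sector invariant under $\chi(\alpha\Z)$, forcing $\chi=0$ and then contradicting injectivity on a loop); some such argument is needed in your step 2 as well, not only in step 1.
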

	 
	  \begin{proof} One can assume that $\U$ is a compact slice of tube around the singular line without loss of generality.
	  We use the notation introduced in section \ref{subsubsec:isometries}.

	   Assume $\alpha\beta\neq 0$. 
	  Lift $\phi$ to $\widetilde \phi: \widetilde{Reg}(\U)\subset \E^{1,2}_\infty \rightarrow \E^{1,2}_\infty$ equivariant with respect to 
	  some morphism $\chi:\alpha\Z\rightarrow \beta\Z$. Writing $D$ the natural projection $\E^{1,2}_\infty\rightarrow \E^{1,2}$,
	  $D_{|\widetilde {Reg}(\U)}$ and $D\circ \widetilde \phi$ are two developping map of $Reg(\U)$
	  and a thus equal up to composition by some isometry $\gamma\in \isom(\E^{1,2})$, we will call the former standard and the latter twisted.
	  Their image is a tube of respective axis 
	  $\Delta=\{r=0\}$ for the former and $\gamma \Delta$ for the latter. Furthermore, writing $\rho$ 
	  the projection of $\isom(\E^{1,2}_\infty)\rightarrow \isom(\E^{1,2})$, $\gamma\cdot\rho_{|\alpha\Z}\cdot \gamma^{-1}=(\rho \circ \chi)$. 

	   Assume $\gamma \Delta \neq \Delta$, since the image of $D$ avoids $\Delta$, so does 
	   the twisted development of $\U$. It is then a slice of tube which does not intersect $\Delta$ and 
	  it is included in some half-space $H$ of $\E^{1,2}$ which support plane contains $\Delta$ the vertical axis. 
	  Then, by connectedness of $\U$, the image of $\widetilde \phi$ is included in some sector $\{\theta_0\leq \theta \leq \theta_0+\pi\}$. 
	  However, the image should be invariant under under $\chi(\alpha\Z)$, and the only subgroup of $\beta\Z$ letting such a sector 
	  invariant is the trivial one.
	  Consequently $\chi=0$, thus the lift $\phi: \U  \rightarrow \E^{1,2}_\infty$
	  is well defined and $D\circ \widetilde \phi$ is injective, then so  is $D_{|\widetilde {Reg}(\U)}$. Furthermore, $\rho(\alpha)=0$, thus 
	  $\alpha=2\pi n$ for some $n$ greater than two. Then $D$ cannot be injective on 
	  some loop $\{r=\varepsilon,t=t_0\}$ in $\U$. Absurd.

	   Thus $\gamma \Delta=\Delta$,  the linear part of $\gamma$ is an elliptic element of axis $\Delta$ 
	  and the translation part of $\gamma$ is in $\Delta$. The isometry $\gamma$ is then in the image of $\rho$, one can then 
	  assume $\gamma=1$ by considering $\widetilde \gamma^{-1}\widetilde \phi$ intead of $\widetilde \phi$ with $\rho(\widetilde \gamma)=\gamma$. 
	  In this case, $D\circ \widetilde \phi: = D$ then $\widetilde \phi$ is a translation of angle $2\pi n$, one can then choose the lift $\widetilde \gamma$
	  of $\gamma$ such that $n=0$. 
	 Consequently, $\widetilde \phi$ is the restriction to $\widetilde{Reg}(\U)$ of an element of $\isom(\E^{1,2}_\infty)$, $\phi$ is
	  then a covering and the morphism $\chi$ is then the restriction of the multiplication $\isom(\E^{1,2}_\infty)\xrightarrow{\times n} \isom(\E^{1,2}_\infty)$, then $\alpha\Z=n\beta\Z$ and
	  using again the injectivity of $\phi$ on a standard loop, one get $\alpha=\beta$. 

	  Assume $\alpha\beta=0$, one obtain in the same way a morphism 
	  $\varphi$ such that $\rho_\alpha = \rho_\beta\circ \varphi$
	  induced by a lift $\widetilde \phi: \widetilde \U\subset \widetilde{Reg(\E^{1,2}_\alpha)}\rightarrow \widetilde{Reg(\E^{1,2}_\beta)}$. 
	  However, $\mathrm{Im} \rho_\alpha$ is generated by an elliptic isometry if $\alpha>0$ and a parabolic one 
	  if $\alpha=0$, then $\alpha$  cannot be zero if $\beta$ is not and reciprocally. Then $\alpha=\beta=0$. 
	  One again gets two developments of $\U$ the standard one and the one twisted by some $\gamma$,
	  the standard image contains a horocycle around a lightlike line $\Delta$ and is invariant exactly under the stabilizer of $\Delta$. 
	  The twisted image is then invariant exactly under the stabilizer of $\gamma \Delta$. Therefore, the image of $\chi$ is in the 
	  intersection of the two and is non trivial. Remark that the only isometries $\gamma$ such that 
	  $\gamma\mathrm{Stab}(\Delta)\gamma^{-1}\cap \mathrm{Stab}(\Delta)\neq \{1\}$ are exactly $\mathrm{Stab}(\Delta)$. Finally, 
	  $\gamma$ stabilizes $\Delta$ and one can conclude the same way as before.
	  \end{proof}

	  \begin{rem} The core argument of the proof above shows that without the hypothesis of injectivity, 
	  $\phi$ shall be induced by a branched covering $\E^{1,2}_\alpha\rightarrow \E^{1,2}_\beta$. Thus $\alpha=n\beta$ for some $n$ and
	  actually knowing that $\alpha=\beta$ gives that $\phi$ is an isomorphism if $\alpha\neq 0$.

	  Beware that $\E^{1,2}_0$ is a branched covering of itself since, using cylindrical coordinates $\R\times \R_+\times \R/2\pi\Z$,
	  the projection $\R/2\pi\Z\rightarrow \R/(\frac{2\pi}{p}\Z)$, for instance, induces an isometric branched covering 
	  $\R\times \R_+\times \R/2\pi\Z \rightarrow \R\times \R_+\times \R/2\pi\Z$, the former with the metric 
	  $-\d \tau\d r +\d r^2+r^2\d \theta^2$ and the latter with the metric  $-\d \tau\d r +\d r^2+\frac{1}{p^2}r^2\d \theta^2$. Both are coordinate 
	  systems of $\E^{1,2}_0$ from Remark \ref{rem:BTZ_angle}. Thus one couldn't get rid of the injectivity condition that easily. 
	  \end{rem}

       \subsection{Singular spacetimes}
	\label{subsection:sing_spacetime_defi}
	A singular spacetime is a patchwork of different structures. They can be associated with one another using their
	regular locus which is a natural $\E^{1,2}$-manifold. Such a patchwork must be given by an atlas identifying part of $M$
	to an open subset of one of the model spaces, the chart must send regular part on regular part whenever they intersect and 
	the regular locus must be endowed with a $\E^{1,2}$-structure.
	\begin{defi}\label{def:singular_spacetime}
	Let $A$ be a subset of $\R_+$.
	  A  $\E^{1,2}_A$-manifold is a second countable Hausdorff topological space $M$ with an atlas $\mathcal A=(\U_i,\phi_i)_{i\in I}$ such that 
	  \begin{itemize}
	  \item For every $(\U,\phi) \in \mathcal A$, there
	  exists an open set $\V$ of $\E^{1,2}_\alpha$ for some $\alpha \in A$  such that $\phi : \U \rightarrow \V$
	  is a homeomorphism.
	  \item For all $(\U_1,\phi_1),(\U_2,\phi_2) \in \mathcal A,$ 
	  $$ \phi_{2}\circ \phi^{-1}_{1}\left[Reg(\phi_{1}(\U_2\cap \U_1))\right]\subset Reg(\phi_{2}(\U_1\cap\U_2))$$
	  and the restriction of $\phi_{2}\circ \phi^{-1}_{1}$ to $Reg(\phi_{1}(\U_1\cap \U_2))$ is a $\E^{1,2}$-morphism.
	  
	  \end{itemize}

	  For $\alpha \in A\setminus\{2\pi\}$, $\sing_\alpha$  denote the subset of $M$ that a chart sends to a singular point of $\E^{1,2}_\alpha$, and  $\sing_{2\pi}=\emptyset$. 
	\end{defi}

	In the following $A$ is a subset of $\R_+$ and 
	$M$ is a $\E^{1,2}_A$-manifold. It is not obvious from the definition that a singular point in $M$
	does not admit charts of different types.
	We need to prove the regular part $Reg(M)$ and the singular parts $\sing_\alpha(M), \alpha\in\R_+$ form a well defined partition 
	of $M$. 
	\begin{prop} 
	$\left(\sing_\alpha\right)_{\alpha\in A}$  is a family of disjoint closed submanifolds of dimension 1.

	\end{prop}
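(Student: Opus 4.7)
The plan is to reduce the proposition to Proposition \ref{prop:isom}. The heart of the argument will be a \emph{chart-independence} claim: if $p\in\sing_\alpha$ with $\alpha\neq 2\pi$ and $(\U,\phi)$ is any chart around $p$ with values in $\E^{1,2}_\beta$, then $\alpha=\beta$ and $\phi(p)\in\sing(\E^{1,2}_\beta)$. Once this is secured, disjointness, closedness, and the 1-dimensional submanifold structure all reduce to local properties of $\{r=0\}\subset\E^{1,2}_\alpha$ in the model spaces.

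To establish the claim, I would fix a second chart $(\U',\phi')$ witnessing $p\in\sing_\alpha$, so that $\phi'(\U')\subset\E^{1,2}_\alpha$ and $\phi'(p)$ lies on the singular line. Shrinking if necessary, I may assume $\U\cap\U'$ is a connected open neighborhood of $p$ whose image under $\phi'$ is an open connected subset of $\E^{1,2}_\alpha$ containing the singular point $\phi'(p)$. The transition map $\psi=\phi\circ\phi'^{-1}$ is then continuous on this set, and by the atlas axiom its restriction to the regular part is an injective $\E^{1,2}$-morphism into $\E^{1,2}_\beta$ (injectivity because $\psi$ is a homeomorphism onto its image). Proposition \ref{prop:isom} then forces $\alpha=\beta$ and exhibits $\psi$ as the restriction of an element of $\isom(\E^{1,2}_\alpha)$. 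Since global isometries preserve the singular line, $\phi(p)=\psi(\phi'(p))$ lies in $\sing(\E^{1,2}_\alpha)$, proving the claim.

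The three consequences then follow quickly. For disjointness, if $p\in\sing_\alpha\cap\sing_{\alpha'}$ with $\alpha\neq 2\pi$, the claim applied to a chart witnessing $p\in\sing_{\alpha'}$ gives $\alpha=\alpha'$, while the case $\alpha=2\pi$ is vacuous since $\sing_{2\pi}=\emptyset$ by definition. For closedness, in a chart $(\U,\phi)$ of type $\beta$, one has $\sing_\alpha\cap\U=\phi^{-1}(\sing(\E^{1,2}_\alpha))$ when $\beta=\alpha$ (closed in $\U$ by continuity of $\phi$) and $\sing_\alpha\cap\U=\emptyset$ otherwise; hence $\sing_\alpha$ is closed in $M$. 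The 1-dimensional submanifold structure is entirely local: in each chart of type $\alpha$ it coincides with the smooth line $\{r=0\}\subset\E^{1,2}_\alpha$. The main obstacle is the chart-independence step; without Proposition \ref{prop:isom} one could not rule out branched covering-type identifications between singular lines of different angles, and that is exactly the pathology that rigidity forbids.
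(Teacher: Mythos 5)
Your proof is correct and follows essentially the same route as the paper: the local submanifold structure is read off from a chart of the right type, and Proposition \ref{prop:isom} applied to transition maps (which are continuous, and injective $\E^{1,2}$-morphisms on the regular part by the atlas axioms) is the key input ruling out a point carrying charts of two different types. The only difference is organizational --- you channel everything through a single chart-independence claim, whereas the paper invokes Proposition \ref{prop:isom} only for disjointness and gets the closed-submanifold statement directly from the ``regular maps to regular'' transition axiom.
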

	\begin{proof}
	Let $\alpha \in \R_+\setminus\{2\pi\}$ and let $p\in \sing_\alpha$ be a singular point, there exists a chart $\phi:\U\rightarrow \V$ around $p$ such that
	$\V\subset \E_{\alpha}$ and such that $\phi(p) \in \sing(\E_\alpha)$. For any other chart 
	$\phi':\U'\rightarrow \V'$, $\phi'\circ\phi^{-1}(Reg(\V)\cap \phi(\U'))\subset Reg(\V')$ thus
	$\sing_\alpha \cap \U= \phi^{-1}(\sing(\E^{1,2}_\alpha))$. Since $\phi$  is a diffeomorphism and $\sing(\E_\alpha^{1,2})$ is 
	a closed  1-dimensional submanifold of $\E_\alpha^{1,2}$, so is $\sing_\alpha\cap \U$. 
	For $p\in Reg(M)$ and $\U$ a chart neighborhood of $p$, we have  $\sing_\alpha\cap \U=\emptyset$. Then $\sing_\alpha$ is a closed
	1-dimensional submanifold. 
	
	Let  $\alpha,\beta \in \R_+^2$ and assume there exists $p\in \sing_\alpha\cap \sing_\beta$. There exists charts $\phi_\alpha : \U_\alpha \rightarrow \V_\alpha$
	$\phi_\beta : \U_\beta \rightarrow \V_\beta$ such that $\phi_\alpha(p) \in \Delta_{\E_{\alpha}^{1,2}}$ and $\phi_\beta(p) \in \Delta_{\E_{\beta}^{1,2}}$. 
	Then, writing $\V_\alpha'= Reg(\V_\alpha \cap \phi_\alpha(\U_\beta))$ and  $\V_\beta'= Reg(\V_\beta \cap \phi_\beta(\U_\alpha))$,
	$\phi_\beta\circ \phi_\alpha^{-1} : \V_\alpha' \rightarrow \V'_\beta$ is an isomorphism of $\E^{1,2}$-structures. 
	Since $\V'_\alpha$ is the regular part of an open subset of $\E^{1,2}_\alpha$ containing a singular point, from Proposition \ref{prop:isom} we deduce that
	$\alpha=\beta$. 
	  
	\end{proof}

	\begin{defi}[Morphisms and isomorphisms] 
	 Let $M,N$ be $\E^{1,2}_A$-manifolds.
	A continous map $\phi:M\rightarrow N$ is a $\E_A^{1,2}$-morphism if 
	$\phi_{|Reg(M)}^{|Reg(N)}:Reg(M)\rightarrow Reg(N)$ is a $\E^{1,2}$-morphism.
	
	A morphism $\phi$ is an isomorphism if it is bijective.
	\end{defi}

	Consider a $\E^{1,2}_A$-structure $\mathcal A$ on a manifold $M$ and consider thiner atlas $\mathcal A'$.
	The second atlas defines a second $\E^{1,2}_A$-structure on $M$.
	The identity is an isomophism between the two $\E^{1,2}_A$-structures, they are thus identified.

	\begin{prop}\label{prop:liouville}Let $M$ and $N$ be connected $\E_A^{1,2}$-manifolds. 
	Let $\phi_1,\phi_2 : M \rightarrow N$  be two $\E^{1,2}_A$-morphisms. If there exists an open subset $\U\subset M$ such that 
	$\phi_{1|\U}=\phi_{2|\U}$ then $\phi_1=\phi_2$. 
	\end{prop}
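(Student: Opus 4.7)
The plan is to reduce the statement to the regular part via density, apply the standard $(G,X)$ identity principle there, and then extend the resulting equality to all of $M$ by continuity. For this I need $Reg(M)$ to be connected and dense in $M$. Density is immediate: by the previous proposition $\sing(M) = \bigsqcup_{\alpha \in A} \sing_\alpha$ is a disjoint union of closed $1$-dimensional submanifolds of the $3$-manifold $M$, so it has empty interior. Connectedness follows from the codimension-$2$ local structure $\R \times \R^2$ around each component of $\sing(M)$, which cannot disconnect the connected manifold $M$. In particular, since $\U$ is open and nonempty in $M$, the set $\U \cap Reg(M)$ is nonempty.

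The heart of the argument takes place on $Reg(M)$. By definition of a $\E^{1,2}_A$-morphism, the restrictions $\phi_1|_{Reg(M)}$ and $\phi_2|_{Reg(M)}$ are $\E^{1,2}$-morphisms into $Reg(N)$. Set
$$E = \{ p \in Reg(M) : \phi_1 \text{ and } \phi_2 \text{ coincide on some neighborhood of } p\}.$$
This set is open by definition and contains $\U \cap Reg(M)$, so it is nonempty. To show $E$ is closed in $Reg(M)$, pick $p \in \overline{E} \cap Reg(M)$; by continuity $\phi_1(p) = \phi_2(p)$. Choose a connected chart $(\U_0,\varphi)$ around $p$ and a chart $(\U',\varphi')$ around $\phi_1(p)$, small enough that $\phi_i(\U_0) \subset \U'$ for $i=1,2$. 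Then $g_i := \varphi' \circ \phi_i \circ \varphi^{-1}$ is the restriction of an element of $\isom(\E^{1,2})$. Since $g_1 = g_2$ on the nonempty open set obtained by pulling back a neighborhood of some point of $E$ near $p$, the analyticity of the $(\isom(\E^{1,2}), \E^{1,2})$-structure forces $g_1 = g_2$, hence $\phi_1 = \phi_2$ on all of $\U_0$, so $p \in E$. As $Reg(M)$ is connected and $E$ is nonempty, open, and closed, we get $E = Reg(M)$.

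Finally, the two continuous maps $\phi_1, \phi_2 : M \rightarrow N$ agree on the dense subset $Reg(M)$, so they agree on all of $M$. The only mildly delicate point is verifying that $Reg(M)$ is connected; everything else is a routine application of the standard identity principle for $(G,X)$-morphisms, and Proposition \ref{prop:isom} is not actually invoked here, since continuity at singular points is enough to propagate the equality past $\sing(M)$.
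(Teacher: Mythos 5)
Your proof is correct and follows essentially the same route as the paper: establish that $Reg(M)$ is open, connected and dense, apply the identity principle for $(G,X)$-morphisms on the regular locus, and conclude on all of $M$ by density and continuity. The paper simply cites the connectedness of the $\E^{1,2}$-structure on $Reg(M)$ where you spell out the open-closed argument, but the substance is identical.
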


	\begin{proof} Since $M$ and $N$ are 3-dimensionnal manifolds and since $Sing(M)$ and $Sing(N)$ are embedded 1-dimensional manifolds,
	$Reg(M)$ and $Reg(N)$ are open connected and dense. Since $Reg(M)$ is a connected $\E^{1,2}$-structure, $\phi_{1|Reg(M)}=\phi_{2|Reg(M)}$. By density of $Reg(M)$
	and continuity of $\phi_1$ and $\phi_2$, $\phi_1=\phi_2$. 
	\end{proof}

	 We end this section by an extension to singular manifold of a property we gave for the BTZ model space.
	 \begin{lem} \label{lem:past_BTZ}
	    Let $M$ a $\E^{1,2}_A$-manifold then
	    \begin{itemize}
	    \item a connected component of $\sing_0(M)$ is an inextendible causal curve ;
	    \item every causal curve $c$ of $M$ decomposes into $c=\Delta\cup c^0$ where $\Delta=c\cap \sing_0(M)$ and 
	    $c^0=c\setminus \sing_0(M)$. Furthermore, $\Delta$ and $c^0$ are connected and $\Delta$ is in the past of $c^0$.
	  \end{itemize}
	\end{lem}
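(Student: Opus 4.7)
The plan is to deduce both statements from the local BTZ decomposition result (Lemma \ref{lem:BTZ_causal_curve}), by transporting it through $\E^{1,2}_0$-charts around singular points of type $0$.

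For the first assertion, I would start by noting that by the preceding proposition a connected component $\Delta$ of $\sing_0(M)$ is a connected closed $1$-dimensional submanifold of $M$, hence diffeomorphic to $\R$ or $S^1$. In any chart $\phi:\U\to\V\subset\E^{1,2}_0$ meeting $\Delta$, the intersection $\Delta\cap\U$ is identified with an open piece of the lightlike singular line of $\E^{1,2}_0$; I would parametrise $\Delta$ so that the coordinate $\tau$ increases in each such chart, and invoke the fact that the time orientation of $M$ is global to glue these local choices into a single future-directed causal parametrisation. For inextendibility, I would suppose a causal extension adds a new point $p$, which must be a limit of $\Delta$; since $\sing_0(M)$ is closed and $\Delta$ is clopen inside the locally connected $\sing_0(M)$, one gets $p\in\Delta$, so $p$ is already hit by the original parametrisation — contradiction (in the $S^1$ case there is nothing to extend in the first place).

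For the second assertion, let $c:I\to M$ be a causal curve and set $S=c^{-1}(\sing_0(M))$, $R=c^{-1}(M\setminus\sing_0(M))$. The key step I plan to prove is that $S$ is downward closed in $I$: once the curve has left $\sing_0(M)$, it cannot return. Assuming for contradiction $t_1<t_2$ with $t_1\in R$ and $t_2\in S$, I set $t^\ast=\inf\{t\in[t_1,t_2]:t\in S\}$; closedness of $\sing_0(M)$ gives $c(t^\ast)\in\sing_0(M)$ and openness of its complement gives $t^\ast>t_1$. I then pick a chart $\phi:\U\to\V\subset\E^{1,2}_0$ around $c(t^\ast)$. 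Here I would invoke the compatibility clause of Definition \ref{def:singular_spacetime} (itself resting on Proposition \ref{prop:isom}) to note that $\U$ contains no points of $\sing_\beta(M)$ for $\beta\neq 0$, so $\U\subset Reg(M)\cup\sing_0(M)$. For sufficiently small $\varepsilon>0$ the piece $c([t^\ast-\varepsilon,t^\ast])$ lies inside $\U$, and $\phi\circ c$ becomes a causal curve in $\E^{1,2}_0$ whose sole singular value is its right endpoint while its interior lies in $Reg(\E^{1,2}_0)$. This contradicts Lemma \ref{lem:BTZ_causal_curve}(ii), which forces the singular part of a causal curve to lie in the past of the regular part.

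Once $S$ is shown to be an initial sub-interval of $I$ and $R$ its complementary final sub-interval, connectedness of $\Delta=c(S)$ and $c^0=c(R)$ is automatic, as is the statement that $\Delta$ precedes $c^0$ along $c$. The main technical obstacle I anticipate is precisely the clean application of the local lemma: it relies on the fact that a BTZ chart of $M$ cannot inadvertently encode singularities of another type, which is not obvious \emph{a priori} but is already built into the definition of a singular spacetime via the rigidity Proposition \ref{prop:isom}. Everything else is routine bookkeeping on intervals of $I$.
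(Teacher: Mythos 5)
Your proposal is correct and follows essentially the same route as the paper: both reduce to the local decomposition of Lemma \ref{lem:BTZ_causal_curve} in an $\E^{1,2}_0$-chart and run a connectedness argument along the parameter interval (the paper phrases it as ``the causal past of a point of a BTZ line stays in that line'' via an open-and-closed set of parameters on a past causal curve, you phrase it as ``$c^{-1}(\sing_0(M))$ is an initial segment'' via an infimum on a future causal curve). The differences are only cosmetic, and your handling of the first bullet (closed, locally causal $1$-submanifold, hence inextendible causal curve) matches the paper's.
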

	\begin{proof}
	  A connected component $\Delta$ of $\sing_0(M)$ is a 1-dimensional submanifold, connected and locally causal. Therefore, 
	  it is a causal curve. Since it is closed, it is also inextendible.
	  
	  Assume $\Delta$ is non empty and take some $p\in \Delta$. Let  $q\in J^-(p)$ and let $c':[0,1]\rightarrow J^-(p)$
	  be a past causal curve such that $c'(0)=p$ and $c'(1)=q$. 
	  Then write : 
	  $$I=\{s\in [0,1]~|~ c([0,s])\subset \Delta\}. $$
	  \begin{itemize}
	    \item $0\in I$ so $I$ is not empty.
	    \item Take $s\in I$, $c'(s)$ is of type $\E^{1,2}_0$ and in a local chart $\U$, $J^-_\U(c'(s))$ is in the singular line around $c'(s)$. 
	    Thus for some $\varepsilon>0$, $c'(]s,s+\varepsilon[)\subset \Delta_{c'(s)}=\Delta$. 
	    Thus $[0,s+\varepsilon]\subset I$ and $I$ is open.
	    \item  Let   $s=\sup I$, $c'(]s-\varepsilon,s[)\subset S^0$. By closure of $S^0$, $\Delta$ is closed thus  $c'(s)\in \Delta$ and
	    $s\in I$. Then $I$  is closed.	    
	  \end{itemize}
	  Finally, $I=[0,1]$ and $q\in \Delta$. 
	  We conclude that $\Delta$ is connected that there is no point of $c^0$ in the past of $\Delta$.
	\end{proof}
 
  \section{Global hyperbolicity and Cauchy-extensions of singular spacetimes } \label{sec:global_struct}
       
      We remind a Geroch characterisation of globally hyperbolic of regular spacetime and extend it to singular one. 
      We extend the smoothing theorem of Bernal and Sanchez and the the Cauchy-Maximal extension  theorem by Geroch and Choquet-Bruhat.
      We also prove that a BTZ line is complete in  the future if the space-time is Cauchy-maximal.

      \subsection{Global hyperbolicity, Geroch characterisation}
	\label{sec:geroch}
	Let $M$ be a $\E_{A}^{1,2}$-manifold, the causality on $M$ is inherited from the causality and causal orientation 
	of each chart,
	we can then speak of causal curve, acausal domain, causal/chronological future/past, etc. 
	The chronological past/future are still open (maybe empty) since this property is true in every model spaces. 
	We define global hyperbolicity, give the a Geroch splitting theorem and some properties.

	\begin{defi} Let $P \subset M$  be a subset of $M$. 
	\begin{itemize}
	\item The future Cauchy development of $P$ is the set 
	$$D^+(P)=\{x\in M | \forall c:[0,+\infty[ \rightarrow M \text{~inextendible past causal curve}, c(0)=x\Rightarrow c\cap P\neq \emptyset \} $$
	  \item The past Cauchy development of $P$ is the set 
	$$D^-(P)=\{x\in M | \forall c:[0,+\infty[\rightarrow M \text{~inextendible future causal curve}, c(0)=x\Rightarrow c\cap P\neq \emptyset \} $$
	\item The Cauchy development of $P$ is the set $D(P)=D^+(P)\cup D^-(P)$
	\end{itemize}
	\end{defi}

	\begin{defi}[Cauchy Surface] A Cauchy-surface in a $\E^{1,2}_{A}$-manifold is a $\C^0$-surface $\Sigma\subset M$ such that 
	all inextendible causal curves intersects $\Sigma$ exactly once. 
	\end{defi}
	In particular if $\Sigma$ is a Cauchy-surface of $M$ then $\D(\Sigma)=M$.
	\begin{defi}[Globally hyperbolic manifold]If a $\E^{1,2}_A$-manifold has a Cauchy-surface, it is globally hyperbolic.
	 
	\end{defi}

	The following theorem gives a fundamental charaterisation of globally hyperbolic spacetimes. 
	Neither Geroch nor Bernal and Sanchez have proved this 
	for singular manifolds but the usual arguments apply. The  method is to define a time function as a volume function : 
	$$ T(x)=\ln \frac{\mu(I^-(x))}{\mu(I^+(x))} $$
	where $\mu$ is a finite measure on a spacetime $M$. Usually, one uses an absolutely continuous measure, however
	such a measure put a zero weight on the past of a BTZ point. The solution in the presence of BTZ lines
	is to put weight on the BTZ lines and choosing a measure which is the sum of a 3 dimensional absolutely continuous measure on $M$
	and a 1 dimensional absolutely continuous measure on $\sing_0(M)$.
	The definition of causal spacetime
	along with an extensive exposition of the hierachy of causality properties can be found in \cite{MR2436235} and a direct exposition of 
	basic properties of such volume functions in \cite{MR962333}.
	\begin{theo_ext}[\cite{MR0270697},\cite{MR2294243}] \label{theo:geroch}Let $M$ be a $\E^{1,2}_{A}$-manifold, $(i)\Leftrightarrow (ii)$.
	\begin{enumerate}[(i)]
	\item $M$ is globally hyperbolic.
	\item $M$ is causal and  $\forall p,q\in M, \overline \Diamond_p^q$ is compact.
	\end{enumerate}
	\end{theo_ext}

	\begin{prop}
	 If $\Sigma$ is a Cauchy-surface of $M$ then there exists a homeomorphism $M \xrightarrow{\phi}\R\times \Sigma$ such that for every $C\in \R$, $\phi^{-1}(\{C\}\times \Sigma)$ is a Cauchy-surface.
	\end{prop}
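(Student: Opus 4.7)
The plan is to run Geroch's classical argument using the volume-ratio time function sketched just before Theorem \ref{theo:geroch}. Choose a finite measure $\mu = \mu_{\mathrm{reg}} + \mu_{\mathrm{sing}}$ on $M$ obtained as the sum of an absolutely continuous 3-dimensional volume and an absolutely continuous 1-dimensional measure supported on $\sing_0(M)$, and set
\[ T(x) = \ln\frac{\mu(I^-(x))}{\mu(I^+(x))}. \]
The singular component ensures $\mu(I^\pm(x))>0$ even at the endpoints of BTZ rays, where by Lemma \ref{lem:past_BTZ} the open chronological past or future could otherwise contain too little regular mass. Strict monotonicity of $T$ along causal curves then holds: on an arc contained in $Reg(M)$ this is Geroch's original argument, while on an arc traversing a BTZ segment the singular component contributes continuously to the past by Lemma \ref{lem:past_BTZ}.

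Next I show that $T$ is continuous and that every level set $\Sigma_C := T^{-1}(C)$ is a Cauchy-surface. Continuity reduces to the claim $\mu(\partial I^\pm(x)) = 0$: for $\mu_{\mathrm{reg}}$ this is classical since $\partial I^\pm(x)$ is a null locus of zero 3-dimensional volume; for $\mu_{\mathrm{sing}}$, Lemma \ref{lem:past_BTZ} shows that $\partial I^\pm(x)\cap \sing_0(M)$ reduces to at most an endpoint of a BTZ ray, hence is $\mu_{\mathrm{sing}}$-negligible. Acausality of $\Sigma_C$ follows from strict monotonicity, and for an inextendible causal curve $c$ the facts that $T\circ c\to\pm\infty$ at its endpoints (using that $\Sigma$ is a Cauchy-surface, the compactness statement in Theorem \ref{theo:geroch}, and finiteness of $\mu$) combine with the intermediate value theorem to yield exactly one intersection with $\Sigma_C$.

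I would then construct a continuous, future-directed, causal vector field $X$ on $M$ which is tangent to $\sing(M)$ on the singular locus: $\partial_t$ in massive particle charts and $\partial_\tau$ in BTZ charts provide such fields locally, and a partition of unity glues them together, convex combinations of future causal vectors remaining future causal and tangent to the singular strata where they are already prescribed. Reparametrising so that $X\cdot T = 1$, the flow $\Phi_s$ of $X$ is complete since $T$ is unbounded on inextendible causal curves, and each integral curve meets $\Sigma$ exactly once, defining a continuous projection $\sigma\colon M\to\Sigma$. Setting $\phi(p)=(T(p),\sigma(p))$ with inverse $(C,q)\mapsto \Phi_{C-T(q)}(q)$ gives the desired homeomorphism, and $\phi^{-1}(\{C\}\times\Sigma)=\Sigma_C$ by construction.

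The main obstacle is the continuity of $T$ at points of $\sing_0(M)$: near a BTZ line the chronological and causal futures of a singular point differ by a ray (as noted after Lemma \ref{lem:BTZ_causal_curve}), so the classical proof that $x\mapsto\mu(I^\pm(x))$ is continuous must be re-examined on singular tubes, and the singular part of $\mu$ is precisely what prevents the ratio defining $T$ from degenerating as one approaches an endpoint of a BTZ ray. Once this point and the tangency of $X$ to singular lines are verified, Geroch's scheme carries through verbatim.
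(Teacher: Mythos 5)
The paper offers no argument of its own here (its proof is a citation of O'Neill), so an honest attempt to run Geroch's construction in the singular setting is welcome; unfortunately yours breaks at the very first step. You set $T(x)=\ln\bigl(\mu(I^-(x))/\mu(I^+(x))\bigr)$ and claim the one-dimensional component of $\mu$ carried by $\sing_0(M)$ makes $\mu(I^-(x))>0$ at BTZ points. It cannot: by Lemma \ref{lem:past_BTZ} any causal curve whose \emph{future} endpoint lies on a BTZ line is entirely contained in that line, and since the line is lightlike no nontrivial sub-arc of it is chronological (the paper records this as $I^+(p)=Int(J^+(p))\setminus\, ]p,+\infty[$, so dually the past singular ray is excluded from $I^-$). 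Hence $I^-(p)=\emptyset$ for \emph{every} $p\in\sing_0(M)$, not merely at endpoints of rays, and no choice of measure gives the empty set positive mass. Your $T$ is therefore identically $-\infty$ on every BTZ line; since a connected component of $\sing_0(M)$ is itself an inextendible causal curve (Lemma \ref{lem:past_BTZ}), it meets no finite level set of $T$, so the level sets are not Cauchy-surfaces. The repair --- and this is what the paper's remark about ``putting weight on the BTZ lines'' is actually pointing at --- is to use the causal past $J^-(x)$, whose intersection with a BTZ line is a closed ray of positive $\mu_{\mathrm{sing}}$-measure, in place of $I^-(x)$. One must then re-verify continuity (for regular $x$ the set $J^-(x)\setminus I^-(x)$ meets $\sing_0(M)$ in at most one point, so it is still $\mu$-negligible, but continuity as $x$ approaches the singular line needs a separate check in a tube chart) and strict monotonicity along arcs contained in a BTZ line, which now holds precisely because $\mu_{\mathrm{sing}}$ is absolutely continuous with full support there.

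A secondary weak point is the vector field. The paper itself emphasizes (Figure \ref{fig:cone_BTZ}) that the field of future causal cones is \emph{discontinuous} at every singular point, so gluing $\partial_t$ and $\partial_\tau$ by a partition of unity into a ``continuous future causal field tangent to $\sing(M)$'' requires an argument that the convex combinations remain future causal in the limit onto the singular line; moreover a merely continuous field need not have unique integral curves, so the flow $\Phi_s$ and the projection $\sigma$ are not automatically well defined. Both points are repairable (transition maps between BTZ charts preserve the singular line by Proposition \ref{prop:isom}, and one can smooth the field on the regular part), but as written the second half of your argument is only a plan, not a proof.
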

	\begin{proof}
	 See \cite{Oneil}
	\end{proof}

	The topology generated by the open diamonds $\Diamond_p^q$ is called the {\it Alexandrov topology}.
	In the case of globally hyperbolic spacetimes, the Alexandrov topology coincides with the standard topology on 
	the underlying manifold.

	\begin{lem}\label{lem:future_closed}
	Let $M$ be a globally hyperbolic $\E^{1,2}_A$-manifold and   let $K_1,K_2$ be compact subsets. Then
	  \begin{itemize}\item $J^+(K_1)\cap J^-(K_2)$ is compact ;  
	    \item $J^+(K_1)$ and $J^-(K_2)$ are closed.
	  \end{itemize}
	\end{lem}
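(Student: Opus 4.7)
The central input is the compactness of every closed diamond $\overline{\Diamond_p^q}$ furnished by Theorem \ref{theo:geroch}. The plan is to assign to each $p\in M$ a pair $(U_p,p^-)$ with $p^-\leq p$, $U_p$ an open neighborhood of $p$, and $U_p\subset J^+(p^-)$, then cover $K_1$ by finitely many such $U_p$'s to control $J^+(K_1)$ by finitely many cones $J^+(p^-)$; a symmetric construction with $(V_p,p^+)$ handles $K_2$. For $p\in Reg(M)$ or on a massive particle singular line, the timelike directions at $p$ make $I^-(p)$ non-empty, so one may take $p^-\in I^-(p)$ and $U_p=I^+(p^-)$. The delicate case is $p\in\sing_0(M)$, where $I^-(p)=\emptyset$ and the classical recipe breaks: instead, I would take $p^-$ strictly below $p$ on the BTZ singular line (available by Lemma \ref{lem:past_BTZ}) and set $U_p=Int(J^+(p^-))$. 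The description of the causal structure of $\E^{1,2}_0$ following Lemma \ref{lem:BTZ_causal_curve} shows that the open BTZ ray $]p^-,+\infty[$ lies in $Int(J^+(p^-))$, so $U_p$ is genuinely an open neighborhood of $p$.

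Extracting finite subcovers $K_1\subset\bigcup_{i=1}^k U_{p_i}$ and $K_2\subset\bigcup_{j=1}^l V_{q_j}$ with associated points $p_i^-$ and $q_j^+$, transitivity of the causal order gives $J^+(K_1)\subset\bigcup_i J^+(p_i^-)$ and $J^-(K_2)\subset\bigcup_j J^-(q_j^+)$, whence
\[
J^+(K_1)\cap J^-(K_2)\;\subset\;\bigcup_{i,j}\overline{\Diamond_{p_i^-}^{q_j^+}},
\]
a finite union of compact sets. Thus $J^+(K_1)\cap J^-(K_2)$ is relatively compact.

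To finish I would show that $J^+(K_1)$ is closed, the case of $J^-(K_2)$ being symmetric; this upgrades relative compactness to compactness. Let $x_n\in J^+(K_1)$ with $x_n\to x$, and pick $p_n\in K_1$ with $p_n\leq x_n$. After extraction, $p_n\to p\in K_1$. Applying the neighborhood construction to a compact neighborhood of $x$ yields a future point $q^+$ with $x_n\in J^-(q^+)$ eventually, and choosing $p^-$ just below $p$ guarantees $p^-\leq p_n$ eventually. Hence $x_n$ together with a causal curve $\gamma_n$ from $p_n$ to $x_n$ lies in the single compact diamond $\overline{\Diamond_{p^-}^{q^+}}$ for large $n$. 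A standard limit-curve argument, adapted to the singular setting via the decomposition of causal curves given by Lemma \ref{lem:past_BTZ} to handle BTZ segments, then produces a causal curve from $p$ to $x$, yielding $p\leq x$ and so $x\in J^+(K_1)$.

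The main obstacle is the BTZ case of the neighborhood construction: in the absence of a chronological past, the basic neighborhood of a BTZ point $p$ must come from $Int(J^+(p^-))$ with $p^-$ on the BTZ line, exploiting the lightlike geometry which forces the open singular ray above $p^-$ into the interior of $J^+(p^-)$. The limit-curve argument in the closedness step is a secondary technicality, again resolved via Lemma \ref{lem:past_BTZ}, which guarantees that any causal curve traverses $\sing_0(M)$ only on an initial connected segment lying in the past of its regular portion.
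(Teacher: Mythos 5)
Your construction of the basic neighborhoods isolates exactly the point where the singular setting bites, and it is the same idea the paper uses implicitly: at a BTZ point $p$ one has $I^-(p)=\emptyset$, so the past anchor $p^-$ must be taken on the singular line below $p$, and $J^+(p^-)$ is nevertheless a neighborhood of $p$ because the open ray $]p^-,+\infty[$ lies in $Int(J^+(p^-))$. One caveat, though: your ``symmetric construction'' for $K_2$ cannot be the literal mirror image of this trick. For a BTZ point $p$, the set $J^-(p^+)$ with $p^+$ on the singular line above $p$ is just the past singular ray (by the decomposition of Lemma \ref{lem:past_BTZ}, a causal curve ending on $\sing_0$ is contained in $\sing_0$), so it has empty interior and is not a neighborhood of anything. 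The correct future anchor is a point $p^+\in I^+(p)$, which is non-empty even for BTZ points, with $V_p=I^-(p^+)$; the asymmetry of the BTZ causal structure makes the future-side construction the easy one and the past-side the delicate one. With that reading, your finite-subcover argument for $J^+(K_1)\cap J^-(K_2)\subset\bigcup_{i,j}\overline{\Diamond_{p_i^-}^{q_j^+}}$ is a clean alternative to the paper's proof, which instead extracts convergent subsequences $p_n\to p$, $q_n\to q$ and traps the limit point in $\bigcap_n\overline{\Diamond_{p'_n}^{q'_n}}=\overline{\Diamond_{p}^{q}}$ using nested compact diamonds.

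The genuine gap is in your closedness step. The paper proves the two bullets in the opposite logical order: it first shows directly that $J^+(K_1)\cap J^-(K_2)$ is sequentially compact (limits of sequences stay \emph{inside} the set, via the nested-diamond identity above), and closedness of $J^+(K_1)$ then follows in one line --- a neighborhood of the limit $x$ of the form $J^-(q)$ gives $x_n\in J^-(q)\cap J^+(K_1)$ eventually, and that set is already known to be compact, hence closed. You cannot run this shortcut, because your first part only establishes \emph{relative} compactness, so you are forced onto a limit-curve argument; but no limit-curve theorem is proved anywhere in the paper, it is not a consequence of Lemma \ref{lem:past_BTZ} alone, and in the presence of massive particles and BTZ lines the local compactness of spaces of causal curves would have to be re-established from scratch. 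To be fair, the paper's own proof also leans on an unproved closedness-of-the-causal-relation fact at the corresponding spot (the identity $\bigcap_n\overline{\Diamond_{p'_n}^{q'_n}}=\overline{\Diamond_p^q}$), but it is a much lighter input than a full limit-curve theorem. Either supply the limit-curve argument in the singular setting, or restructure so that the compactness bullet is proved first with limits trapped inside (as in the paper), after which closedness of $J^+(K_1)$ is free.
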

	\begin{proof}
	  The usual arguments apply since they can be formulated using only the Alexandrov topology, the compactness of closed diamonds and the 
	  metrisability of the topology.
	  
	  Let $(x_n)_{n\in\N}$ be a sequence in $J^+(K_1)\cap J^-(K_2)$, there exists sequences $(p_n)_{n\in\N}\in K_1^\N$ and 
	  $(q_n)_{n\in\N} \in K_2^\N$ such that $x_n\in \overline \Diamond_{p_n}^{q_n}$ for all $n\in\N$. Extracting a subsequence if necessary, 
	  one can assume $p_n\xrightarrow{n\rightarrow +\infty}p $ and $q_n\xrightarrow {n\rightarrow +\infty} q$ for some $p\in K_1$ and 
	  $q\in K_2$. There exists a neighborhood of $p$ of the form $J^+(p')$ and a neighborhood of $q$ of the form $J^-(q')$ for some 
	  $p'$ and $q'$.
	  The sequences $(p_n)_{n\in\N}$ and $(q_n)_{n\in\N}$ enters respectively $J^+(p')$ and $J^-(q')$, then for $n$ big enough, 
	  $x_n\in \overline \Diamond_{p'}^{q'}$. This subset is compact, thus one can extract a subsequence of $(x_n)_{n\in\N}$ converging to some 
	  $x_\infty\in \overline\Diamond_ {p'}^{q'}$.
	  Take a sequence $(p'_n)_{n\in\N}$ of such $p'$'s converging toward $p$ and a sequence $(q'_n)_{n\in\N}$ of such $q'$'s converging toward $q$. 
	  Since each $J^+(p'_n)$ is a neighborhood of $p$ and each $J^-(q'_n)$ is a neighborhood of $q$ then forall $n$, $x_k\in \overline\Diamond _{p'_n}^{q'_n}$
	  for $k$ big enough. Finally, by compactness of $\overline \Diamond_{p'_n}^{q'_n}$, the limit 
	  $$x_\infty \in \bigcap_{n\in\N}\overline \Diamond_{p'_n}^{q'_n}=\overline\Diamond_ {p}^{q}\subset J^+(K_1)\cap J^-(K_2)$$

	  Let $x_n\xrightarrow{n\rightarrow +\infty} x$ be a converging sequence of points of $J^+(K_1)$. 
	  There exists a neighborhood of $x$ of the form $J^-(q)$,  by global hyperbolicity of $M$,
	  $J^-(q)\cap J^+(K_1)$ is compact and contains every points of
	  $x_n$ for $n$ big enough. Thus $x\in J^-(q)\cap J^+(K_1)$ and $x\in J^+(K_1)$. 
	  We prove the same way that $J^-(K_2)$ is closed.
	\end{proof}
	
      \subsection{Cauchy-extension and Cauchy-maximal singular spacetimes}
      \label{subsec:choquet_bruhat}
	Extensions and maximality of spacetimes are usually defined via Cauchy-embeddings as follows.
      	\begin{defi}[Cauchy-embeddings]
	Let $M_1,M_2$ be globally hyperbolic $\E^{1,2}_{A}$-manifolds and let $\phi:M_1\rightarrow M_2$ be a morphism. 
	$\phi$ is a Cauchy-embedding if
	it is injective and sends a Cauchy-surface of $M_1$ on a Cauchy-surface of $M_2$. 
	\end{defi}
	The definition can be loosen twice, first by only imposing the existence of a Cauchy-surface of $M_1$ that $\phi$ sends to a Cauchy-surface of $M_2$,
	it is an exercise to prove that this implies that every Cauchy-surfaces is sent to a Cauchy-surface. Second, 
	injectivity along a Cauchy-surface implies injectivity of $\phi$.
	
	We remind that a spacetime is Cauchy-maximal if every Cauchy-extension is trivial.
      	The proof of the Cauchy-maximal extension theorem given by Choquet-Bruhat and Geroch 
      	have been improved by Jan Sbierski in \cite{sbierski_geroch}. This new proof has the advantage of not using
      	Zorn's lemma, it is thus more constructive.       	
      	The existence and uniqueness of a Maximal Cauchy-extension of a singular spacetime can be proven re-writing
	the proof given by Sbierski taking some care with the particles. Indeed, it is shown in \cite{Particules_1} that 
	collisions of particles can make the uniqueness fail.
	The rigidity Proposition \ref{prop:isom} ensures the type of particles is preserved and is 
	an equivalent of local uniqueness of the solution of Einstein's Equation in our context.
	The proof of separation given by Sbierski has to be adapted to massive particles of angle greater than $2\pi$ 
	to fully work. It can be done without difficulties and the main ideas of the proof are used in section
	\ref{sec:extension_BTZ_1} for BTZ extensions, so we don't rewrite a proof of the theorem here. 
	\begin{theo_ext}[Cauchy-Maximal extension]\label{theo:cauchy_max_ext}
	  Let $M$ be a globally hyperbolic $\E^{1,2}_A$-manifold. Then, there exists a  maximal Cauchy-extension 
	   of $M$ among 	  $\E^{1,2}_A$-manifold. 
	  Furthermore, it is unique up to isomorphism.
	\end{theo_ext}
	\begin{proof}
	 See  \cite{sbierski_geroch}.
	\end{proof}

	We now prove in a Cauchy-maximal
	spacetime a BTZ line is complete in the future and there is a standard neighborhood of a future BTZ ray. 

	\begin{prop} \label{prop:tube}
	  Let $M$  be Cauchy-maximal $\E^{1,2}_A$-manifold and let $p$ be  a BTZ point in $\sing_0(M)$.
    
	  Then, the future BTZ ray from $p$ is complete and there exists a future half-tube neighborhood of $[p,+\infty[$ of constant radius.
	\end{prop}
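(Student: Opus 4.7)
The plan is to argue by Cauchy-maximality (Theorem \ref{theo:cauchy_max_ext}): if either the future ray fails to be complete or no constant-radius half-tube exists, I will construct a strict Cauchy-extension of $M$ by gluing a piece of $\E^{1,2}_0$, contradicting maximality.

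\textbf{Setup.} By Lemma \ref{lem:past_BTZ}, the connected component $\Delta$ of $\sing_0(M)$ through $p$ is an inextendible causal curve, so its future ray from $p$ carries a natural parameter $\tau \in [0, T)$ with $T \in (0, +\infty]$ inherited from local BTZ-charts. Around each point of the ray, Definition \ref{def:singular_spacetime} supplies a chart onto an open subset of $\E^{1,2}_0$ centered at a BTZ point; Proposition \ref{prop:isom} then forces two such charts to agree on their overlap up to an isometry of $\E^{1,2}_0$ fixing the BTZ line. I would use this rigidity, together with Proposition \ref{prop:liouville}, to glue these local charts into a morphism $\Phi:\mathcal{V}\to \E^{1,2}_0$ defined on an open neighborhood $\mathcal{V}$ of the future ray, sending $\Delta \cap \mathcal{V}$ bijectively onto the BTZ segment $\{r=0,\ \tau\in[0,T)\}$. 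Define $R(\tau)$ to be the supremum of radii for which the cylindrical slice around $\tau$ embeds into $\mathcal{V}$ via $\Phi$; local existence and Proposition \ref{prop:isom} show that $R$ is positive and lower semi-continuous.

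\textbf{The contradiction.} Suppose $T<+\infty$ or $\inf_\tau R(\tau)=0$. In either case I glue to $M$ along $\Phi$ a standard model half-tube $\{r<R_0,\ 0\leq \tau'<T_0\} \subset \E^{1,2}_0$, with $T_0>T$ in the first case and $R_0>\inf_\tau R(\tau)$ in the second, obtaining a space $M'$ that strictly contains $M$. Consistency of the gluing on the overlap is given by Proposition \ref{prop:liouville}. Lemma \ref{lem:past_BTZ} is the crucial tool to verify that any Cauchy-surface $\Sigma$ of $M$ remains a Cauchy-surface of $M'$: every inextendible causal curve in $M'$ splits as $\Delta'\cup c^0$ with the BTZ portion $\Delta'$ in the past of the regular part $c^0$, so $c^0$ either stays in $M$ (and meets $\Sigma$ there) or enters the glued tube through a past portion lying in $M$ which already meets $\Sigma$. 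This contradicts Theorem \ref{theo:cauchy_max_ext}, so $T=+\infty$ and $\inf_\tau R(\tau)>0$; any radius $R_0<\inf_\tau R(\tau)$ then yields the desired constant-radius half-tube neighborhood of $[p,+\infty[$.

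\textbf{Main obstacle.} The delicate step will be to verify Hausdorffness and global hyperbolicity of the glued manifold $M'$, in particular to exclude causal curves that re-enter the tube and escape $\Sigma$. Both rest on the white-hole character of BTZ lines encoded in Lemma \ref{lem:BTZ_causal_curve} and its global analogue Lemma \ref{lem:past_BTZ}: the $\tau$-coordinate is monotonic along any future-directed causal curve crossing the tube, and BTZ lines are never approached transversally by sequences of regular causal curves. Combined with Theorem \ref{theo:geroch}, these properties reduce global hyperbolicity of $M'$ to causality plus compactness of closed diamonds, both of which transfer from $M$ through the glued region thanks to the explicit causal structure of $\E^{1,2}_0$.
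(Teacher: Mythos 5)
Your overall strategy (invoke Cauchy-maximality after gluing a model half-tube of $\E^{1,2}_0$ onto the spacetime) is in the right spirit, but the way you set up the gluing leaves the two hardest points unresolved, and one of them is a genuine gap rather than a deferred technicality. You glue the enlarged tube directly onto $M$ and hope to exhibit a \emph{strict} Cauchy-extension $M'$, contradicting maximality. For this to work you must prove (a) that $M'$ is Hausdorff and (b) that $M'$ really is strictly larger, i.e.\ that the added points are not non-separated doubles of points already present in $M$. In the case $\inf_\tau R(\tau)=0$ with $T=+\infty$ these two issues are the same problem seen from two sides: the points of $M$ that force $R(\tau)$ to shrink are precisely the points liable to be non-separated from the new points you adjoin at radius between $R(\tau)$ and $R_0$, and nothing in Lemma \ref{lem:BTZ_causal_curve} or Lemma \ref{lem:past_BTZ} rules this out — those lemmas control causal curves, not the topology of the quotient. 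You name Hausdorffness as ``the main obstacle'' but give no mechanism for overcoming it, and with your construction as stated I do not see one.

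The paper avoids this entirely by a different arrangement of the same ingredients: it does \emph{not} extend $M$. Instead it first excises the future of the top disk of a small tube slice around $q=\Delta\cap\Sigma$, setting $M_0=M\setminus J^+\left(\phi^{-1}(\{\tau=\tau_2,\,r\leq R\})\right)$, and only then glues the full infinite open tube $\T=\{\tau>\tau_1,\,r<R\}$ onto $M_0$ to form $M_2$. Because $M_0$ contains nothing in the future of that disk, the Hausdorffness check for $M_2$ reduces to separating boundary points of the chart slice from points of $\{\tau=\tau_2,\,r<R\}\subset\T$, which is done by an explicit $\varepsilon$-argument on the $\tau$-coordinate. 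Then both $M$ and $M_2$ are Cauchy-extensions of $M_0$, and the \emph{uniqueness} half of Theorem \ref{theo:cauchy_max_ext} (rather than a contradiction with maximality) forces $M_2$ to Cauchy-embed into $M$; the image of $\T$ is the desired complete, constant-radius half-tube. Note also that the radius is obtained for free as the radius of the initial chart at $q$, so there is no need for your function $R(\tau)$ or for assembling a global developing map $\Phi$ along the ray. If you want to salvage your version, the excision step is the idea to import: cut away the future of an achronal disk before gluing, so that the only candidates for non-separated pairs lie on the boundary of the chart, where the $\tau$-coordinate separates them.
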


      \begin{proof}
      Consider $\Sigma$ a Cauchy-surface of $M$.
      The connected component $\Delta$ of $p$ in $\sing_0(M)$ is an inextendible causal curve thus it intersects the
      Cauchy-surface $\Sigma$ exactly once say at $q\in \Sigma \cap \Delta$.
      There exists a neighborhood $\overline \U$ of $q$ isomorphic via some isometry $\phi:\overline \U\rightarrow \E^{1,2}_0$ to 
      $$\{\tau \in [\tau_1,\tau_2], r \leq R \} \subset \E^{1,2}_0 $$
      for some positive $R$ and reals $\tau_1,\tau_2\in \R$. Take this neighborhood small enough so that the surface $\{\tau=\tau_2,r<R\}$ is achronal in $M$. 
      Consider the open tube $\mathcal T=\{\tau> \tau_1, r<R\}\subset \E^{1,2}_0$ and $\U=Int(\overline \U)$. Define
      \begin{itemize}
       \item $M_0=M\setminus J^+\left(\phi^{-1}(\{\tau=\tau_2, r\leq R\})\right)$ ;
       \item $M_2=\left(M_0\coprod \mathcal T \right)/\sim$ with  $x\sim y \Leftrightarrow \left(x\in \U, y\in \mathcal T \text{ and } \phi(x)=y \right)$. 
      \end{itemize}
      $\Sigma$ is a Cauchy-surface of $M_0$ and $M$ is Cauchy-extension of $M_0$. In order to prove that $M_2$ is a $\E^{1,2}_A$-manifold, we only need to prove 
      it is Hausdorff. Indeed $\phi$ is an isomorphism thus the union of the atlases of $M_0$ and $\mathcal T$ defines 
      a $\E^{1,2}_A$-structure on $M_2$.
      %We shall then prove it is globally hyperbolic and a Cauchy-extension of $M_0$. This will prove the lemma using the 
      %uniqueness of the Cauchy-maximal extension and the maximality of $M$.
      \\
      \underline{Claim : $M_2$ is Hausdorff.} \\
      Let $x,y \in M_2$, $x\neq y$ and let $\pi$ be the natural projection $\pi:M_0\coprod \mathcal T \rightarrow M_2$.
      If $x,y \in \pi(\U)$, consider $x_1 =\pi^{-1}(x)\cap \U$, $x_2=\pi^{-1}(x)\cap \mathcal T$, $y_1 =\pi^{-1}(y)\cap \U$, $y_2=\pi^{-1}(y)\cap \mathcal T$.
      Consider disjoint open neighborhoods $\V_{x_1}$ and $\V_{y_1}$ of $x_1$ and $y_1$. Notice that 
      $\V_x:=\pi^{-1}(\pi(\V_{x_1}))=\V_{x_1}\cup \phi(\V_{x_1})$ and that $\V_y:=\pi^{-1}(\pi(\V_{y_1}))=\V_{y_1}\cup \phi(\V_{y_1})$. 
      Therefore $\V_x$ and $\V_y$ are open and disjoint neighborhoods of $x$ and $y$. 
      Notice that $\pi^{-1}(\overline {\pi(\U)})=\overline \U \cup  \{\tau\in ]\tau_1,\tau_2],r<R\}$.
      Clearly, if $x$ and $y$ are in $M_2\setminus \overline {\pi(\U)}$, then they are separated.
      
      Then remains when $x,y\in \partial \pi(\U)=\pi(\partial \U))\cup \pi(\{\tau=\tau_2\})$. Assume $x,y\in \partial \pi(\U)$
      and consider $x_1\in \overline \U$, $y_1\in \overline \U$ such that $\pi(x_1)=x$ and $\pi(y_1)=y$. 
      Take two disjoint open neighborhoods $\V_{x_1}$ and $\V_{y_1}$ of $x_1$ and $y_1$ in $M_0$.
      We have $\pi^{-1}(\pi(\V_{x_1}))=\V_{x_1}\cup \phi(\V_{x_1}\cap \U)$ and $\pi^{-1}(\pi(\V_{y_1}))=\V_{y_1}\cup \phi(\V_{y_1}\cap \U)$.
      Then $x$ and $y$ are separated. The same way, we can separate two points $x,y\in \pi(\{\tau=\tau_2,r<R\})$. 
      Assume $x= \pi(x_1)$ with $x_1\in \partial \overline \U$ and $y= \pi(y_1)$ with $y_1\in \{\tau=\tau_2,r<R\}$. 
      The point $x_1$ is not in $\phi^{-1}(\{\tau=\tau_2\})$ by definition of $M_0$. Therefore, the $\tau$ coordinate of 
      $\phi(x_1)$ is less than $\tau_2$. Take a neighborhood $\V_{x_1} $of $x_1$ such that $\phi(\V_{x_1}\cap \U)\subset \{\tau< \tau_2-\varepsilon\}$ for some $\varepsilon >0$.
      Then, take $\V_{y_2}=\{\tau>\tau_2-\varepsilon,r<R\}$. We get $\pi^{-1}(\pi(\V_{x_1}))=\V_{x_1}\cup \phi(\U\cap \V_{x_1})$
      and $\pi^{-1}(\pi(\V_{x_2}))=\V_{x_2}\cup \phi^{-1}(\{\tau\in ]\tau_2-\varepsilon,\tau_2[\})$. Therefore, $\pi(\V_{x_1})$
       and $\pi(\V_{y_1})$ are open and disjoint. Finally, $M_2$ is Hausdorff.
       
     Consider  a future inextendible causal curve in $M_2$ say $c$ and write $\Pi=\{\tau=\tau_2,r<R\}\subset \mathcal T$. 
     The curve $c$ can be decomposed into two part : $c_0=c\cap M_0$ and  $c_1=c\cap J^+(\pi(\{\tau=\tau_2,r<R\}))$. 
     These pieces are connected since $\Pi$ is achronal in $\mathcal T$ and $\phi^{-1}(\Pi)$ is achronal in $M$. Therefore, 
     $c_1$ and $c_0$ are inextendible causal curves if not empty.
     if $c_1$ is non empty, then it intersects $\Pi$ since $D^+_{\mathcal T}(\Pi)=\{\tau\geq \tau_2,r<R\}$ and then $c_0$ is non empty.    
     Therefore, $c_0$ is always non empty.
     $c_1$ does not intersect $\Sigma$ and $c_0$ interests $\Sigma$ exactly once thus $c$ interests $\Sigma$ exactly once.      
      
     We obtain the following diagram of extensions by maximality of $M$ 
      $$\xymatrix{
	&M\ar@{=}[dr]&\\
	M_0\ar[ur] \ar[dr]&&M\\
	&M_2\ar[ur]&
      }$$
      where the arrows are Cauchy-embedding. Therefore, the connected component of $p$ in $\sing_0(M)$ is complete in the future and
      has a neighborhood isomorphic   to $\mathcal T$.

      \end{proof}
      
  \subsection{Smoothing Cauchy-surfaces in singular space-times}
	\label{subsec:smooth}
	
	  The question of the existence of a smooth Cauchy-surface of a regular globally hyperbolic manifold has been the object of many endeavours. 
	  Seifert \cite{MR1555366} was the first one to ask wether the existence of a $\C^0$ Cauchy-surface was equivalent to the existence of a $\C^1$ one, 
	  he gave an proof which turns out to be wrong.
	  Two recent proofs are considered (so far) to be correct : one of Bernal and Sanchez \cite{sanchez_smooth}  and another by Fathi and Siconolfi \cite{MR2887877} . We give their result in the case of $\E^{1,2}$-manifolds.
	  \begin{theo_ext}[\cite{sanchez_smooth} ]\label{sanchez}Let $M$ be a globally hyperbolic $\E^{1,2}$-manifold, then there exists a spacelike smooth Cauchy-surface of $M$.
	  \end{theo_ext} 

	  We apply their theorem to a globally hyperbolic flat singular spacetime. 
	  First we need to define what we mean by spacelike piecewise smooth Cauchy-surfaces. Recall that a smooth surface in $\E^{1,2}$
	  is {\it spacelike} if the restriction of the Lorentzian metric to its tangent plane is positive definite.
	  \begin{defi}\label{defi:sing_cauchy} Let $M$ be a globally hyperbolic $\E^{1,2}_A$-manifold and let $\Sigma$ be a Cauchy-surface of $M$. 
	  \begin{itemize}
	   \item  $\Sigma$ is  smooth (resp. piecewise smooth) if $\Sigma \cap Reg(M)$ is smooth (resp. piecewise smooth);
	   \item  $\Sigma$ is spacelike (piecewise) smooth $\Sigma \cap Reg(M)$ is (piecewise) smooth and spacelike. 
	  \end{itemize}
	  \end{defi}

	\begin{theo} \label{theo:smooth}  Let $M$ be a globally hyperbolic $\E^{1,2}_A$-manifold, then there exists a spacelike smooth  Cauchy-surface of $M$.
	\end{theo}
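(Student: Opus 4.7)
The strategy is to reduce to the regular Bernal--Sanchez Theorem \ref{sanchez} by exploiting the fact that, per Definition \ref{defi:sing_cauchy}, smoothness and spacelike-ness are only required on $\Sigma\cap Reg(M)$, whereas the Cauchy-surface condition is global. I would therefore construct a smooth function $\widetilde T\colon M\to\R$ that is smooth with timelike gradient on $Reg(M)$ and coincides with the vertical coordinate of a standard tubular neighborhood around each singular line, and then define $\Sigma:=\widetilde T^{-1}(0)$.

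First, I would build a continuous Cauchy time function $T$ on $M$ by the Geroch volume method, modified as in the paragraph preceding Theorem \ref{theo:geroch}: set $T(x)=\ln\bigl(\mu(I^-(x))/\mu(I^+(x))\bigr)$ with $\mu=\mu_3+\mu_1$, where $\mu_3$ is a finite absolutely continuous measure on $M$ and $\mu_1$ a finite one-dimensional measure on $\sing_0(M)$. The one-dimensional term guarantees $\mu(I^\pm(x))>0$ on the BTZ-lines so that $T$ is continuous on all of $M$ and has topological Cauchy-surface level sets. Next, for each singular line $\Delta\subset M$, pick a standard tubular neighborhood $\U_\Delta$ with coordinates $(\tau,r,\theta)$ (Definitions \ref{def:mass_part} and \ref{def:BTZ}); in each such tube $\tau$ is smooth, has timelike gradient on $Reg(M)$ and has spacelike level sets there. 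After shrinking so the tubes are pairwise disjoint, a radial cutoff interpolating between $T$ and $\tau$ produces a new continuous time function, still denoted $T$, that agrees with $\tau$ inside a smaller sub-tube $\U'_\Delta\subset\U_\Delta$.

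Then I would smooth $T$ on $Reg(M)\setminus\bigcup_\Delta\overline{\U'_\Delta}$ using the Bernal--Sanchez procedure of \cite{sanchez_smooth}: a partition of unity subordinate to precompact globally hyperbolic charts of $Reg(M)$ combined with local convolutions gives a smooth approximation whose gradient remains timelike provided the convolution scale is small with respect to the local causal cones. A cutoff equal to one outside $\U_\Delta$ and to zero on $\U'_\Delta$ ensures the smoothed function coincides with the already smooth $\tau$ on the sub-tubes. This yields the desired $\widetilde T$; its level set $\Sigma:=\widetilde T^{-1}(0)$ is automatically smooth and spacelike on $Reg(M)$.

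The main obstacle is verifying that $\Sigma$ is a Cauchy-surface of the full $M$, not merely of $Reg(M)$. An inextendible causal curve $c$ of $M$ either avoids every BTZ-line, in which case it lies entirely in the regular locus plus timelike particle lines and is handled by the regular Bernal--Sanchez argument, or else enters a BTZ-tube, where Lemma \ref{lem:past_BTZ} provides a decomposition $c=\Delta\cup c^0$ with $\Delta$ to the past of $c^0$; inside the tube $\widetilde T=\tau$ is monotone and unbounded along such a curve, producing a unique crossing with $\Sigma$. Global uniqueness follows from the achronality of each level set of the time function $\widetilde T$. A secondary difficulty is preserving the time-function property through both the radial interpolation and the convolution-smoothing: both require choosing scales small relative to the local causal cones, which is standard but needs care near the boundaries $\partial\U'_\Delta$ where the cutoffs are active.
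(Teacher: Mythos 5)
Your route is genuinely different from the paper's: you try to build a global smooth time function by splicing the model coordinate $\tau$ into a Geroch volume function and smoothing the rest, whereas the paper excises $J^{+}(\mathbb D_i^{+})\cup J^{-}(\mathbb D_i^{-})$ (resp.\ $J^{\pm}(p_j)$) around each singular line, applies Theorem \ref{sanchez} as a black box to the regular globally hyperbolic complement $M'$, and then proves that the resulting smooth Cauchy-surface $\Sigma_2$ of $M'$ closes up continuously at exactly one point per singular line and that its closure is still a Cauchy-surface of $M$. The paper's detour costs two extra steps (the projection argument showing $\Sigma_2\cup\{p_j\}$, resp.\ $\Sigma_2\cup\{q_i\}$, is a topological disc, and the causal decomposition $c=c^-\cup\Delta\cup c^1\cup c^+$), but it only ever uses Bernal--Sanchez as a plain existence statement on a regular spacetime.

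The gap in your version is the smoothing step. Theorem \ref{sanchez} (and \cite{sanchez_smooth}) asserts the \emph{existence} of a smooth spacelike Cauchy-surface; it is not a relative smoothing procedure that takes a given continuous time function, leaves it untouched on $\bigcup_\Delta\U'_\Delta$, and convolves it elsewhere into a function with timelike gradient. The Geroch function $T$ is merely continuous, so ``a smooth approximation whose gradient remains timelike provided the convolution scale is small'' is not something you can extract from the cited theorem --- controlling the gradient of a mollified continuous time function is precisely the hard content of Bernal--Sanchez's work, and their actual construction does not proceed by convolving $T$. The same problem recurs at the interpolation stage: a cutoff combination of the two time functions $T$ and $\tau$ on $\U_\Delta\setminus\U'_\Delta$ need not be increasing along causal curves, because the derivative of the cutoff along a causal curve can dominate; you flag this as ``needing care'' but it is exactly the point that must be proved, and nothing in the paper or in the cited references supplies it. (A secondary issue: you assume each singular line admits a single tubular chart $(\tau,r,\theta)$ along its whole length; the paper only establishes such tubes for future BTZ rays in Cauchy-maximal spacetimes, Proposition \ref{prop:tube}, so in a general globally hyperbolic $\E^{1,2}_A$-manifold you would have to work with local tube charts around the point where the surface is to cross the line, which you can do but have not set up.) Your causal analysis of why the level set would be a Cauchy-surface of all of $M$ is essentially sound and parallels Step 3 of the paper; the construction of the function whose level set you take is what is missing.
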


	\begin{proof}	
	  Let $\Sigma_1$ be a Cauchy surface of $M$. 
	  \begin{itemize}
	  \item[\underline{Step 1}]
	    Let $\left(\Delta_i\right)_{i\in\Lambda}$ be the connected components of $\sing_0(M)$. Each connected component is an inextendible causal curve 
	    intersecting  $\Sigma_1$ exactly once.

	    Let $p_i=\Delta_i\cap \Sigma_1$ for $i\in \Lambda_0$.
	    Let $i\in \Lambda_0$, consider $\U_i\simeq \{\tau \in [\tau^-_i,\tau^+_i], r\leq R_i\}$ a tube neighborhood of $p_i$. 
	    Let $\mathbb D_i^-= \{\tau=\tau^-_i, r\leq R\}$. The past set $I^-(\Sigma_1\cap \U_i)$ is an open neighborhood of the ray $J^-(p_i)\setminus \{p_i\}$. 
	    Therefore, noting $q_i=\mathbb D^-_i \cap J^-(p_i)$, the past of $\Sigma_1$ contains a neighborhood of $q_i$ in $\mathbb D_i$. Reducing $R_i$ if necessary, 
	    one can assume $\mathbb D_i^-\subset I^-(\Sigma_1)$ and reducing $R_i$ even more we can assume
	    $\mathbb D_i^+:=\{\tau=\tau_i^+,r\leq R_i\}\subset J^+(p_i)$. 
	    In the same way, we index the connected components of the set of massive particles by $\Lambda_{\text{mass}}$.
	    We have $\bigcup_{\alpha>0} \sing_{\alpha}(M) = \bigcup_{j\in \Lambda_{\text{mass}}} (\Delta_j)$ and $p_j=\Delta_j\cap \Sigma_1 $ for $j \in \Lambda_{\text{mass}}$. 
	    Since $\Lambda\cup \Lambda_{\text{mass}}$ is enumerable, one can construct $(U_{i_n})_{n\in \N}$ by induction such that  for all $n\in \N$, 
	    $\U_{i_{n+1}}$ is  disjoint from $J^{\pm}(\U_{i_{k}})$ for $k\leq n$. Then define 
	    $$N= \left( \bigcup _{i\in \Lambda }
	    J^+(\mathbb D^+_i ) 
	    \cup J^-(\mathbb D_i^-)\right)\cup
	    \left(\bigcup_{j\in \Lambda_{\text{mass}}}
	    J^+(p_j)\cup 
	    J^-(p_j)
	    \right)$$ 
	    and $$M'=Reg(M\setminus N). $$
	    The closed dimonds of $M'$ are compact, thus by Theorem \ref{theo:geroch}, $M'$ is a globally hyperbolic $\E^{1,2}$-manifold. 
	    Theorem \ref{sanchez} then ensures there exists a smooth Cauchy surface $\Sigma_2$ of $M'$.
	    We need to extend $\Sigma_2$ to get a Cauchy-surface of $M$.

	  \item[\underline{Step 2}] We write $\mathbb D_R$ the compact disc of radius $R$ in $\E^2$ and $\mathbb D_R^*:=\mathbb D_R\setminus\{0\}$.
	    Consider a massive particle point $p_j$ for some $j\in \Lambda_{\text{mass}}$ and a tube neighborhood $\U\simeq \{t \in [t^-,t^+], r\leq R\}$ of $p_j$ in $M$. 
	    We may assume the $t$ coordinate of $p_j$ to be $0$, $t^-=-t^+$ and $R=t^+$ so that $\{t=t^+\}$ is exactly the basis of the cone 
	    $J^+(p_j)$ in $\U$ and 
	    $\{t=t^-\}$ is exactly the basis of the cone $J^-(p_j)$ in $\U$. 
	    Consider the projection 
	    $$\fonction{\pi}{(\Sigma_2\cap \U)\cup \{p_j\} }{\mathbb D_R}{(t,r,\theta)}{(r,\theta)}$$
	    where $(t,r,\theta)$ are the cylindrical coordinates of $\U$. The projection $\pi$ is continuous. 
	    Notice that for $r_0\in ]0,R]$ and $\theta_0 \in \R/2\pi\Z$, 
	    the causal curves $\{r=r_0,\theta=\theta_0,t\in ]-r_0,r_0[\}$  are  inextendible in $M'$.
	    They thus intersect $\Sigma_2$ exactly once and $\pi$ is thus bijective. Let $(q_n)_{n\in \N}$ be a sequence of points of $\Sigma$
	    such that the $r$ coordinates tends to $0$. Writing $r_n $ and $t_n$ the $r$ and $t$ coordinates of $q_n$ for $n\in \N$, we have $|t_n|< r_n$
	    thus $q_n\rightarrow p_j$. Since $\U$ is compact, $\Sigma_2\cap \U\cap \{r\geq R_1\}$ is compact, it follows that $(\Sigma_2\cap \U)\cup \{p_j\}$ is compact. 	  
	    Then $\pi$ is a homeomorphism.% and $\pi_{|\Sigma_2\cap \U}^{|\mathbb D_R^*}$ is a diffeomorphism. 

	    Consider now a BTZ point $p_i$ for some $i\in \Lambda_0$ and a chart neighborhood of $p_i$ as in the first step. Again, the Cauchy-surface $\Sigma_2$ is trapped between $\mathbb D^+_i$ and $\mathbb D_i^-$, 
	    the projection $\pi:\Sigma_2\cap \U\rightarrow \mathbb D_R^*$ is bijective and open thus a homeomorphism. 
	    Write $\pi^{-1}=(\tau,Id)$
	    and let $(a_n)_{n\in \N},(b_n)_{n\in \N}$ be two sequences of points  of $\mathbb D_R^*$ which tend to $0$. By compactness, we can assume
	    $\left(\tau(a_n)\right)_{n\in \N}$ and $\left(\tau(b_n)\right)_{n\in\N}$ converge to some $\tau_a$ and $\tau_b$ respectively. If $\tau_a<\tau_b$ then 
	    for $n$ big enough $(\tau_a,0))\in I^-(\pi^{-1}(b_n))$ which is open. Therefore, there exists $n,m \in \N$ such that $\pi^{-1}(a_n) \in I^-(\pi^{-1}(b_m))$.
	    Since $\Sigma_2$  is acausal this is absurd and $\tau_a=\tau_b$. Then $\pi^{-1}$ can be extended to a homeomorphism 
	    $\mathbb D_R \rightarrow \left(\Sigma_2\cap \U\right) \cup \{q_i\}$ for some $q_i\in \Delta_i$. 
	    
	   Define $\Sigma=\Sigma_2\cup\{p_j: j \in \Lambda_{\text{mass}}\} \cup \{q_i : i \in \Lambda\}=\overline \Sigma_2$, it is a topological surface smooth on the regular part.

	 \item[\underline{Step 3}] We need to show $\Sigma_2$ is a Cauchy-surface of $M$. Let $c$ be a future causal inextendible curve in $M$.
	  Notice that $N$ can be decomposed 
	  
	  $$N=N^+\cup N^- \quad \text{where} \quad 
	    N^\pm = \bigcup_{i\in \Lambda_0}J^\pm(\mathbb D_i^\pm) \bigcup _{j\in \Lambda_{\text{mass}}} J^\pm(\{p_j\})   
	   $$
	  is the future complete part (the union of the $J^+$ parts) 
	  and $N^-$ is the past complete part (the union of the $J^-$ parts). A future causal curve entering $N^+$  cannot leave $N^+$ and a future causal curve leaving $N^-$ cannot re-enter
	  $N^-$. Therefore, $c$ is decomposed into three connected pieces $c=c^-\cup c^0\cup c^+ $, the pieces $c^-,c^0,c^+$ being in $N^-$, $M\setminus N$ and $N^+$ respectively.

	  \begin{itemize}
	    \item If $c^0=\emptyset$ then $c\subset n N$ thus $c\cap \Sigma_1\subset N\cap \Sigma=\bigcup_{j\in \Lambda_{\text{mass}}} \{p_j \} $. 
	    The curve $c$ can then only intersect $\Sigma$ at a massive particle point, but such points of $\Sigma$ are in $\Sigma_1$ which is a Cauchy-surface of $M$.
	    Therefore, $c$ intersects $\Sigma$ exactly once at $p_j$ for some $j\in \Lambda_{\text{mass}}$. 
	    \item If $c^0\neq \emptyset$, then by Lemma \ref{lem:BTZ_causal_curve}, $c^0$ decomposes into a BTZ part $\Delta$
	    and a non BTZ-part $c^1$ with $\Delta$ in the past of $c^1$. Then $c=c^-\cup \Delta \cup c^1 \cup c^+$.
	      \begin{itemize}
	      \item If $c^1\neq \emptyset$, then $c^1$ is an inextendible causal curve  in $M'$ and thus intersect $\Sigma_2$, thus 
	      $\Sigma$, exactly once. If $q_i\in\Delta$ for some $i\in \Lambda_0$, then $q_i\in J^-_M(\Sigma_2)\setminus \Sigma_2 = I^-(\Sigma_2)$. 
	      However, $I^-(\Sigma_2)$ is open and $q_i\in \overline{\Sigma_2}$, thus $I^-(\Sigma_2)\cap \Sigma_2\neq \emptyset$ which is absurd since $\Sigma_2$ is acausal in $M'$.
	      Thus $c\cap \Sigma=c^1\cap \Sigma$ which is a singleton.
	      \item If $c^1=\emptyset$, then $\Delta$ is inextendible and thus a connected component of $\sing_0(M\setminus N)$. Such a connected component contains exactly 
	      one of the $(q_i)_{i\in \Lambda} $ thus $c\cap \Sigma=\Delta\cap \Sigma$ is a singleton.
	    \end{itemize}
	 \end{itemize}
	  \end{itemize}
	\end{proof}

	\begin{lem}\label{lem:local_param} Let $\Sigma$ a picewise spacelike Cauchy-surface of $M$ and write $\mathbb D_R=\{r\leq R\}\subset \E^2$.  
	For all $p\in \Sigma$, there exists a tube neighborhood 
	$\U\simeq \T\subset \E^{1,2}_\alpha$ of $p$ such that 
	\begin{itemize}
	 \item $\T=\{\tau \in [\tau_1,\tau_2],r\leq R\}$ if $\alpha=0$ ;
	 \item  $\T=\{t \in [t_1,t_2],r\leq R\}$ if $\alpha>0$ ; 
	 \item $\Sigma\cap \U=\{(f(r,\theta),r,\theta) : (r,\theta) \in \mathbb D_R\}$ for some $f:\mathbb D_R\rightarrow \R$ which 
	 is  piecewise smooth on $\mathbb D_R^*$.
	\end{itemize}
	\end{lem}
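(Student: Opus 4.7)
The plan is to choose an adapted tube chart around $p$ and show that, after possibly shrinking, $\Sigma$ is exactly a graph over the disk $\mathbb{D}_R$. The three cases ($p$ regular, $p\in\sing_\alpha(M)$ with $\alpha>0$, $p\in\sing_0(M)$) are handled by the same scheme, with the BTZ case requiring the most care. First, I would choose a chart identifying a neighborhood of $p$ with an open tube $\{t\in(t_1^0,t_2^0),\ r<R_0\}\subset\E^{1,2}_\alpha$ (with $\tau$ in place of $t$ when $\alpha=0$), normalized so that $p$ lands at the origin of the cylindrical coordinates. The vertical lines $\{(s,r_0,\theta_0):s\in\R\}$ are timelike in the regular and massive cases and lightlike in the BTZ case (since $\partial_\tau$ is null for the BTZ metric), so in all three settings they are causal curves that lie in the regular locus as soon as $r_0>0$.

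The projection $\pi:\Sigma\cap\U\to\mathbb{D}_{R_0}$, $(t,r,\theta)\mapsto(r,\theta)$, is injective by acausality of $\Sigma$: two points sharing $(r,\theta)$ coordinates would lie on a single causal vertical segment. For local openness and smoothness, the piecewise spacelike hypothesis together with the fact that a spacelike plane in $\E^{1,2}_\alpha$ never contains a causal direction shows that $\pi$ has everywhere invertible differential on each smooth piece of $\Sigma\cap Reg(\U)$; the implicit function theorem then gives a local graph representation $t=f(r,\theta)$ with $f$ smooth on each piece of $\mathbb{D}_R^*$. Combined with injectivity this makes $\pi$ a homeomorphism onto an open subset of $\mathbb{D}_{R_0}$, and the value $f(0,\cdot)=0$ is forced by the fact that the inextendible central/singular line through $p$ meets $\Sigma$ only at $p$ (by Lemma \ref{lem:past_BTZ} when $\alpha=0$, and by the Cauchy-surface property when $\alpha>0$). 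To upgrade the image to the full disk $\mathbb{D}_R$, I shrink the tube so that the top face $\{t=t_2,\ r\leq R\}$ lies in $I^+(\Sigma)$ and the bottom face in $I^-(\Sigma)$; each causal vertical segment then starts in $I^-(\Sigma)$ and ends in $I^+(\Sigma)$, hence meets $\Sigma$ at some point, which is unique by injectivity, and this extends $f$ to all of $\mathbb{D}_R$.

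The separation of the top and bottom faces is immediate in the regular and massive cases from the inclusions $I^\pm(p)\subset I^\pm(\Sigma)$ (which hold because $p\in\Sigma$ and $\Sigma$ is acausal), together with the fact that, for a sufficiently small tube, the top and bottom faces sit inside $I^\pm(p)$. The main obstacle is the BTZ case: by Lemma \ref{lem:BTZ_causal_curve} any future timelike curve has strictly increasing $r$-coordinate and therefore cannot terminate at a BTZ singular point, giving $I^-(p)=\emptyset$, so the naive argument fails for the bottom face. Here I would instead show that $I^-(\Sigma\cap\U)$ is an open neighborhood of the past singular ray $J^-(p)\setminus\{p\}$: from any point $(\tau_q,0,\ast)$ with $\tau_q<0$, the vector $(1,\epsilon,0)$ is future timelike for $0<\epsilon<2$ (its norm being $\epsilon(\epsilon-2)<0$), and the corresponding short timelike segment enters the regular part and, using the already established local graph of $\Sigma$ near $p$ together with the continuity of $f$, meets $\Sigma\cap\U$. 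Openness of $I^-(\Sigma\cap\U)$ then pulls the whole bottom face into $I^-(\Sigma)$ once $R$ and $|\tau_1|$ are adjusted, closing the argument.
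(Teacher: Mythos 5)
Your proof is correct and follows essentially the same route as the paper, whose own proof simply defers to Steps 1 and 2 of Theorem \ref{theo:smooth}: project $\Sigma$ along the vertical causal lines of a tube chart, get injectivity from acausality of $\Sigma$, piecewise smoothness from transversality of those causal lines to the spacelike pieces, and surjectivity onto the full disc by trapping $\Sigma$ between the top and bottom faces of the tube. In particular you correctly isolate the one delicate point — that $I^-(p)=\emptyset$ for a BTZ point, so the bottom face must instead be pushed into $I^-(\Sigma\cap\U)$ by showing the past singular ray lies in that open set — which is exactly the move made in Step 1 of the paper's proof of Theorem \ref{theo:smooth}.
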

	\begin{proof}
	  Steps 1 and 2 of the proof of Theorem \ref{theo:smooth} give a continuous parametrisation. 
	  The parametrisation is piecewise smooth since the projection is along lightlike line or timelike line which are transverse 
	  to the spacelike Cauchy-surface.
	\end{proof}
	The Riemann metric on $Reg(\Sigma)$ induces a length space structure on $Reg(\Sigma)$ and a distance function on $Reg(\Sigma)\times Reg{\Sigma}$.
	In the next proposition, we extend this length space structure on the whole $\Sigma$ by proving $C^1_{pw}$ curve   to the whole $\Sigma$
	 \begin{prop}
	  Let $M$ be a globally hyperbolic $\E^{1,2}_A$-manifold and let $\Sigma$ a piecewise smooth spacelike Cauchy-surface. 
	  Then the distance function on $Reg(\Sigma)\times Reg{\Sigma}$ extends continuously to $\Sigma\times \Sigma$.
	 \end{prop}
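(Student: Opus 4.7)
The plan is to define the extension $\hat d$ as a limit of the intrinsic distance along sequences of regular points, and to reduce continuity to the single statement that the intrinsic $d$-diameter of small neighborhoods of a singular point of $\Sigma$ tends to zero. Everything is local near singular points of $\Sigma$, so the key input is Lemma \ref{lem:local_param}.

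Near each singular point $p\in\Sigma$, Lemma \ref{lem:local_param} presents $\Sigma\cap\U$ as the graph of a piecewise smooth function $f:\mathbb{D}_R\to\R$, continuous up to $r=0$ where it equals the $t$- or $\tau$-coordinate of $p$. A direct computation of the induced metric in the coordinates $(r,\theta)$ yields, for a massive-particle singularity ($\alpha>0$),
\[ g=(1-f_r^2)\,dr^2-2f_rf_\theta\,dr\,d\theta+\Bigl(\tfrac{\alpha}{2\pi}r^2-f_\theta^2\Bigr)d\theta^2, \]
and, for a BTZ singularity ($\alpha=0$),
\[ g=(1-2f_r)\,dr^2-2f_\theta\,dr\,d\theta+r^2\,d\theta^2. \]
Positive-definiteness of $g$ forces $f_r^2<1$, respectively $f_r<1/2$, together with $f_\theta^2$ bounded by a constant multiple of $r^2(1-f_r^2)$, respectively $r^2(1-2f_r)$, so all coefficients are under control on $\mathbb{D}_R\setminus\{0\}$.

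The next step is to bound the intrinsic $d$-diameter of the punctured disc $V_\delta:=\bigl(\Sigma\cap\{0<r<\delta\}\bigr)\cap Reg(\Sigma)$. Any two points of $V_\delta$ can be joined in $V_\delta$ by first going radially to the circle $\{r=\delta\}$ and then along an arc of that circle. A radial segment from $r_1$ to $r_2$ has length $\int_{r_1}^{r_2}\sqrt{g_{rr}}\,ds$; for a massive particle this is at most $r_2-r_1$, and for BTZ, Cauchy--Schwarz together with the uniform bound $f\in[\tau_1,\tau_2]$ on the tube gives
\[ \int_{r_1}^{r_2}\sqrt{1-2f_r}\,ds\leq\sqrt{(r_2-r_1)\bigl(r_2-r_1+2(\tau_2-\tau_1)\bigr)}. \]
A circular arc at radius $r_0\leq\delta$ has length at most $\sqrt{2\pi\alpha}\,r_0$, respectively $2\pi r_0$. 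Summing these three contributions gives $\operatorname{diam}_d(V_\delta)\to 0$ as $\delta\to 0$ for every singular point of $\Sigma$.

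It remains to set $\hat d(x,y):=\lim_n d(x_n,y_n)$ for any sequences $x_n\to x$, $y_n\to y$ in $Reg(\Sigma)$. Existence of the limit is the Cauchy criterion
\[ |d(x_n,y_n)-d(x_m,y_m)|\leq d(x_n,x_m)+d(y_n,y_m), \]
whose right-hand side tends to $0$ by the diameter bound at each limit point (be it regular, where $d$ is already continuous, or singular, where the sequences are eventually trapped in an arbitrarily small $V_\delta$); independence of the chosen sequences follows by the same inequality applied to two competing sequences, and $\hat d=d$ on $Reg(\Sigma)\times Reg(\Sigma)$ by taking constant sequences. Passing to the limit in the triangle inequality gives $|\hat d(x,y)-\hat d(x',y')|\leq\hat d(x,x')+\hat d(y,y')$, and $\hat d(x_n,x)\to 0$ whenever $x_n\to x$ in $\Sigma$ by the diameter bound again; these two facts yield continuity of $\hat d$ on $\Sigma\times\Sigma$. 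The only delicate point in the argument is the BTZ radial estimate: the spacelike condition controls $1-2f_r$ from below, not $f_r$ in absolute value, so the radial integrand need not be small pointwise and the Cauchy--Schwarz trick combined with the a priori oscillation bound on $f$ furnished by the compact tube is essential.
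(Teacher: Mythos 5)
Your proof is correct and follows essentially the same route as the paper: both reduce to the local graph parametrisation of Lemma \ref{lem:local_param} and use the spacelike condition to bound the length of curves reaching the singular point, with the BTZ case handled by an integral rather than pointwise bound (your Cauchy--Schwarz step plays the same role as the paper's inequality $\sqrt{2-2\partial_r\tau}\leq 2-2\partial_r\tau$, both converting the uncontrolled integrand into the bounded oscillation of $\tau$ on the compact tube). You additionally spell out the angular arc estimate and the formal extension-by-limits, details the paper leaves implicit.
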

	\begin{proof}
	  We just have to prove it in the neighborhood of a singular point. There are two cases wether the singular point is 
	  BTZ or massive.
	  Let $p \in \Sigma\cap \sing_0$, consider a local parametrisation given by Lemma \ref{lem:local_param} by a disc of radius $R>0$ in a compact tube neighborhood $\U$ of $p$.
	  Take a curve on  $\Sigma\cap \U$,   $c=(\tau(r), r, \theta_0)$ for $r\in[R,0]$, it is absolutely continuous on $[R,0[$. Since 
	    $\Sigma$ is spacelike using the metric of BTZ model space given in Definition \ref{def:BTZ}, 
	    we have $1-2\frac{\partial \tau}{\partial r}\geq 0$ almost everywhere and 
	    \begin{eqnarray}
	    {length} (c)&=&\int_{0}^R  \sqrt{1-2\frac{\partial \tau}{\partial r} }\d r \\
	    &\leq & \int_{0}^R  \sqrt{1+1-2\frac{\partial \tau}{\partial r} }\d r\\
	    &\leq&\int_{0}^R  \left(2-2\frac{\partial \tau}{\partial r} \right)\d r\\
	    &\leq & 2R-2\left(\tau(R)-\tau(0)\right)
	    \end{eqnarray} 
	    Then the distance induced by the Riemann metric on $Reg(\Sigma)$  extends continuously to $p$.
	    
	    Let $p \in \Sigma\cap \sing_\alpha$ for some $\alpha>0$, consider a local parametrisation given by Lemma \ref{lem:local_param} by a disc of radius $R>0$ in a compact tube neighborhood $\U$ of $p$.  
	    Take a curve on  $\Sigma\cap \U$,   $c=(t(r), r, \theta_0)$ for $r\in[R,0]$, it is absolutely continuous  on $[R,0[$. Since 
	    $\Sigma$ is spacelike using the metric of massive particle model space given in \ref{def:mass_part}, $1-\left(\frac{\partial t}{\partial r}\right)^2\geq 0$ and 
	    \begin{eqnarray}
	    {length} (c)&=&\int_{0}^R  \sqrt{1-\left(\frac{\partial t}{\partial r}\right)^2 }\d r \\
	    &\leq & R
	    \end{eqnarray}
	    Then the distance induced by the Riemann metric on $Reg(\Sigma)$ extends continuously to $p$.
	\end{proof}

	  \begin{defi} Let  $M$ be a globally hyperbolic $\E^{1,2}_A$-manifold, $M$ is Cauchy-complete if there exists a 
	 piecewise smooth spacelike Cauchy-surface (in the sense of definition \ref{defi:sing_cauchy}) which is complete as metric space.	   
	  \end{defi}
	  
	 \begin{rem} Fathi and Siconolfi in \cite{MR2887877} proved a smoothing theorem applicable in a wider context than 
	 the one of semi-riemannian manifolds.  
	 However their result does not apply naively
	 to our singularities. Consider $M$ a differentiable manifold, their starting point is a {\it continuous cone field} i.e. 
	 a continuous choice of cones in $T_x M$ for $x\in M$. 
	 If one start from a spacetime using our definition, a natural cone field associate to each point $x$ the set of future pointing causal vectors from $x$. 
	 However, as shown on figure \ref{fig:cone_BTZ} this cone field is \textbf{discontinuous} at every singular points!
	 	\begin{figure}[H]
	\begin{center}
	\begin{tabular}{c|c}
	 \includegraphics[width=0.45\linewidth]{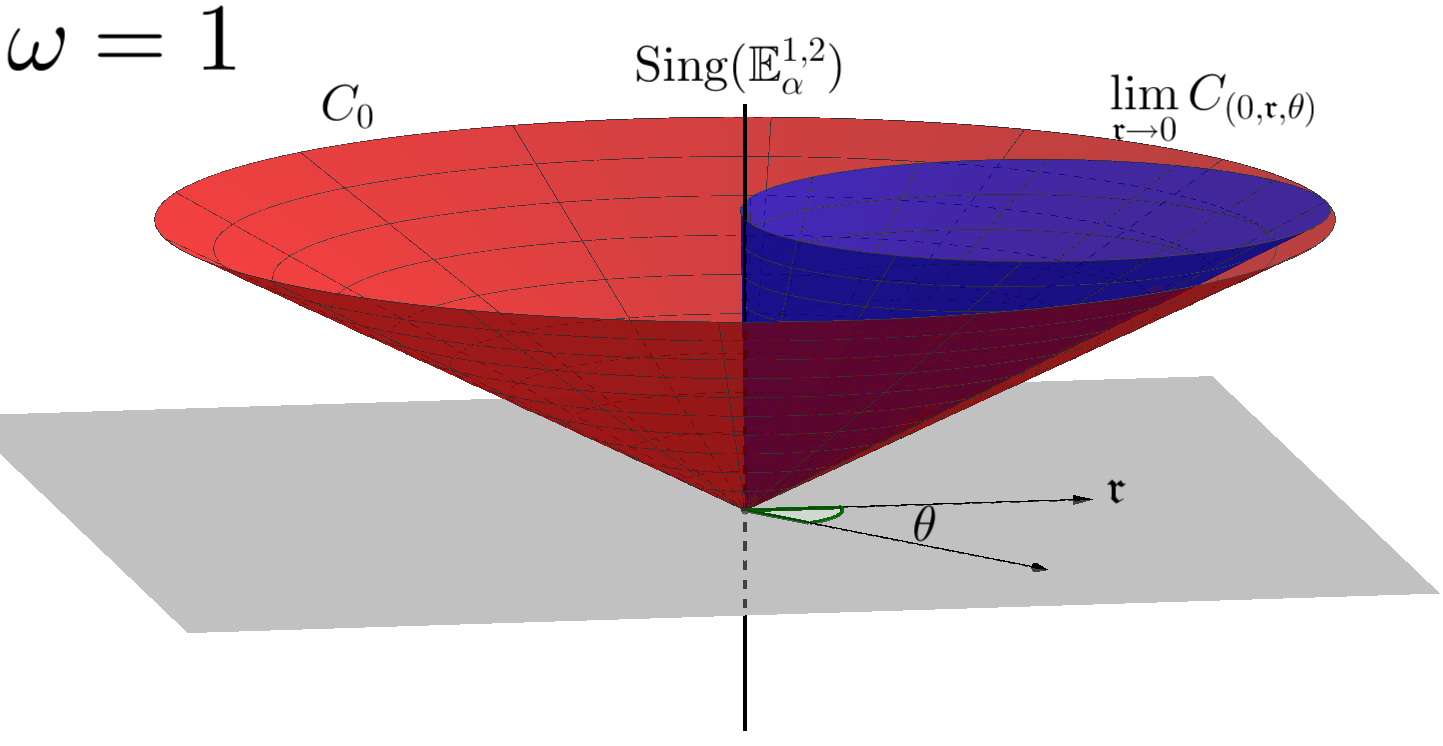} & \includegraphics[width=0.45\linewidth]{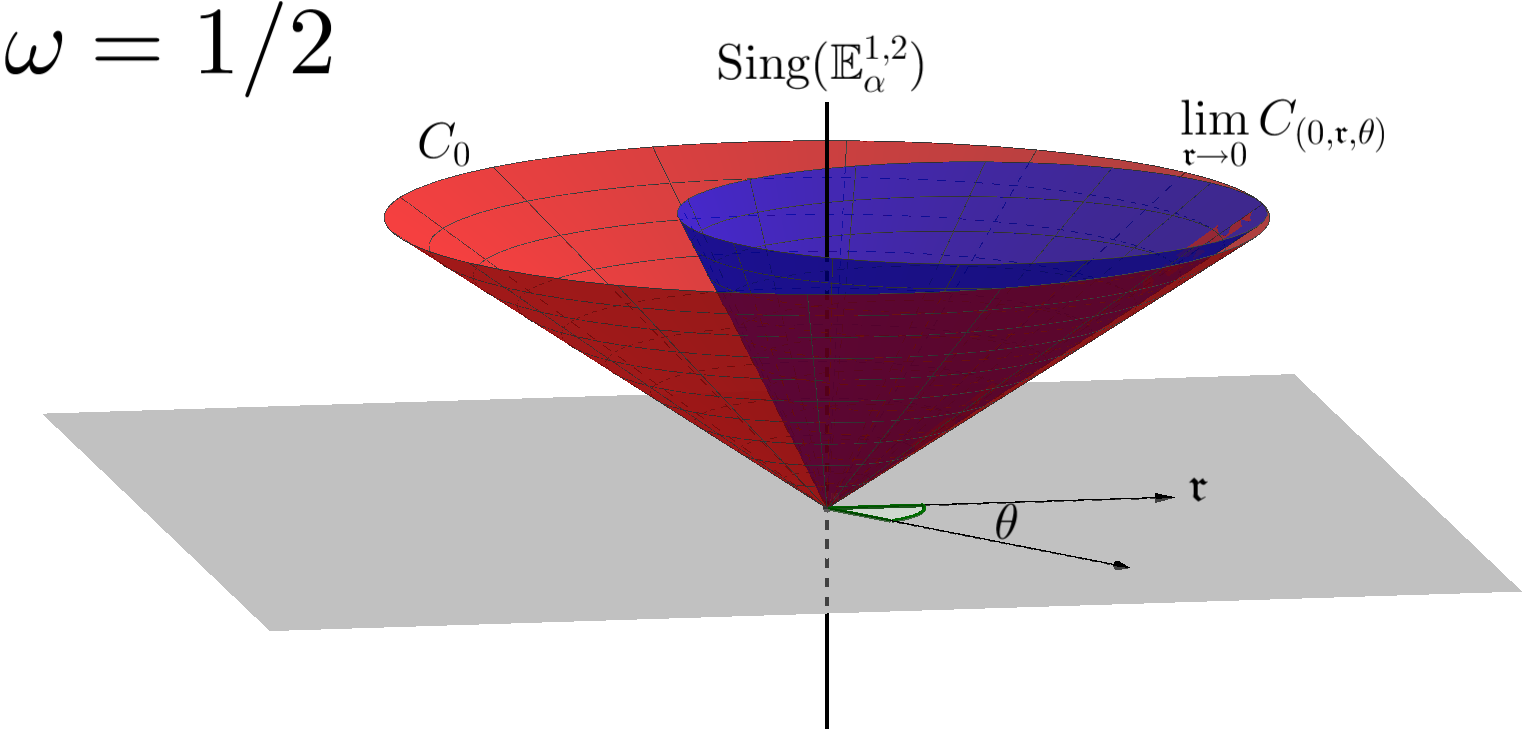}  \\
	 \includegraphics[width=0.45\linewidth]{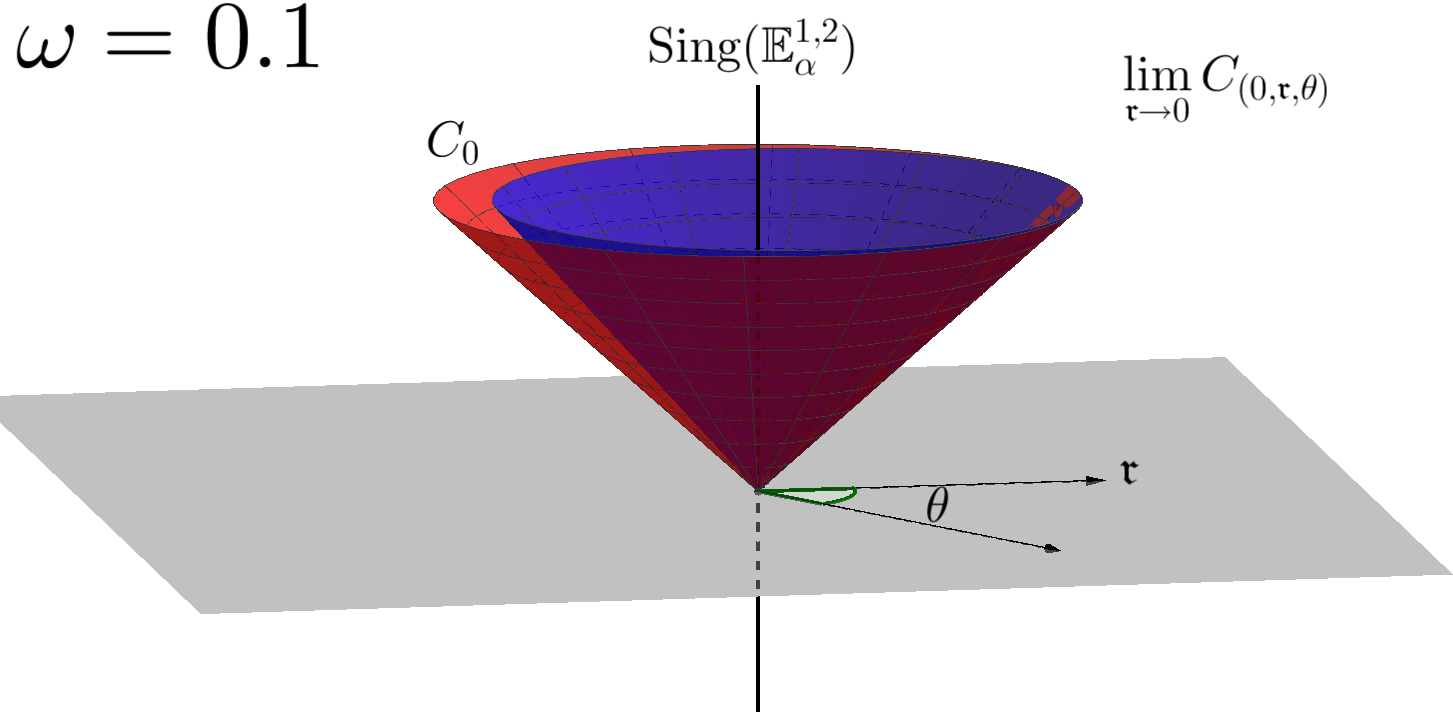}  & \includegraphics[width=0.45\linewidth]{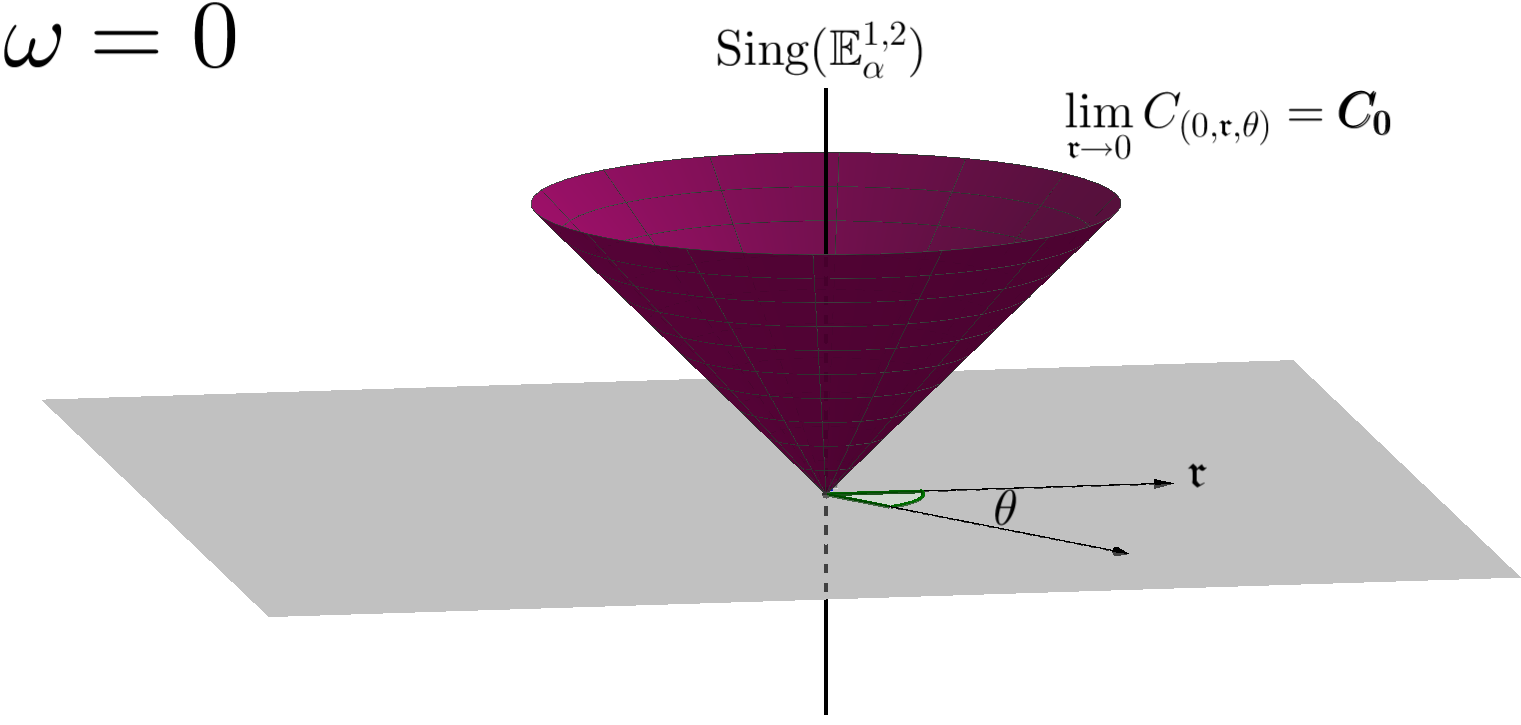} 
	\end{tabular}

						 \end{center}
	  \begin{caption}{Futur Cones and limit futur cones in the tangent plane of a singular point.}\label{fig:cone_BTZ}
	We draw cones of future pointing causal vectors in $T_{p}\E^{1,2}_\alpha$ with $\alpha= 2\pi \sqrt{1-\omega^2}$ and $p\in \sing(\E^{1,2}_\alpha)$ in the 
	$(\tau,\mathfrak{r},\theta)$ coordinates. 
	The red cone $C_0$ represents the cone of future pointing causal vectors at $p$ and the blue cone represents the radial limit toward $p$ of the 
	cone of future pointing causal vectors. When $\omega=0$ (i.e. $\alpha=2\pi$), there the vertical line is not singular anymore and the red and blue cones 
	blend.
	  \end{caption}

	\end{figure}

	 It would be possible to construct a  continous cone field wich contains the one of future causal vectors. Providing this new cone field is globally hyperbolic 
	 (in the sense of Fathi and Siconolfi)
	 one could then apply the smoothing theorem
	 and recover an everywhere smooth Cauchy-surface. This procedure might be slightly simpler and is 
	 much stronger since it allows to control both the position and the tangent plane of the Cauchy-surface. 
	 We didn't write this here since we also needed Lemma \ref{lem:local_param} and we should have written the first two steps of the  presented method 
	 anyway. Later on in this paper, we will need to control Cauchy-completeness of Cauchy-surfaces which presents, to our knowledge, 
	 the same difficulties using either Fathi-Siconolfi theorem or our method.
	 
	 Still, it would be nice to have an extended Fathi-Siconolfi theorem which directly applies. This would require to weaken the continuity hypothesis
	 to some semi-continuity hypothesis which seems reasonable considering the methods they used. 
	\end{rem}

\section{Catching BTZ-lines : BTZ-extensions}  \label{sec:catch_BTZ}
      In the example of the modular group presented in the Introduction, we added BTZ-lines 
      to a Cauchy-maximal spacetime. Therefore, the usual Cauchy-maximal extension theorem
      doesn't catch them. Since we want to add BTZ-lines to some given manifold whenever it is possible,   
      we define BTZ-extensions and prove a corresponding maximal BTZ-extension theorem.
      \subsection{BTZ-extensions, definition and properties} \label{sec:extension_BTZ_1}
      Consider the regular part of $\E^{1,2}_0$. It is a Cauchy-maximal globally hyperbolic $\E^{1,2}$-manifold and should
      be naturally extended into $\E^{1,2}_0$. To get this we need new extensions. 
      Let  $A\subset \R_+$ be a subset containing $0$. 
      \begin{defi}[BTZ-embedding, BTZ-extension] 
       Let $M_1,M_2$ be two globally hyperbolic $\E^{1,2}_A$ manifolds and $\phi: M_1 \rightarrow M_2$ a morphism of $\E^{1,2}_A$-structure.
       
       If $\phi$ is injective and the complement of its image in $M_2$ is a union (possibly empty) of BTZ lines then  
       $\phi$ is a BTZ-embedding and $M_2$ is a BTZ-extension of $M_1$. 
      \end{defi}
     
      The following lemmas ensure that two BTZ lines cannot be joined via an extension and that the BTZ-lines cannot be completed 
      in the future via BTZ-extensions.

  \begin{lem}\label{lem:separation_BTZ} Let $M_1$ and $M_2$ be two globally hyperbolic $\E^{1,2}_A$-manifolds, and $M_1 \xrightarrow{\phi} M_2$
  a BTZ extension. Let $p,q \in \sing_0(M_1)$, if $p$ and $q$ are in the same connected component of $\sing_0(M_2)$ then they 
  are in the same connected component of $\sing_0(M_1)$.
  \end{lem}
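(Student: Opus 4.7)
The plan is to reduce the statement to the observation that the connected component $\Delta$ of $\sing_0(M_2)$ containing $\phi(p)$ and $\phi(q)$ lies entirely inside $\phi(M_1)$; the desired connection between $p$ and $q$ is then obtained by pulling $\Delta$ back through $\phi^{-1}$.

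The first step is to verify that $\phi$ sends $\sing_0(M_1)$ into $\sing_0(M_2)$ and that it is a local homeomorphism at BTZ points. Around a BTZ point $p\in \sing_0(M_1)$ I take a chart $\phi_1:\U_1\to \V_1$ with $\phi_1(p)\in \sing(\BTZ)$, and a chart $\phi_2:\U_2\to \V_2\subset \E^{1,2}_\beta$ around $\phi(p)$. Since the restriction of $\phi$ to $Reg(M_1)$ is by assumption an injective $\E^{1,2}$-morphism, Proposition \ref{prop:isom} applies to $\phi_2\circ \phi\circ \phi_1^{-1}$ with $\alpha=0\neq 2\pi$, forcing $\beta=0$ and realising this composition as the restriction of an element of $\isom(\BTZ)$. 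In particular $\phi(p)\in \sing_0(M_2)$, and $\phi$ is a local homeomorphism at $p$; combined with the global injectivity of $\phi$, this makes $\phi$ a homeomorphism onto its open image $\phi(M_1)$ with continuous inverse.

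The second step invokes the definition of BTZ-extension, namely that $M_2\setminus \phi(M_1)$ is a disjoint union of entire connected components of $\sing_0(M_2)$. Letting $\Delta$ be the component of $\sing_0(M_2)$ containing $\phi(p)$ and $\phi(q)$, the inclusion $\phi(p)\in \Delta\cap \phi(M_1)$ prevents $\Delta$ from being one of the removed components, so $\Delta\subset \phi(M_1)$. The preimage $\phi^{-1}(\Delta)$ is then homeomorphic to $\Delta$ and hence connected, it lies in $\sing_0(M_1)$ by the first step, and contains both $p$ and $q$; it therefore sits inside a single connected component of $\sing_0(M_1)$, which must contain both endpoints.

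I do not expect a serious obstacle. The technical heart is the first step and is handled locally by the rigidity of Proposition \ref{prop:isom}; once it is in place the global conclusion is essentially forced, because the definition of BTZ-extension forbids the image $\phi(M_1)$ from excising a middle piece of any BTZ line of $M_2$. In this sense the lemma is the sanity check that the definition also rules out ``bridging'' two distinct connected components of $\sing_0(M_1)$.
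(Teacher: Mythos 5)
Your Step 1 (types of singular points are preserved and $\phi$ is an open embedding, via Proposition \ref{prop:isom}) is correct, and the paper uses it implicitly. The gap is in Step 2: you read the definition of a BTZ-extension as saying that $M_2\setminus\phi(M_1)$ is a union of \emph{entire connected components} of $\sing_0(M_2)$, and your whole argument rests on that. The operative content of the definition is only that the complement of the image consists of points of type $\E^{1,2}_0$, i.e.\ $M_2\setminus\phi(M_1)\subset\sing_0(M_2)$; it does not grant that a component of $\sing_0(M_2)$ meeting $\phi(M_1)$ lies entirely inside $\phi(M_1)$. That stronger statement is precisely what this lemma (together with Lemma \ref{lem:descente_BTZ}) is designed to establish: the paper introduces the pair to show that a BTZ-extension can neither bridge two singular lines of $M_1$ nor complete a line in the future. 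Under your reading both lemmas are vacuous, and the theory of Section \ref{sec:extension_BTZ_2} would break, since it needs extensions that adjoin only a past portion of a line (for instance $\E^{1,2}_0$ is a BTZ-extension of the globally hyperbolic spacetime $\E^{1,2}_0\setminus\,]-\infty,p]$, whose complement is a proper sub-ray of the singular line, not a full component). So Step 2 assumes the conclusion.

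What is actually needed, and what your argument never invokes, is global hyperbolicity. Since $\phi(M_1)$ is open, $\Delta\cap M_1$ is open in the component $\Delta$; the content of the lemma is that it cannot have a ``hole'' between $p$ and $q$, and this is exactly the kind of pathology (e.g.\ deleting a closed middle segment of the singular line of $\BTZ$) that compactness of closed diamonds rules out. The paper's proof runs as follows: $\Delta$ is an inextendible causal curve (Lemma \ref{lem:past_BTZ}), so one may assume $p\in J^-_{M_2}(q)$; take a tube neighborhood of the compact segment $[p,q]$ and a regular point $q'\in I^+(q)$ inside it. The open diamond $I^+(p)\cap I^-(q')$ avoids $\sing_0(M_2)$, hence lies in $M_1$; its closure in $M_2$ contains $[p,q]$ and is contained in $J^+_{M_1}(p)\cap J^-_{M_1}(q')$, which is compact by global hyperbolicity of $M_1$ and therefore closed in $M_2$. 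Hence $[p,q]\subset\sing_0(M_1)$, which connects $p$ to $q$ in $\sing_0(M_1)$. Some such compactness argument must replace your Step 2.
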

  \begin{proof}
  The connected component of $p$ in $\sing_0(M_2)$ is an inextendible causal curve we note  $\Delta$. 
  We may assume $p\in J^-_{M_2}(q)$. Since every point of $[p,q]$
  is locally modeled on $\E^{1,2}_0$, we can construct a tube neighborhood of $[p,q]$ of some radius $R$. 
  Take some regular point $q'$ in the chronological future of $q$ in the tube neighborhood. The diamond $J^-_{M_1}(q')\cap J^+_{M_1}(p)$ 
  is compact, thus  $[p,q]\subset \overline {I^+(p)\cap I^-(q')}\subset J^-_{M_1}(q')\cap J^+_{M_1}(p)\subset M_1$. 
  
  \end{proof}

   \begin{lem}\label{lem:descente_BTZ}  Let $M_1$ and $M_2$ be two globally hyperbolic $\E^{1,2}_A$-manifolds, and $M_1 \xrightarrow{\phi} M_2$
  a BTZ extension. Let $p,q \in \sing_0(M_2)$. 
  
  If $p\in J^-_{M_2}(q)$  and $p\in M_1$ then $q \in M_1$.
    
   \end{lem}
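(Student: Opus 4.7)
The plan is to reduce to showing that $q$ lies on the BTZ line of $p$ within $\phi(M_1)$, and then to derive a contradiction via a compactness argument in $M_1$. By Lemma~\ref{lem:past_BTZ} applied in $M_2$, any causal curve from $p$ to $q$ decomposes into a BTZ part $\Delta$ and a regular part $c^0$ with $\Delta$ in the past of $c^0$; since $p,q\in\sing_0(M_2)$ and $\Delta$ is connected, the whole curve in fact lies on a single connected component $\Delta_2$ of $\sing_0(M_2)$. I parametrize $\Delta_2$ by $\sigma_2:(-A,B)\to M_2$ with $\sigma_2(0)=p$ and $q=\sigma_2(t_q)$, $t_q\geq 0$ (the case $t_q=0$ giving $q=p\in M_1$ directly). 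The connected component $\Delta_1\subset \sing_0(M_1)$ containing $p$ is an inextendible causal curve in $M_1$ homeomorphic to $\R$, and $\phi$ maps it homeomorphically onto a connected open sub-$1$-manifold of $\Delta_2$, i.e.\ an open interval $\sigma_2((a_-,b))$ with $a_-<0<b\leq B$. Identifying $M_1$ with $\phi(M_1)$, the conclusion $q\in M_1$ amounts to $t_q<b$, so I argue by contradiction and assume $t_q\geq b$; then $b<B$ and $\sigma_2(b)\in M_2\setminus M_1$.

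The central observation is that a BTZ-extension only adds BTZ points: $M_2\setminus M_1\subset \sing_0(M_2)$ by definition, so $Reg(M_2)\subset M_1$. The idea is to construct a regular test point $w\in M_1$ slightly in the future of $\sigma_2(b)$ together with explicit future timelike curves from $\sigma_2(t)$ to $w$, in order to trap $\sigma_1(t):=\phi^{-1}(\sigma_2(t))$ inside a compact diamond of $M_1$ as $t\nearrow b$. Concretely, pick a BTZ tube chart $\mathcal T\subset M_2$ around $\sigma_2(b)$ identifying $\mathcal T$ with $\{(\tau,r,\theta):\tau\in(-\epsilon,\epsilon),\,r<R\}\subset\BTZ$ and sending $\sigma_2(b)$ to the origin. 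Fix $\delta\in(0,\epsilon)$, $\rho_0\in(0,\min(R,2\delta))$, $\theta_0\in\R/2\pi\Z$, and set $w=(\delta,\rho_0,\theta_0)\in \mathcal T$; then $w$ is regular, hence $w\in M_1$. For each $t\in(\max(0,b-\epsilon),b)$, let $\mu_t:=\rho_0/(\delta+b-t)$; since $\rho_0<2\delta<2(\delta+b-t)$, one has $\mu_t\in(0,2)$, so the curve $\gamma_t(s)=(t-b+s,\,s\mu_t,\,\theta_0)$ for $s\in[0,\delta+b-t]$ has tangent $(1,\mu_t,0)$ of negative norm $\mu_t(\mu_t-2)$ in the BTZ metric $-2\,\d\tau\,\d r+\d r^2+r^2\d\theta^2$. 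Hence $\gamma_t$ is timelike, and future-directed by Lemma~\ref{lem:BTZ_causal_curve} since its $r$-coordinate strictly increases. Moreover $\gamma_t(s)$ is regular for $s>0$, so $\gamma_t\subset\{\sigma_2(t)\}\cup Reg(M_2)\subset M_1$, and pulls back via $\phi^{-1}$ to a future causal curve in $M_1$ from $\sigma_1(t)$ to $\phi^{-1}(w)$.

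The contradiction now follows. For every $t\in(\max(0,b-\epsilon),b)$, $\sigma_1(t)\in J^+_{M_1}(p)\cap J^-_{M_1}(\phi^{-1}(w))=\overline{\Diamond_p^{\phi^{-1}(w)}}$, which is compact by Lemma~\ref{lem:future_closed}. However, $\sigma_1|_{[0,b)}$ is future-inextendible in $M_1$---otherwise $\Delta_1$ would extend past $b$, contradicting its inextendibility---and a future-inextendible causal curve in the globally hyperbolic $M_1$ must eventually escape every compact subset, contradicting the above containment. Therefore $t_q<b$ and $q\in M_1$. The main obstacle to this argument is the asymmetric causality at BTZ points: their chronological past is empty, so one cannot sandwich $\sigma_2(t)$ between past and future regular test points as in a standard globally hyperbolic diamond argument; the workaround is to approach the limit $\sigma_2(b)$ from the future only and to exploit the explicit ``radial-spiral'' timelike curves $\gamma_t$ available in the BTZ tube, a feature specific to the local model $\BTZ$.
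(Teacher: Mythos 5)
Your proof is correct, and it rests on the same two pillars as the paper's own argument: a BTZ-extension only adds BTZ points, so any regular point of $M_2$ in the chronological future of the segment $[p,q]$ already lies in $M_1$; and closed diamonds of the globally hyperbolic $M_1$ are compact. The execution, however, is genuinely different. The paper argues directly: it takes a tube around $[p,q]$, picks a regular point $q'\in\partial J^+_{M_2}(q)$ in that tube, notes $p\in I^-_{M_2}(q')$, and observes that the compact (hence closed in $M_2$) diamond $J^+_{M_1}(p)\cap J^-_{M_1}(q')$ contains the $M_2$-closure of the open diamond $I^+(p)\cap I^-(q')$, which contains all of $[p,q]$ — two lines, no contradiction needed. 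You instead parametrize the BTZ line, isolate the putative first missing point $\sigma_2(b)$, build explicit timelike curves $\gamma_t$ from $\sigma_2(t)$, $t<b$, to a regular point $w$ just above $\sigma_2(b)$ (your computation $\mu_t(\mu_t-2)<0$ is right, and these curves play exactly the role of the paper's $p\ll q'$), and conclude that $\sigma_1|_{[0,b)}$ would be imprisoned in the compact set $\overline{\Diamond_p^{\phi^{-1}(w)}}$. The one step you should not leave to a general principle is the final non-imprisonment claim: the paper never establishes, for singular $\E^{1,2}_A$-manifolds, that a future-inextendible causal curve escapes every compact set. Fortunately your curve is special: $\Delta_1$ is a closed embedded $1$-submanifold of $M_1$, so $\sigma_1$ is a proper embedding and $\sigma_1^{-1}\left(\overline{\Diamond_p^{\phi^{-1}(w)}}\right)$ is a compact subset of $(a_-,b)$, which cannot contain $[\max(0,b-\epsilon),b)$; stating this directly closes the gap. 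What the paper's route buys is brevity and the avoidance of that issue altogether; what yours buys is an explicit picture of why the future of a BTZ point drags the whole singular ray above it into any compact diamond.
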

\begin{proof}
 Take some tube neighborhood $\mathcal T$ of radius $R$ of $[p,q]$. Take some point $q'\in \left(\partial J^+_{M_2}(q)\right)\cap \T$ then $p\in I^-_{M_2}(q')$.
 The diamond $\overline \Diamond_p^{q'}$ in $M_1$ is compact and its interior is the open diamond $\Diamond_p^{q'}$.
 The latter is relatively compact in $M_2$ and the former contains its closure in $M_2$ which contains $[p,q]$.
 Thus $[p,q]$ is in $M_1$.
 
\end{proof}

    \subsection{Maximal BTZ-extension theorem} \label{sec:extension_BTZ_2}

      We can now address the maximal BTZ-extension problem for globally hyperbolic $\E^{1,2}_A$-manifolds.
      More precisely, we prove the following theorem.
      
      \begin{theo}[Maximal BTZ-extension] \label{theo:BTZ_ext}Let  $A\subset \R_+$, let $M$ be a globally hyperbolic $\E^{1,2}_A$-manifold. 
      
      There exists a maximal BTZ-extension $\overline M$ of $M$. Furthermore it is unique up to isometry.
      \end{theo}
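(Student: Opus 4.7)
The plan is to follow the Choquet--Bruhat--Geroch amalgamation strategy (as reformulated by Sbierski), exploiting the rigidity of singular charts to make the amalgams canonical. First I would form the class $\mathcal{F}$ of equivalence classes of pairs $(N, \psi)$ where $\psi: M \to N$ is a BTZ-embedding and two pairs are equivalent if related by a $\E^{1,2}_A$-isomorphism over $M$. This is a genuine set: each BTZ line in $N\setminus \psi(M)$ is uniquely determined by the germ in $M$ of the regular part of a tubular half-neighborhood (by Proposition \ref{prop:tube} applied locally plus the rigidity Proposition \ref{prop:isom}), so the cardinality of $\mathcal{F}$ is bounded by that of the open subsets of $M$.

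The core step is to show $\mathcal{F}$ is directed, i.e. that any two BTZ-extensions $\psi_i: M \to M_i$ ($i=1,2$) admit a common BTZ-extension. I would define $M_3 = (M_1 \sqcup M_2)/\sim$, where $\sim$ identifies $\psi_1(p) \sim \psi_2(p)$ for $p \in M$ and identifies BTZ points $x_1 \in M_1\setminus \psi_1(M)$ with $x_2 \in M_2\setminus \psi_2(M)$ whenever a half-tube neighborhood of $x_1$ in $M_1$ and one of $x_2$ in $M_2$ have regular parts which, transferred to $M$ through $\psi_1,\psi_2$, coincide as $\E^{1,2}$-open subsets. The uniqueness of the matching isomorphism comes from Proposition \ref{prop:liouville} and the rigidity Proposition \ref{prop:isom}; this makes $\sim$ an honest equivalence relation and equips $M_3$ with an atlas obtained by pasting the atlases of $M_1$ and $M_2$. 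Global hyperbolicity of $M_3$ follows from the observation that a Cauchy-surface $\Sigma$ of $M$ is still a Cauchy-surface of every BTZ-extension, hence of $M_3$: any inextendible causal curve starting on a newly adjoined BTZ line enters the regular tube neighborhood by Lemma \ref{lem:BTZ_causal_curve} and Lemma \ref{lem:past_BTZ}, so it meets $\Sigma$ exactly once.

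The main obstacle is Hausdorffness of $M_3$. The easy cases are two regular points (separated inside $M_1$ or $M_2$) and a regular point versus a BTZ point (a half-tube neighborhood of the BTZ point can be shrunk to avoid the regular point). The delicate case is two unidentified BTZ points $x_1 \in M_1\setminus\psi_1(M)$ and $x_2 \in M_2\setminus\psi_2(M)$ whose tubes' regular parts nearly overlap in $M$: here Lemma \ref{lem:separation_BTZ} prevents two distinct BTZ lines of a common extension from being unwillingly merged, and Lemma \ref{lem:descente_BTZ} ensures that whatever portion of a BTZ line is captured by $M_1$ passes through to $M_3$ without ``escaping''. Combined with the local rigidity of Proposition \ref{prop:isom}, this forces either $x_1 \sim x_2$ or the existence of disjoint tube neighborhoods, analogously to Sbierski's separation argument referenced in Section \ref{subsec:choquet_bruhat}.

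Once directedness is established, I would either invoke Zorn's lemma on $\mathcal{F}$ or directly take the colimit over $\mathcal{F}$, which yields a BTZ-extension $\overline M$ dominating every other element, hence maximal. Uniqueness is then immediate: if $\overline M$ and $\overline M'$ are two maximal BTZ-extensions, directedness produces a common BTZ-extension $P$ of both; maximality of $\overline M$ and $\overline M'$ forces $\overline M \cong P \cong \overline M'$ as $\E^{1,2}_A$-manifolds over $M$.
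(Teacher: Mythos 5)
Your overall architecture is the same as the paper's: show the family of BTZ-extensions of $M$ is directed by gluing two extensions $M_1,M_2$ along a maximal common part, handle the non-Hausdorff locus with the rigidity Proposition \ref{prop:isom}, then pass to the direct limit. However, there is a genuine gap in your argument for global hyperbolicity of the glued space $M_3$. You claim that a Cauchy-surface $\Sigma$ of $M$ remains a Cauchy-surface of every BTZ-extension because any inextendible causal curve starting on a newly adjoined BTZ line ``enters the regular tube neighborhood.'' This is false: by Lemma \ref{lem:past_BTZ} a connected component of $\sing_0$ is \emph{itself} an inextendible causal curve, and a BTZ-extension may adjoin an entire such component, which then never meets $\psi(\Sigma)$. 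The paper's own Example \ref{exam:mixed_extension} exhibits exactly this: the maximal BTZ-extension of $Reg(\E^{1,2}_0)\setminus J^+(p)$ is $\E^{1,2}_0\setminus J^+(p)$, and the adjoined set $\{r=0,\tau<0\}$ is a full inextendible causal curve disjoint from the original spacetime. (This is also why Section \ref{sec:surgery} exists: Cauchy-surfaces must be surgically modified when BTZ lines are added.) The paper instead establishes global hyperbolicity of the relevant subextension by verifying compactness of closed diamonds directly (via Theorem \ref{theo:geroch}), using tube neighborhoods of segments $[p,x]$ of a BTZ line and a connectedness argument on $I=\{y\in[p,x]~|~[p,y]\subset M\}$; you would need to replace your observation by an argument of this kind.

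Two smaller points. First, your equivalence relation identifying BTZ points of $M_1$ and $M_2$ whose tube neighborhoods have ``coinciding'' regular parts in $M$ is awkward to make transitive and well defined; the paper sidesteps this by first forming the greatest common sub-extension $M_1\wedge M_2$ (the union of all sub-extensions of $M_1$ admitting a compatible BTZ-embedding into $M_2$), proving the resulting map $\phi$ is injective through the identity $I^+(x)=f\circ g^{-1}(I^+(\phi(x)))$, and only then analysing the non-separated points $C$ of the quotient $M_1\vee M_2$ — showing $(M_1\wedge M_2)\cup C$ is itself a common sub-extension forces $C=\emptyset$. Your Hausdorffness paragraph correctly identifies Proposition \ref{prop:isom} and Lemmas \ref{lem:separation_BTZ}, \ref{lem:descente_BTZ} as the relevant tools, but note these lemmas are stated for a single BTZ-embedding, not for the a priori non-Hausdorff quotient, so they cannot be invoked there without the $\wedge$/$C$ scaffolding. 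Second, your appeal to Proposition \ref{prop:tube} to bound the cardinality of $\mathcal F$ is not available as stated, since that proposition assumes Cauchy-maximality; the local tube charts from Definition \ref{def:singular_spacetime} suffice for the germ argument, and the paper handles second countability of the direct limit by the argument of Geroch.
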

      Again, we mean that a spacetime is BTZ-maximal if any BTZ-extension is surjective hence an isomophism.
      The proof has similarities with the one of the maximal Cauchy-extension theorem. Let $M$ a spacetime 
      and consider two BTZ-embeddings $f:M_0\rightarrow M_1$ and $g:M_0\rightarrow M_2$.
      \begin{defi}
          Define $M_1\wedge M_2$ the union of extensions $M$ of $M_0$ in $M_1$ such that there exists a BTZ-embedding 
	  $\phi_M:M\rightarrow M_2$ with $\phi_M\circ f= g$.  
      \end{defi}

      \begin{defi}[Greatest common sub-extension] Define 
       $$\fonction{\phi}{M_1\wedge M_2}{M_2}{x}{\phi_M(x) \text{ if x} \in M}$$
      \end{defi}
      This function $\phi$ is well defined since each $\phi_M$ is continuous and $f(M_0)$ is dense in $M_1$.
      \begin{prop} $\phi$ is a BTZ-embedding. 
      \end{prop}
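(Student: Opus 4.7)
The plan is to verify three properties: $\phi$ is a well-defined $\E^{1,2}_A$-morphism, $\phi$ is injective, and the complement of $\phi(M_1\wedge M_2)$ in $M_2$ is a union of BTZ-lines. Along the way I also need to observe that $M_1\wedge M_2$ is itself globally hyperbolic so that the notion of BTZ-embedding makes sense on it.

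First, for well-definedness I would fix two sub-extensions $M, M' \subset M_1\wedge M_2$ and note that $\phi_M$ and $\phi_{M'}$ both satisfy $\phi_M\circ f = g = \phi_{M'}\circ f$ on $M_0$. Hence they agree on the open dense subset $f(M_0)\cap M\cap M'$ of each connected component of $M\cap M'$, and Proposition \ref{prop:liouville} forces $\phi_M = \phi_{M'}$ on $M\cap M'$. Since $\phi$ restricts to the morphism $\phi_M$ on each open $M$, $\phi$ is globally an $\E^{1,2}_A$-morphism. For global hyperbolicity of $M_1\wedge M_2$, I would take a spacelike smooth Cauchy-surface $\Sigma_0$ of $M_0$ via Theorem \ref{theo:smooth}; by the tube picture (Proposition \ref{prop:tube}) each added BTZ-line contributes exactly one new point, and the resulting surface is a Cauchy-surface of $M_1\wedge M_2$.

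The main step is injectivity. Suppose $\phi(x_1) = \phi(x_2) = y$ with $x_1\in M^a$, $x_2\in M^b$, and $x_1 \neq x_2$. Since $M_1$ is Hausdorff I would choose disjoint open neighborhoods $U_1, U_2$ of $x_1, x_2$ in $M^a, M^b$ respectively, small enough that $\phi_{M^a}|_{U_1}$ and $\phi_{M^b}|_{U_2}$ are homeomorphisms onto open neighborhoods of $y$ in $M_2$. This local-homeomorphism property of BTZ-embeddings is built in on the regular part (isometric open embedding), and near singular BTZ-points it follows from the standard tube model of Proposition \ref{prop:tube}. The set $V=\phi_{M^a}(U_1)\cap \phi_{M^b}(U_2)$ is a nonempty open neighborhood of $y$, and since $g(M_0)$ is dense in $M_2$, I pick $v=g(p_0)\in V$ with $p_0\in M_0$. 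Because $\phi_{M^a}\circ f = g = \phi_{M^b}\circ f$ and both $\phi_{M^a},\phi_{M^b}$ are injective, the unique preimage of $v$ under $\phi_{M^a}$ (resp.\ $\phi_{M^b}$) is $f(p_0)$, which must then lie in both $U_1$ and $U_2$, contradicting $U_1\cap U_2=\emptyset$.

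Finally, $\phi(M_1\wedge M_2) = \bigcup_M \phi_M(M)$ is open in $M_2$, and its complement equals $\bigcap_M \bigl(M_2\setminus \phi_M(M)\bigr)$. Each $M_2\setminus \phi_M(M)$ is by hypothesis a union of full connected components of $\sing_0(M_2)$, so the intersection is again a union of BTZ-lines. I expect the main obstacle to be the injectivity step, specifically justifying that each $\phi_M$ is a local homeomorphism near singular BTZ-points; this rests on the standard tube structure of Proposition \ref{prop:tube} together with the rigidity Proposition \ref{prop:isom}, and must be done carefully because a BTZ-embedding is only defined to be injective and a morphism, so its openness near $\sing_0$ has to be extracted from the model geometry.
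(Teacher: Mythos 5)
Your argument is correct, but your injectivity step takes a genuinely different route from the paper's. The paper argues causally: if $\phi_1(x)=\phi_2(y)=p$, then $I^+(p)$ contains no BTZ point, hence $I^+(p)\subset g(M_0)$; since the $\phi_i$ identify chronological futures, $I^+(x)=f\circ g^{-1}\left(I^+(p)\right)=I^+(y)$ in $M_1$, and the distinguishing property of the globally hyperbolic $M_1$ gives $x=y$. You argue topologically: the maps $\phi_M$ are open, so disjoint neighbourhoods of $x_1\neq x_2$ have overlapping open images, and a point of the dense set $g(M_0)$ in that overlap pulls back under both maps to the single point $f(p_0)$, contradicting disjointness. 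Both work. Your route needs no causal input, and the openness you flag as the main obstacle is actually free: an injective continuous map between $3$-manifolds is open by invariance of domain, so you need neither Proposition \ref{prop:tube} (which assumes a Cauchy-maximality the subextensions $M$ need not enjoy) nor Proposition \ref{prop:isom} for this point. What the paper's route buys is the identity $I^+(x)=I^+(y)$ itself, which is exactly the mechanism reused in the subsequent analysis of the non-separated set $C$. Two caveats on the remainder: your assertion that each $M_2\setminus\phi_M(M)$ is a union of \emph{full} connected components of $\sing_0(M_2)$ reads more into the definition of BTZ-embedding than is warranted (Lemmas \ref{lem:separation_BTZ} and \ref{lem:descente_BTZ} would be largely pointless under that reading), but all that is needed, and all the paper claims, is that the complement of the image lies in $\sing_0(M_2)$, which is immediate from $g(M_0)\subset\phi(M_1\wedge M_2)$; and your sketch of global hyperbolicity of $M_1\wedge M_2$ (one new point on the Cauchy surface per added BTZ line) does not work as stated --- a complete spacelike Cauchy surface of $Reg(\BTZ)$ escapes to $\tau=+\infty$ along the singular line and acquires no limit point there, which is precisely the surgery problem of Section \ref{sec:surgery} --- though the paper itself leaves this point implicit here and only establishes global hyperbolicity for $(M_1\wedge M_2)\cup C$ in a later proposition.
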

      \begin{proof}
      	The image of $\phi$ contains the image of $M_0$ thus its complement 
      in $M_2$ is a subset of $\sing_0(M_2)$. We must show $\phi$ is injective.
      
        Let $N_1$ and $N_2$ be two subextensions of $M_1$ together with BTZ-embeddings $\phi_i:N_i \rightarrow M_2$. Let $(x,y)\in N_1\cup N_2$ 
        be such that $\phi(x)=\phi(y)=p\in M_2$. 
        Notice that $I^+(p)\subset M_2\setminus \sing_0(M_2)$ thus $$\emptyset\neq I^+(p)=\phi_1(I^+(x))=\phi_2(I^+(y))\subset g(M_0).$$ 
        Then $I^+(x)=f\circ g^{-1}(I^+(p))=I^+(y)$ and  $x=y$.
      \end{proof}
      
      \begin{defi}[Least common extension] 
             Define the least common extension of $M_1$ and $M_2$ : $$M_1\vee M_2=(M_1\coprod M_2)/(M_1 \wedge M_2)$$ 
             where the quotient is understood identifying $M_1\wedge M_2$ and $\phi(M_1\wedge M_2)$.
             The define the natural projetction $$\pi:M_1 \coprod M_2 \rightarrow M_1\vee M_2.$$
      \end{defi}
 The following diagram sums-up the situation.
           
           $$\xymatrix{
	  &M\ar[r]\ar@{-->}[ddr]^{\phi_{M}} &M_1\wedge M_2\ar@{-->}[dd]^{\phi}\ar[r]& M_1\ar[dr] &  \\
	  M_0\ar[drr]^g\ar[ur]^f&& && M_1\vee M_2&&&&\\
	  &&M_2\ar[urr]&}$$          
      Notice that $M_1\vee M_2$ need not be Hausdorff. There could be non spearated points, i.e. points $p,q$ such that 
      for every couple of open neighborhoods $(\U,\V)$ of $p$ and $q$, $\U\cap \V\neq \emptyset$.
      \begin{defi} Define $C=\{p\in M_1\wedge M_2 ~|~ \pi(p) \text{ is not separated}  \}$ 
      \end{defi}
      The following Propositions prove that $(M_1\wedge M_2) \cup C$ is a globally hyperbolic 
      $\E^{1,2}_A$-manifold to which $\phi$ extends. Thus it is a sub-BTZ-extension common to $M_1$ and $M_2$.
      It will prove that $C=\emptyset$.
      \begin{prop} $(M_1\wedge M_2) \cup C$ is open and $\phi$ extends injectively to  $(M_1\wedge M_2) \cup C$. 
      \end{prop}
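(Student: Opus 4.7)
Proof proposal. The plan is to reduce the statement to a local question at each non-separated point by exploiting the tube structure around a BTZ singularity and the local rigidity of Proposition~\ref{prop:isom}.

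First I would describe what it means for a point $p$ to lie in $C$: there must exist $q$ on the $M_2$-side together with sequences $x_n\to p$ in $M_1$ such that $y_n:=\phi(x_n)\to q$ in $M_2$, where $\pi(p)\neq\pi(q)$ but every pair of neighborhoods meets. Any regular point has a chart neighborhood diffeomorphic to an open set of $\E^{1,2}$, which is Hausdorff, so by the open mapping property of $\phi$ on the regular locus such non-separation cannot happen at regular points nor at massive particle points (whose local models are metrizable and separated from other points). Hence both $p$ and the candidate $q$ must lie on BTZ lines, i.e.\ $p\in\sing_0(M_1)$ and $q\in\sing_0(M_2)\setminus\phi(M_1\wedge M_2)$.

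Next, I would exploit the tube structure. Choose a tube neighborhood $\U$ of $p$ in $M_1$ isomorphic to $\{\tau\in[\tau_1,\tau_2],\ r\le R\}\subset \BTZ$, and similarly a tube $\V$ of $q$ in $M_2$. For $n$ large, $x_n\in\U$ and $y_n\in\V$, and a sufficiently small open neighborhood of $x_n$ in $Reg(\U)$ is sent by $\phi$ into $Reg(\V)$ as an injective $\E^{1,2}$-morphism. Proposition~\ref{prop:isom} forces this local morphism to be the restriction of an isometry between two subtubes of $\BTZ$; by the standard tube description it extends to a $\E^{1,2}_A$-isomorphism $\Phi:\U'\to\V'$ between possibly smaller tube neighborhoods of $p$ and $q$, identifying the singular axes. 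In particular $\U'\subset (M_1\wedge M_2)\cup C$, which proves openness.

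Now define $\tilde\phi$ on $(M_1\wedge M_2)\cup C$ by $\tilde\phi|_{M_1\wedge M_2}=\phi$ and $\tilde\phi(p)=q$ for $p\in C$. Proposition~\ref{prop:liouville} guarantees that any two tube-level extensions agree on overlaps and that the $q$ associated to $p$ is uniquely determined (so $\tilde\phi$ is well defined and continuous), because two extensions coincide on a nonempty open set of $Reg(\U)$ where they both equal $\phi$. For injectivity, suppose $\tilde\phi(p)=\tilde\phi(p')=q$ with $p,p'\in C$; the local isomorphisms $\Phi_p,\Phi_{p'}$ constructed above both identify tube neighborhoods of $p,p'$ with the same tube of $q$, so $\Phi_{p'}^{-1}\circ\Phi_p$ is a local $\E^{1,2}_A$-morphism of $M_1$ near $p$ that agrees with the identity on the regular part, hence is the identity by Proposition~\ref{prop:liouville}; thus $p=p'$. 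Injectivity across $C$ and $M_1\wedge M_2$ is automatic because $\phi$ is already injective and sends $M_1\wedge M_2$ into $Reg(M_2)\cup\phi(\sing_0(M_1\wedge M_2))$, disjoint from the $C$-targets.

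The main obstacle is the well-definedness of $\tilde\phi$ on $C$: a priori a non-separated point $p$ might be paired with several candidates $q$. The heart of the argument is therefore the use of Proposition~\ref{prop:isom} (local rigidity for singular model spaces) to show that the approximate identification produced by sequences $x_n\to p$, $\phi(x_n)\to q$ promotes uniquely to an honest tube-to-tube isometry; once that is in hand, openness and the injective extension follow by gluing local isomorphisms and invoking analyticity (Proposition~\ref{prop:liouville}).
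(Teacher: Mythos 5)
Your overall skeleton matches the paper's (locate $C$ on BTZ lines, upgrade the non-separation to a tube-to-tube isometry via rigidity, then glue and conclude with Proposition \ref{prop:liouville}), but the step you yourself call the heart of the argument is precisely the one left unproved, and the way you try to fill it does not work. You invoke Proposition \ref{prop:isom} on ``a sufficiently small open neighborhood of $x_n$ in $Reg(\U)$'' for regular points $x_n\to p$. That proposition only applies when the domain is the regular part of a connected open set \emph{containing a singular point}; the paper stresses that a relatively compact regular piece of $\E^{1,2}$ embeds into every $\E^{1,2}_\beta$, so no rigidity whatsoever is available at the level of neighborhoods of the $x_n$, and nothing forces the local morphism to extend across the axis. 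The missing ingredient is the identity $\phi(I^+(p))=I^+(q)$: since $I^+(p)\subset\bigcup_n I^+(x_n)$, $I^+(q)\subset\bigcup_n I^+(\phi(x_n))$ and $\phi(I^+(x_n))=I^+(\phi(x_n))$, non-separation forces $\phi$ to map the chronological future of $p$ onto that of $q$. Because $I^+(p)$ in the BTZ model is exactly the regular locus of a neighborhood of a piece of the singular ray, this finally places you in the hypotheses of Proposition \ref{prop:isom}, which then yields both that $q$ is a BTZ point ($\alpha=0$) and that $\phi$ is the restriction of a model-space isometry; only then does your tube-to-tube map $\Phi$ exist.

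Two smaller problems. First, your reason for placing $p$ and $q$ on BTZ lines (``the local models are Hausdorff, so non-separation cannot happen at regular or massive points'') is a non-argument: non-Hausdorffness of a gluing is a global phenomenon, and every point of the line with two origins has a Euclidean neighborhood. For $p$ the correct reason is that $C$ lies in the complement of $M_1\wedge M_2\supset M_0$ and $M_1$ is a BTZ-extension of $M_0$, so that complement is contained in $\sing_0(M_1)$; for $q$ the type is an output of the rigidity argument, not an input. Second, your injectivity argument composes $\Phi_{p'}^{-1}\circ\Phi_p$ and calls it a local morphism near $p$ agreeing with the identity, but if $p\neq p'$ their tubes may be disjoint and this composite is a map between two different neighborhoods, not a self-map. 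The clean argument is the paper's: if $\overline\phi(p)=\overline\phi(p')$ then the images of any two neighborhoods overlap in a nonempty open set, which contains regular points; pulling these back gives sequences of regular points $p_n\to p$ and $q_n\to q$ with $\phi(p_n)=\phi(q_n)$, so injectivity of $\phi$ gives $p_n=q_n$ and Hausdorffness of $M_1$ gives $p=p'$.
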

      \begin{proof}
            Since $M_1\wedge M_2$ is a $\E^{1,2}_A$-manifold we shall only check 
            the existence of a chart around points of $C$. The set $C$ is in the complement of $M_1\wedge M_2$ and thus 
            is a subset of $\sing_0(M_1)$. Let $p\in C$ and $p'\in M_2$ such that $\pi(p)$ and $\pi(p')$ are not separated in 
            $M_1\vee M_2$. Let $\U_p\xrightarrow{\psi_p} \V_p\subset \E^{1,2}_0$ 
            a chart around $p$ and $\U_{p'}\xrightarrow{\psi_{p'}} \V_{p'}\subset \E^{1,2}_\alpha$ 
            a chart around $p'$. Since $\pi(p)$ and $\pi(p')$ are not separated, there exists a sequence 
            $(p_n)_{n\in\N} \in Reg(M_1)^\N$ such that $\lim_{n\rightarrow +\infty} p_n =p$ and $\lim_{n\rightarrow +\infty}\phi(p_n)=p'$.
            Take such a sequence, notice that forall $n\in\N$, $\phi(I^+(p_n))=I^+(\phi(p_n))$ and that 
            $I^+(p) \subset \bigcup_{n\in\N} I^+(p_n)$ and $I^+(p')\subset \bigcup_{n\in\N} I^+(\phi(p_n))$. 
            We then get $\phi(I^+(p))=I^+(p')$. Therefore, taking smaller $\U_p$ and $\U_{p'}$ if necessary, 
            we may assume, $\U_p$ connected and $\phi(I^+(p)\cap \U_p)=I^+(p')\cap \U_{p'}$ then 
            $$\psi_{p'} \circ \phi \circ \psi_p^{-1}: I^+(\psi_p(p))\cap \V_p \rightarrow I^+(\psi_{p'}(p'))\cap \V_{p'}$$
            is an injective $\E^{1,2}$-morphism. The future of a point in $\V_p$ is the regular part of 
            a neighborhood of some piece of the singular line  in $\E^{1,2}_0$. Thus, by Proposition \ref{prop:isom}, 
            $\alpha=0$ and $\psi_{p'} \circ \phi \circ \psi_p^{-1}$ is the restriction of an isomophism of $\E^{1,2}_0$ say $\gamma_p$. 
            
            Choose such neighborhood $\U_p$, such $\psi_p,\psi_{p'}$ and such $\gamma_p$ for all $p\in C$.
            The subset $(M_1\wedge M_2) \cup \bigcup_{p\in C}\U_p$ is open thus a $\E^{1,2}_A$-manifold and the 
            $\E^{1,2}_A$-morphism 
            $$\fonction{\overline \phi}{(M_1\wedge M_2) \cup \bigcup_{p\in C}\U_p}{M_2}{x}{\left \lbrace \begin{array}{ll}
                                                                                                     \phi(x)& \text{ if } x\in M_1\wedge M_2\\
                                                                                                     \psi_{p'}^{-1}\circ \gamma \circ \psi_p(x)&\text{ if }x \in \U_p
                                                                                                    \end{array}
 \right.}  $$
           is then well defined  by Lemma \ref{prop:liouville}. Notice that  for all $p\in C$ and all $q \in \sing_0(\U_p)$,
           the points $\pi(q)$ and $\pi\circ \overline \phi(q)$ of $M_1\vee M_2$ are not separated. Therefore, either $q\in C$ or $\pi(q)=\pi\circ \overline \phi(q)$
           thus $$(M_1\wedge M_2) \cup \bigcup_{p\in C}\U_p=(M_1\wedge M_2) \cup C$$
           and thus $(M_1\wedge M_2) \cup C$ is open.
                 
           It remains to show that $\overline \phi$ is injective. If $p,q\in (M_1\wedge M_2 )\cup C$ have same image by $\overline \phi$ then the image by $\overline \phi$ of 
           any neighborhood of $p$ intersects the image of any neighborhood of $q$. This intersection is open and thus contains regular points. 
           We can construct sequences of regular points $p_n\rightarrow p$  and $q_n\rightarrow q$ such that $\phi(p_n)=\phi(q_n)$. 
           By injectivity of $\phi$, $\forall n\in\N, p_n=q_n$ and thus, since $M_1$ is Hausdorff, $p=q$.

  \end{proof}
      
  \begin{prop}
   $(M_1\wedge M_2) \cup C$ is globally hyperbolic.
  \end{prop}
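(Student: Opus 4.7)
The plan is to apply the Geroch characterization (Theorem \ref{theo:geroch}) by verifying causality and exhibiting a Cauchy surface. Causality is immediate: $(M_1\wedge M_2)\cup C$ is an open subset of the causal manifold $M_1$, so any closed causal curve in the former would be one in the latter. For the Cauchy surface, let $\Sigma_0$ be a Cauchy surface of $M_0$ and set $\Sigma':=f(\Sigma_0)\subset M_1\wedge M_2\subset(M_1\wedge M_2)\cup C$. Since $f$ is a BTZ-embedding into both $M_1\wedge M_2$ and $M_1$, $\Sigma'$ is a Cauchy surface of each, and in particular acausal in our candidate manifold. It remains to show $\Sigma'$ meets every inextendible causal curve.

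Let $c$ be an inextendible causal curve in $(M_1\wedge M_2)\cup C$ and decompose $c=\Delta\cup c^0$ via Lemma \ref{lem:past_BTZ}. Because $M_1\setminus(M_1\wedge M_2)\subset\sing_0(M_1)$, every regular or massive point of $M_1$ already lies in $M_1\wedge M_2$; combined with the fact (from Lemma \ref{lem:past_BTZ}) that a future extension of a non-BTZ causal curve remains non-BTZ, the future extension of $c^0$ in $M_1$ stays in $(M_1\wedge M_2)\cup C$, so $c^0$ is future-inextendible in $M_1$. Hence an inextendible extension $\hat c$ of $c$ in $M_1$ can only adjoin points to $c$ in the past, along a BTZ segment $\hat\Delta\supset\Delta$ lying in a connected component of $\sing_0(M_1)$ (when $c^0=\emptyset$ the curve $c=\Delta$ may also be extendible in the future along the same BTZ line; this case is treated symmetrically). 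Global hyperbolicity of $M_1$ gives a unique $s_0\in\hat c\cap\Sigma'\subset(M_1\wedge M_2)\cup C$. If $s_0\in c$ we are done; otherwise $s_0\in\hat\Delta\setminus\Delta$, and because $\Delta$ cannot be extended in $(M_1\wedge M_2)\cup C$ in the past BTZ direction, the arc of $\hat\Delta$ from $\Delta$ down to $s_0$ must exit $(M_1\wedge M_2)\cup C$; in particular the past endpoint of $\Delta$ on $\hat\Delta$ is some $p^*\in\sing_0(M_1)\setminus[(M_1\wedge M_2)\cup C]$.

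The crux is to exclude such a $p^*$. Pick $r_n\in\Delta$ with $r_n\to p^*$ in $M_1$. For indices with $r_n\in M_1\wedge M_2$ set $z_n:=r_n$; for $r_n\in C$, the non-separation sequences provided by the definition of $C$, combined with a diagonal extraction, yield $z_n\in M_1\wedge M_2$ with $z_n\to p^*$ in $M_1$ and $d_{M_2}(\phi(z_n),\overline\phi(r_n))\to 0$. Since $r_n$ eventually sits in a compact diamond of $M_1$, the images $\overline\phi(r_n)$ sit in a compact diamond of $M_2$; extracting further yields $\phi(z_n)\to p^{*\prime}$ in $M_2$. If $p^{*\prime}\in\phi(M_1\wedge M_2)$, the injective local homeomorphism $\phi$ forces $z_n\to \phi^{-1}(p^{*\prime})\in M_1\wedge M_2$ in $M_1$, whence by uniqueness of limits $p^*\in M_1\wedge M_2\subset(M_1\wedge M_2)\cup C$, contradiction. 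Otherwise $p^{*\prime}\in M_2\setminus\phi(M_1\wedge M_2)$, and the sequence $(z_n)$ witnesses that $\pi(p^*)$ and $\pi(p^{*\prime})$ are non-separated in $M_1\vee M_2$, forcing $p^*\in C\subset(M_1\wedge M_2)\cup C$ by definition, again a contradiction. The main obstacle is this last dichotomy: one must translate the geometric gap at $p^*$ into a non-separation witness in $M_1\vee M_2$, which is precisely what the construction of $C$ is designed to capture. Once this is done, the absence of $p^*$ shows $s_0\in c$ and $\Sigma'$ is a Cauchy surface.
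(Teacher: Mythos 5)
There is a genuine gap at the very first step, and it is fatal to the whole strategy. You claim that because $f$ is a BTZ-embedding, $\Sigma'=f(\Sigma_0)$ is a Cauchy-surface of $M_1\wedge M_2$ and of $M_1$. This is false: BTZ-embeddings do not send Cauchy-surfaces to Cauchy-surfaces, and repairing exactly this failure is the point of the surgery carried out in Section \ref{sec:complete_stability}. Concretely, take $M_0=Reg(\E^{1,2}_0)$ and $M_1=M_2=\E^{1,2}_0$ with $f=g$ the inclusion (so $M_1\wedge M_2=\E^{1,2}_0$): every Cauchy-surface of $M_0$ lies in $\{r>0\}$ and therefore misses the BTZ line $\{r=0\}$, which is an inextendible causal curve of $M_1$ by Lemma \ref{lem:past_BTZ}. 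The same phenomenon occurs inside your candidate manifold: a connected component of $\sing_0(M_1\wedge M_2)$ disjoint from $f(M_0)$ is an inextendible causal curve of $(M_1\wedge M_2)\cup C$ that never meets $\Sigma'$. Consequently $\Sigma'$ is not acausal-and-Cauchy in the required sense, and every later step that invokes the unique intersection point $s_0\in\hat c\cap\Sigma'$ has nothing to stand on. A secondary problem sits in your final paragraph: you assert that the images $\overline\phi(r_n)$ lie in a compact diamond of $M_2$, but the $r_n$ decrease to $p^*$ along a BTZ line and you have no point of $M$ (or of $M_2$) in their past with which to anchor such a diamond; since the causal past of a BTZ point is only the thin ray below it, $J^-_{M_2}(\overline\phi(q))$ for $q$ above gives no compactness, so the extraction of $p^{*\prime}$ is unjustified.

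The paper's proof avoids both difficulties by establishing the other half of the Geroch criterion of Theorem \ref{theo:geroch}: causality plus compactness of the closed diamonds $J^+_M(p)\cap J^-_M(q)$, with no Cauchy-surface ever produced. Moreover its connectedness/non-separation argument runs \emph{upward}: for $p\in M$ it shows $I=\{y\in[p,x]\ :\ [p,y]\subset M\}$ is open and closed in $[p,x]$, obtaining the needed compactness in $M_2$ from diamonds $\Diamond_p^{p'}$ anchored from below at $p\in M$ and from above at a regular point $p'\in M_0$. This exploits the asymmetry of BTZ causality (futures of BTZ points are open and large, pasts are one-dimensional), which your downward-running argument cannot use. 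To repair your proof you would need to replace $f(\Sigma_0)$ by a genuine Cauchy-surface and re-prove the intersection property, which amounts to the same work; switching to the compact-diamond criterion is the right move.
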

  \begin{proof}
               
            Write $M=(M_1\wedge M_2)\cup C$ and let  $p,q\in M$, we now show that $J^-_M(q)\cap J^+_M(p)$ is compact. 
            We identify $M_0$ and $f(M_0)\subset M_1$.
      If $p\notin \sing_0(M_1)$, $J^+(p)\subset M\setminus \sing_0(M_1)\subset M_0 $ and  $J^-_{M}(q)\cap J^+_M(p)= J^-_{M_0}(q)\cap J^+_{M_0}(p)$ which is compact.
      Assume now $p$ is of type $\E^{1,2}_0$. Let $(x_n)\in M^\N$  be sequence such that 
      $\forall  n\in\N, x_n \in J^+_M(p)\cap J^-_M(q)$. By compactness of 
      $J^+_{M_1}(p)\cap J^-_{M_1}(q)$, we can assume $ (x_n)$ converges to some $x\in \sing_0(M_1)$. 
           
     Consider some compact tube slice neighborhood $\T$ of $[p,x]$ in $M_1$, the subset $M\cap [p,x]$ 
     is open in $[p,x]$ and contains $p$.    Consider $I=\{y\in [p,x] ~|~ [p,y]\subset M\}$.
     The set $I$ is connected and open in $[p,x]$. Take an increasing sequence $(y_n)_{n\in\N}\in I^\N$,  it converges
     toward some $y_\infty \in [p,x]$. Take some compact diamond neighborhood $\overline\Diamond_p^{p'}$ of $]p,y_\infty[$ inside  $\mathcal T$.
     We can take $p' \in \partial J^+(y_\infty)$ so that $p'\in M_0\cap \T$.
     The diamond $\phi(\Diamond_p^{p'})$ of $M_2$ is relatively compact thus one can extract a converging 
     subsequence of $\phi (y_n)$ toward some $y'_\infty \in M_2$. Therefore $\pi(y'_\infty)$ and $\pi(y_\infty)$ are 
     not separated and $y_\infty \in M$. Finally, $M\cap [p,x]$ is closed and $I=[p,x]$.
     Finally, $x\in M$, the sequence $(x_n)_{n\in\N}$ has a converging subsequence in $J^+_M(p)\cap J^-_M(q)$.  
  \end{proof}

 \begin{cor}
  $M_1\vee M_2$ is Hausdorff.
 \end{cor}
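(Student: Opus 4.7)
The plan is to exploit the \emph{maximality} built into the definition of $M_1\wedge M_2$ as the union of all common sub-extensions. Concretely, the two preceding propositions together assert that $N:=(M_1\wedge M_2)\cup C$ is an open, globally hyperbolic $\E^{1,2}_A$-submanifold of $M_1$, and that $\overline\phi:N\to M_2$ is an injective $\E^{1,2}_A$-morphism. The first step is to verify that $\overline\phi$ is in fact a \emph{BTZ-embedding}: the added points lie in $\sing_0(M_1)$ by the proof of openness, and the local model obtained in that proof sends them to points of $\sing_0(M_2)$, so $M_2\setminus \overline\phi(N)\subset M_2\setminus\phi(M_1\wedge M_2)$ is still a union of BTZ lines. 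Together with $\overline\phi\circ f=g$, this means $N$ is itself a sub-extension of $M_0$ sitting inside $M_1$ and embedding BTZ-ly into $M_2$.

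The second step is immediate: by the very definition of $M_1\wedge M_2$ as the union of all such sub-extensions, $N\subseteq M_1\wedge M_2$. Since $C$ was constructed inside $M_1\setminus (M_1\wedge M_2)$, this forces $C=\emptyset$.

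The third step converts $C=\emptyset$ into Hausdorffness of $M_1\vee M_2$. For two points $\pi(p_1),\pi(p_2)$ with both representatives in $M_1$ (respectively both in $M_2$), separation follows at once from the Hausdorffness of $M_1$ (respectively $M_2$) combined with the injectivity of $\phi$ on $M_1\wedge M_2$: any pair of disjoint open neighborhoods $U,V$ in $M_1$ saturate to disjoint open sets $\pi(U),\pi(V)$ in the quotient. The only remaining case is a pair $\pi(p_1),\pi(p_2)$ with $p_1\in M_1$ and $p_2\in M_2$. If such a pair were non-separated, shrinking neighborhoods around $p_1$ in $M_1$ and around $p_2$ in $M_2$ and extracting points from the forced non-empty intersections would yield a sequence $x_n\in M_1\wedge M_2$ with $x_n\to p_1$ in $M_1$ and $\phi(x_n)\to p_2$ in $M_2$. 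Then $p_1$ would satisfy exactly the non-separation condition defining $C$, contradicting $C=\emptyset$.

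The main obstacle is really the first step: one must check that the extension $\overline\phi$ produced by the previous proposition, viewed on the added singular points of $C$, indeed sends them into $\sing_0(M_2)$ so that $\overline\phi$ qualifies as a BTZ-embedding and not merely as an injective morphism. This is guaranteed by the local model $\psi_{p'}^{-1}\circ\gamma_p\circ\psi_p$ constructed around each $p\in C$, which is built so that $p$ (a BTZ point of $M_1$) corresponds to a BTZ point of $M_2$; once this is in hand, everything else is formal unwinding of definitions.
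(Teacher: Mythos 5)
Your proof is correct and takes essentially the same route as the paper: the two preceding propositions exhibit $(M_1\wedge M_2)\cup C$ as a common BTZ-sub-extension of $M_0$ inside $M_1$ embedding into $M_2$, so the maximality of $M_1\wedge M_2$ (defined as the union of all such sub-extensions) forces $C=\emptyset$, whence Hausdorffness. The paper states exactly this in three sentences; your additional verifications --- that $\overline\phi$ is genuinely a BTZ-embedding and that $C=\emptyset$ disposes of every separation case --- are details the paper leaves implicit.
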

 \begin{proof}
  $(M_1\wedge M_2)\cup C$ is a BTZ-extension of $M_0$ inside $M_1$ with a BTZ-embedding into $M_2$. Therefore it is 
  a subset of $M_1\wedge M_2$ by maximality of $M_1\wedge M_2$. Finally, $C=\emptyset$.
 \end{proof}

  The construction above of a least common extension show that the family of BTZ-extensions of $M_0$ is a right filtered 
  family and can thus take the direct limit of all such extensions. 
  Consider a family of representants of the isomorphism classes $(M_i)_{i\in I}$  together with BTZ-embeddings
  $\phi_{ij}: M_i\rightarrow M_j $ whenever it exists. The direct limit of this family is 
  $$\overline M_0 = \varinjlim_{i\in I} M_i =\left(\coprod_{i\in I} M_i\right) / \sim$$
  where $x\sim y \Leftrightarrow \exists (i,j), \phi_{ij}(x)=y$.
  
  It remains to check that the topology of such a limit is second countable. The proof is an adaptation of the arguments of Geroch
  given in \cite{MR0234703}.
  
    \subsection{A remarks on Cauchy and BTZ-extensions } \label{sec:extension_BTZ_3}

      One may ask what happens if one takes the Cauchy-extension then the BTZ-extension. Is the resulting manifold Cauchy-maximal?
      The answer is no as the following example shows. 
      \begin{example}\label{exam:mixed_extension}
	  Let $M_0=\{\tau<0,r>0\}$ be  the past half tube in cylindrical coordinates of $\E^{1,2}_0$ and let $p=(\tau=0,r=0)$. The spacetime  be  $M_0$ is regular and globally hyperbolic. 
	  Let $M_1$  be its maximal Cauchy-extension, $M_2$ the maximal BTZ-extension of $M_1$ and $M_3$ the maximal Cauchy-extension of $M_2$. 
	  \begin{itemize}
	   \item  $M_0=Reg(\E^{1,2}_0)\setminus J^+(\{\tau=0\})$
	   \item $M_1=Reg(\E^{1,2}_0)\setminus J^+(p)$. 
	   \item $M_2=\E^{1,2}_0\setminus J^+(p)$
	   \item $M_3=\E^{1,2}_0$
	  \end{itemize}
      \end{example}

      	  \begin{figure}[h]
	     \includegraphics[width=3.5cm]{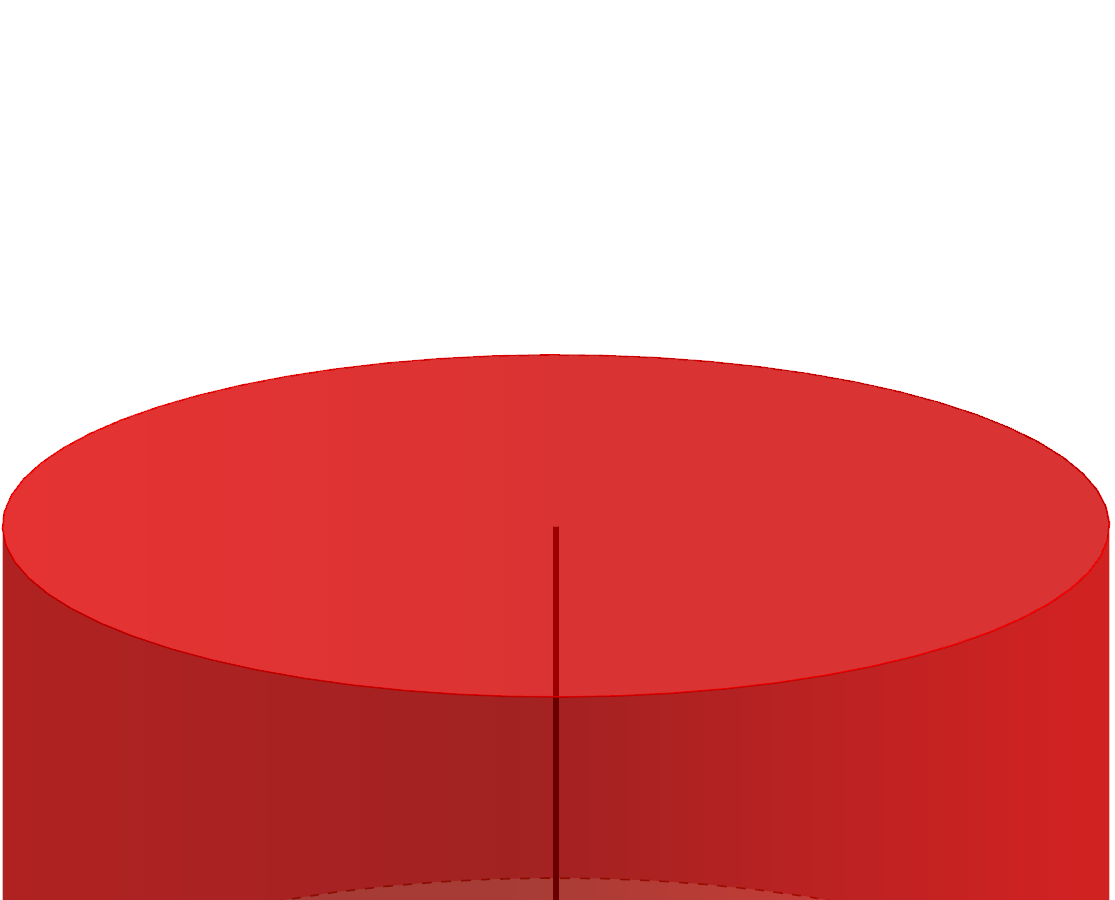}
	    \includegraphics[width=3.5cm]{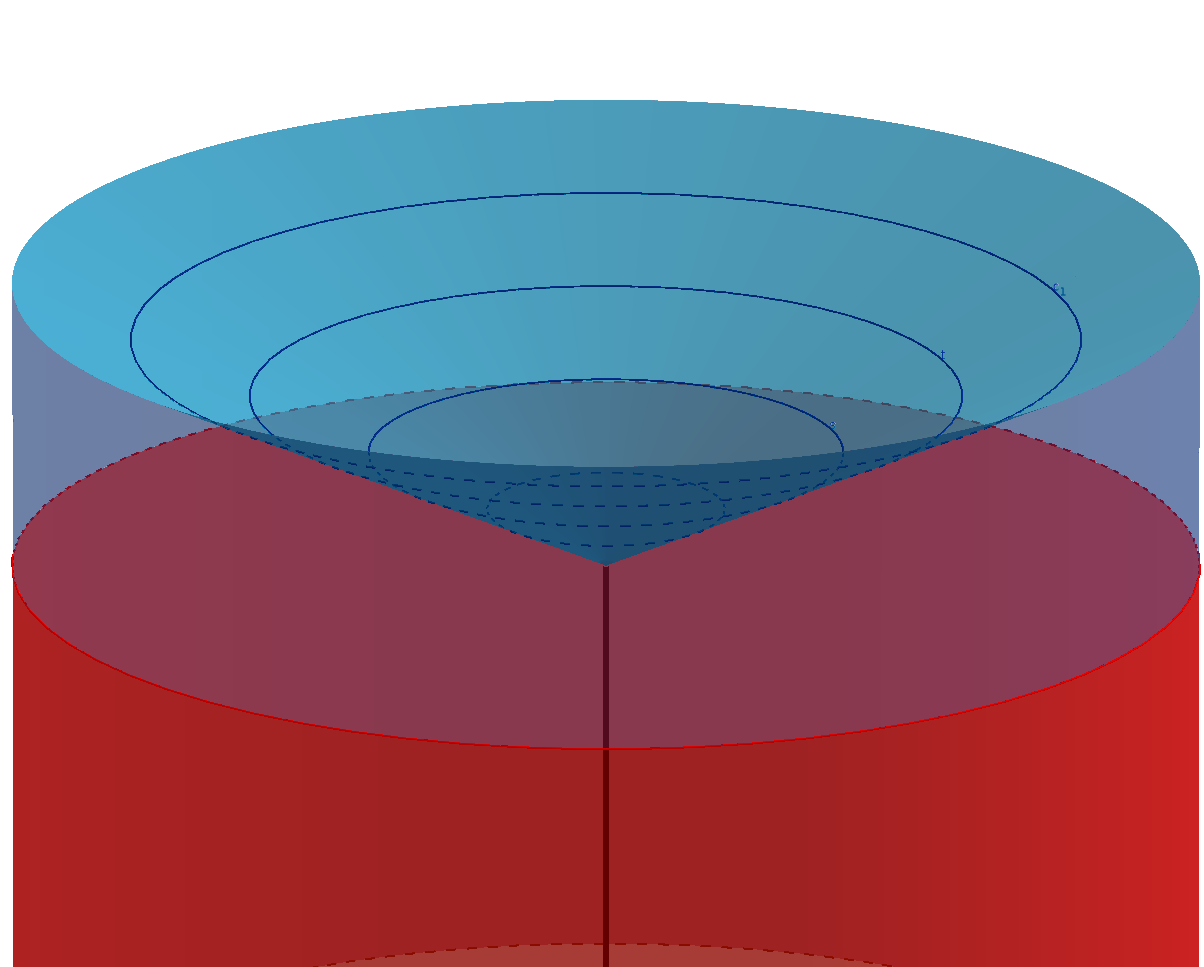}
	    \includegraphics[width=3.5cm]{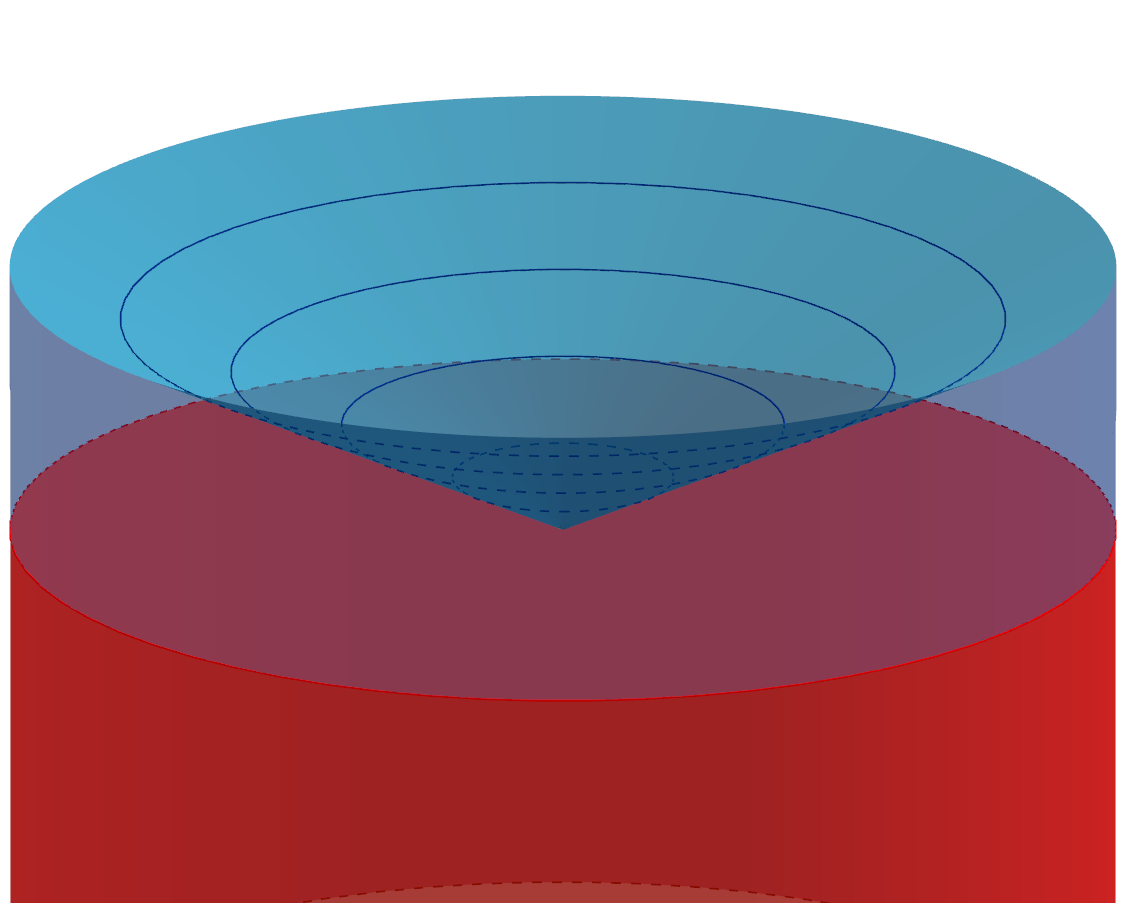}
	    \includegraphics[width=3.5cm]{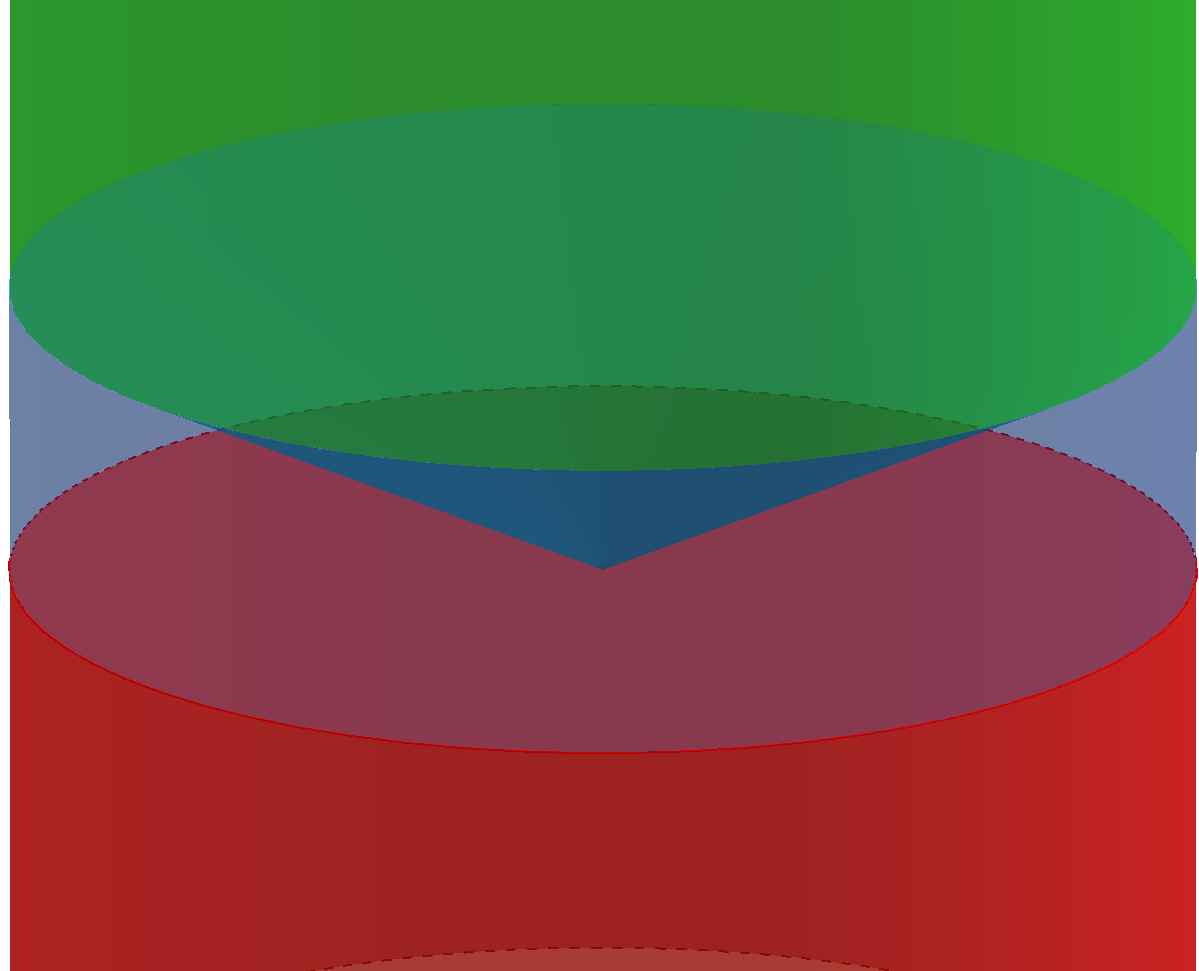}
	    \begin{caption}{Successive maximal Cauchy-extension and BTZ-extension of a past half tube in $\BTZ$.} 
	    In red the initial half tube, in black the BTZ line missing. 
	    In blue its Cauchy-extension then the BTZ line is caught via the BTZ-extension. In green the final Cauchy-extension 
	  \end{caption}
	  \end{figure}
     \begin{conjecture}
       Let $M_0$ be a globally hyperbolic singular manifold, $M_1$ its maximal Cauchy-extension, $M_2$ the 
       maximal BTZ-extension of $M_1$,
       $M_3$ the maximal Cauchy-extension of $ M_2$.
       
       Then $M_3$ is both Cauchy-maximal and BTZ-maximal.
     \end{conjecture}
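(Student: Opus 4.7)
My plan is to reduce BTZ-maximality of $M_3$ to its Cauchy-maximality, by establishing the following implication: \emph{every BTZ-extension of a Cauchy-maximal $\E^{1,2}_A$-manifold is automatically a Cauchy-embedding, hence trivial.} Cauchy-maximality of $M_3$ is immediate from Theorem~\ref{theo:cauchy_max_ext}, so this implication, applied to $M_3$, yields BTZ-maximality as a consequence. This is the whole content of the conjecture.

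Suppose then that $\phi\colon M_3\hookrightarrow N$ is a BTZ-extension, and identify $M_3$ with its image in $N$. By the definition of BTZ-extension, $N\setminus M_3\subset\sing_0(N)$; the existence of standard tube charts at every point of $M_3$ (regular, massive-particle, or BTZ) shows that $M_3$ is open in $N$, hence $N\setminus M_3$ is a closed subset of $\sing_0(N)$. Moreover $\mathrm{Reg}(N)\cup\bigcup_{\alpha>0}\sing_\alpha(N)\subset M_3$. Applying Lemma~\ref{lem:descente_BTZ} (closure under BTZ-future inside $M_3$), each connected BTZ-line $\Delta$ of $N$ decomposes in natural parametrization as $\Delta\cap M_3=(\tau_{\mathrm{bdy}},+\infty)$ and $\Delta\cap(N\setminus M_3)=(-\infty,\tau_{\mathrm{bdy}}]$ for some $\tau_{\mathrm{bdy}}\in[-\infty,+\infty]$.

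Fix a Cauchy-surface $\Sigma$ of $M_3$ and let $c$ be an arbitrary inextendible causal curve of $N$. By Lemma~\ref{lem:past_BTZ}, $c=\Delta_c\cup c^0$ with $\Delta_c\subset\sing_0(N)$ in the past of $c^0\subset M_3$. The $M_3$-trace $\tilde c:=(\Delta_c\cap M_3)\cup c^0$ is a causal curve in $M_3$: future-inextendible (inherited from $c$) and past-inextendible in $M_3$ because any past-extension in $M_3$ would prolong $\tilde c$ along its BTZ-line past $\tau_{\mathrm{bdy}}$ into $N\setminus M_3$. Therefore $\tilde c$ meets $\Sigma$ exactly once; since $\Sigma\subset M_3$, this is the unique intersection of $c$ with $\Sigma$ in $N$. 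Thus $\Sigma$ is a Cauchy-surface of $N$ and $\phi$ is a Cauchy-embedding; Cauchy-maximality of $M_3$ then forces $\phi$ to be an isomorphism, i.e. $N=M_3$.

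The two main technical obstacles are: (a) verifying that $M_3$ is open in $N$, which requires controlling a BTZ-embedding at BTZ points via the local rigidity Proposition~\ref{prop:isom} (the image of a tube chart of $M_3$ near a BTZ point must coincide with a tube chart of $N$); and (b) the degenerate case $\tau_{\mathrm{bdy}}=+\infty$, i.e. an entirely new BTZ-line $\Delta$ of $N$ disjoint from $M_3$, where the inextendible causal curve $c=\Delta$ would be missed by $\Sigma$. This degenerate case must be ruled out; the strategy is to argue that $\Delta$ together with a small tubular neighborhood in $N$ already provides a genuine Cauchy-extension of $M_3$ (the Cauchy-surface $\Sigma$ extends continuously across the added BTZ-line by the distance-extension argument to singular points established in Section~\ref{subsec:smooth}), which contradicts Cauchy-maximality of $M_3$. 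The hard part of the proof plan lies in this elimination of ``purely new'' BTZ-lines, while the main generic case is handled by the causal-curve decomposition above.
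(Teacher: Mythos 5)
You should first note that the paper itself states this result only as a Conjecture and supplies no proof, so your argument has to stand entirely on its own. It does not, because the implication on which your whole reduction rests --- that every BTZ-extension of a Cauchy-maximal $\E^{1,2}_A$-manifold is automatically a Cauchy-embedding, hence trivial --- is refuted by the paper's own Example \ref{exam:mixed_extension}. There $M_1=Reg(\E^{1,2}_0)\setminus J^+(p)$ is Cauchy-maximal (it is by construction a maximal Cauchy-extension), yet $M_2=\E^{1,2}_0\setminus J^+(p)$ is a strictly larger BTZ-extension of it, obtained by adjoining the past singular ray $\{r=0,\tau<0\}$. That ray is a connected component of $\sing_0(M_2)$, hence by Lemma \ref{lem:past_BTZ} an inextendible causal curve of $M_2$, and it is entirely disjoint from $M_1$; it therefore misses every Cauchy-surface of $M_1$, so $M_1\hookrightarrow M_2$ is a BTZ-extension which is not a Cauchy-embedding. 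This is exactly your degenerate case (b), a ``purely new'' BTZ-line, and the example shows that it genuinely occurs for Cauchy-maximal spacetimes. Your proposed elimination of case (b) invokes nothing about $M_3$ beyond its Cauchy-maximality, so if it worked it would apply verbatim to $M_1$ and prove the example impossible. Concretely, the step where $\Sigma$ ``extends continuously across the added BTZ-line'' fails: when $\Sigma$ is complete, Lemma \ref{lem:surface_complete} forces $\tau_\Sigma\to+\infty$ along the new line, so $\Sigma$ accumulates nowhere on it; and in the general (non-Cauchy-complete) setting of the conjecture nothing guarantees $\Sigma$ limits onto a single point of the new line at all.

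The generic case you do treat --- curves meeting $M_3$, handled via the decomposition of Lemma \ref{lem:past_BTZ} and the future-closedness from Lemma \ref{lem:descente_BTZ} --- is sound and closely mirrors the causal analyses already carried out in Propositions \ref{prop:complement_complete} and \ref{prop:Theo_part_2_a}. But the entire content of the conjecture is concentrated in the case you defer: one must show that passing from $M_2$ to its maximal Cauchy-extension $M_3$ cannot create new catchable BTZ-lines, or past prolongations of existing ones, beyond what the maximal BTZ-extension at stage two already captured. Any viable proof must therefore exploit the BTZ-maximality of $M_2$ and analyse how $M_3\setminus M_2$ meets $\sing_0$, rather than the bare Cauchy-maximality of $M_3$. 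As written, the proposal proves only the easy half of the dichotomy and leaves the conjecture open.
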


  \section{Cauchy-completeness and extensions of spacetimes}
  \label{sec:complete_stability}
  Is the Cauchy-completeness of a space-time equivalent to the Cauchy-completeness of the maximal 
  BTZ-extension ?
  The answer is yes  and the whole section is devoted to the proof of this answer. We then aim
  at proving the following theorem.
  \begin{theo}[Cauchy-completeness Conservation] \label{theo:BTZ_Cauchy-completeness}Let $M$ be a globally hyperbolic 
  $\E^{1,2}_{A}$-manifold without BTZ point, the following are equivalent.
  \begin{enumerate}[(i)]
   \item $M$ is Cauchy-complete and Cauchy-maximal.
   \item There exists a Cauchy-complete and Cauchy-maximal  BTZ-extension of $M$.
   \item The maximal BTZ-extension of $M$ is Cauchy-complete and Cauchy-maximal.
  \end{enumerate}
  \end{theo}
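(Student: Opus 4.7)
The equivalence is proved via three implications: (iii)$\Rightarrow$(ii) is immediate, and the substantive content lies in (i)$\Rightarrow$(iii) and (ii)$\Rightarrow$(i). A unifying technical tool I will exploit throughout is an explicit dichotomy for spacelike graphs $\tau=f(r,\theta)$ inside a standard BTZ tube $\{r\le R,\,\tau\ge\tau_0\}\subset\BTZ$ (Definition \ref{def:BTZ}, Proposition \ref{prop:tube}): a graph terminating at the BTZ-line at $r=0$ with $f$ bounded has \emph{finite} induced length from its tip (the estimate in the proposition after Lemma \ref{lem:local_param}), whereas a spacelike surface whose $\tau$ blows up to $+\infty$ as $r\to 0$ has \emph{infinite} induced length -- as exemplified by the hyperboloid graph $f(r)=\tfrac{1+r^2}{2r}$, for which $1-2f'=1/r^2$ produces a logarithmic divergence of $\int\sqrt{1-2f'}\,\d r$. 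This lets me interchange compact caps through BTZ-points with cusp ends at $\tau\to+\infty$ without leaving the class of spacelike piecewise smooth Cauchy-surfaces.

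For (i)$\Rightarrow$(iii), let $\overline M$ denote the maximal BTZ-extension. To prove Cauchy-maximality of $\overline M$, take any Cauchy-extension $N$ of $\overline M$ and, without loss of generality, replace it by its own maximal Cauchy-extension via Theorem \ref{theo:cauchy_max_ext}, so that Proposition \ref{prop:tube} equips each BTZ-line of $N$ with a standard future half-tube. I then construct a Cauchy-surface of the open submanifold $N^{\mathrm{reg}}:=N\setminus\sing_0(N)$ by excising the compact disc $\Sigma_N\cap\T_\Delta$ through each $p_\Delta\in\Sigma_N\cap\Delta$ and pasting in its place a cusp end in $\T_\Delta\setminus\Delta$ modelled on the hyperboloid. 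This shows $N^{\mathrm{reg}}$ is globally hyperbolic and that $M\hookrightarrow N^{\mathrm{reg}}$ is a Cauchy-extension, so Cauchy-maximality of $M$ forces $N^{\mathrm{reg}}=M$; hence $N$ is a BTZ-extension of $M$, and BTZ-maximality of $\overline M$ yields $N=\overline M$. For Cauchy-completeness of $\overline M$, I start from a complete Cauchy-surface $\Sigma_M$ of $M$; the dichotomy forces the trace of $\Sigma_M$ inside each tube $\T_\Delta$ of a BTZ-line $\Delta$ of $\overline M$ to be a cusp end (any terminating graph would produce a Cauchy sequence without limit in $\Sigma_M$, contradicting completeness). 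I excise each cusp and paste a flat disc $\{\tau=\tau_0\}\subset\T_\Delta$ through a point of $\Delta$: the glued $\Sigma_{\overline M}$ is piecewise smooth and spacelike by construction, a Cauchy-surface of $\overline M$ by applying Lemma \ref{lem:past_BTZ} to each inextendible causal curve, and complete as a gluing along compact circles of a complete manifold-with-boundary and countably many compact discs.

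For (ii)$\Rightarrow$(i), let $N$ be a Cauchy-complete Cauchy-maximal BTZ-extension of $M$. Given any Cauchy-extension $M'$ of $M$, the spacetime $M'$ contains no BTZ-point: any hypothetical BTZ-line of $M'$ would be an inextendible causal curve required to meet the image of a Cauchy-surface of $M$, which lies in the no-BTZ region $M$, and the rigidity Proposition \ref{prop:isom} forbids the resulting type change. I then form the amalgamated sum gluing $M'$ and $N$ along $M$, in the spirit of the join $M_1\vee M_2$ of Section \ref{sec:extension_BTZ_2}, and verify Hausdorff-ness and global hyperbolicity by the same non-separation argument. The result is a Cauchy-extension of $N$, hence equals $N$ by hypothesis; so $M'\hookrightarrow N$, and since $M'$ has no BTZ-point while $N\setminus M=\sing_0(N)$, $M'=M$. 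For Cauchy-completeness of $M$, I reverse the modification: starting from a complete Cauchy-surface $\Sigma_N$ of $N$, I excise the compact disc through $p_\Delta$ in each $\T_\Delta$ and paste a cusp end modelled on $\tau=\tfrac{1+r^2}{2r}$ inside $\T_\Delta\setminus\Delta$. The resulting $\Sigma_M$ catches every inextendible causal curve of $M$ (those past-asymptoting to $\Delta$ are trapped by the cusp going to $+\infty$) and is complete because each cusp end has infinite intrinsic distance to its tip while the exterior is unchanged.

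The main obstacle is the simultaneous control of the Cauchy-surface property and of metric completeness during these local modifications. Spacelike-ness and acausality are preserved by construction, but checking that every inextendible causal curve of the modified spacetime meets the modified surface exactly once requires the causal decomposition of Lemma \ref{lem:past_BTZ} together with the monotonicity of the $r$-coordinate along causal curves (Lemma \ref{lem:BTZ_causal_curve}), while completeness of the induced length metric rests on the convergence/divergence of the radial integral $\int\sqrt{1-2f_r}\,\d r$ made explicit in the proposition after Lemma \ref{lem:local_param}.
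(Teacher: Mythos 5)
Your strategy is the same as the paper's (cap/cusp surgery on Cauchy-surfaces inside BTZ tubes, plus an amalgamated-sum argument for maximality), but two steps do not go through as written. The main one is in (i)$\Rightarrow$(iii): producing \emph{some} Cauchy-surface of $N^{\mathrm{reg}}=N\setminus\sing_0(N)$ does not show that $M\hookrightarrow N^{\mathrm{reg}}$ is a Cauchy-\emph{extension}. For that you need a Cauchy-surface \emph{of $M$} that is simultaneously a Cauchy-surface of $N^{\mathrm{reg}}$, and the surface you build by modifying $\Sigma_N$ need not even lie in $M$: both $\Sigma_N$ and the half-tubes $\T_\Delta$ live in $N$, which may strictly contain $\overline M$, so the pasted cusps (which run off to $\tau\to+\infty$) can leave $M$; and mere containment $M\subset N^{\mathrm{reg}}$ proves nothing (compare $\{\tau<0\}\subset Reg(\BTZ)$, whose Cauchy-surfaces are not Cauchy-surfaces of $Reg(\BTZ)$). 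The paper closes this by showing that the \emph{original} surface $\Sigma_0$ of $M$ remains a Cauchy-surface of $M_2\setminus\sing_0(M_2)$, via the comparison ``an inextendible causal curve inside $\T_\Delta$ meets $\Sigma_0\cap\T_\Delta$ iff it meets the pasted cap'' (Step 3 of Proposition \ref{prop:Theo_part_2_a}). A second, related omission affects both directions of your surgery: you never prove that the future half-tube around a BTZ ray of $\overline M$, minus the ray, is contained in $M$. Proposition \ref{prop:tube} provides such tubes only inside a Cauchy-\emph{maximal} ambient spacetime, which for $\overline M$ is precisely what is at stake; the paper needs the separate argument of Step 1 of Proposition \ref{prop:Theo_part_2_a}, which builds $\T'\cup\left(M_0\setminus J^+(\T'\setminus M_0)\right)$ and invokes uniqueness of the maximal Cauchy-extension to force $\T'\subset M_0$. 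Without this, $\Sigma_M\cap\T_\Delta$ cannot be parametrized as a graph over $\mathbb D_R^*$ and the excision is not performed inside $M$.

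Two smaller points. Your ``dichotomy'' is not symmetric: boundedness of $f$ does give finite radial length, but $f\to+\infty$ does not by itself give completeness (take $f=r^{-1/2}$: spacelike near $0$, $\tau\to+\infty$, yet $\int_0^R\sqrt{1-2f'}\,\d r<\infty$); you must use the quantitative criterion $1-2\partial_r f-\left(r^{-1}\partial_\theta f\right)^2\ge C^2/r^2$ of Lemma \ref{lem:surface_complete} together with its angular estimate — your model cusp does satisfy it. Finally, the cap you paste cannot be the flat disc $\{\tau=\tau_0\}$, whose boundary will not match the circle $\tau=\tau_\Sigma^R(\theta)$ on $\partial\T_\Delta$; you need the interpolating cap of Lemma \ref{lem:curve_extension}(ii).
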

  \setcounter{theo}{2}
    The proof decomposes into four parts. When taking a BTZ-extension, the Cauchy-surface changes. The proof of the 
    theorem  needs to modify Cauchy-surfaces in a controlled fashion. The first part 
    is devoted to some lemmas useful to construct good spacelike surfaces. The other two parts solve the causal issues 
    proving that the surfaces constructed using the first part are indeed Cauchy-surfaces. 
    Pieces are put together in the fourth part to prove the theorem.

  \subsection{Surgery of Cauchy-surfaces around a BTZ-line}\label{sec:surgery}
    We begin by an example illustrating the situation we will soon manage. 
    \begin{example}
    Consider $\E^{1,2}_0$ endowed with its coordinates $(\tau,r,\theta)$ and the Cauchy-surface $\Sigma:= \{\tau=1\}$. 
    The regular part of $\Sigma$,  $\Sigma^*:=Reg(\Sigma)$, 
    is not a Cauchy-surface of the regular part 
    of $\E^{1,2}_0$ since its Cauchy development is $Reg(\E^{1,2}_0)\setminus J^+(\{\tau=1,r=0\})$. 
    The problem is that a curve such as $\{\tau=2r+\tau_0,\theta=\theta_0 \}$ is causal, inextendible in $Reg(\BTZ)$ and doesn't intersects $\Sigma^*$ for $\tau_0>0$.     
    A solution consists in noticing that $\Sigma^*$ coincides with $\H^2_0 := \left\{\tau =\frac{1+r^2}{2r}\right\}$
    on $\{r=1,\tau=1\}$.
    Therefore, we can glue the piece of $\H^2_0$ inside the tube of radius 1 with the plane $\{\tau=1\}$ 
    outside the tube of radius 1 and get a complete Cauchy-surface $\Sigma_1$ of the regular part of $\E^{1,2}_0$. 
    See figure \ref{fig:collage_exemple} below.    
    \begin{figure}[h]
    \begin{tabular}{c|c}
    {\hfill A)}& {\hfill B)}\\
{\includegraphics[width=0.45\linewidth]{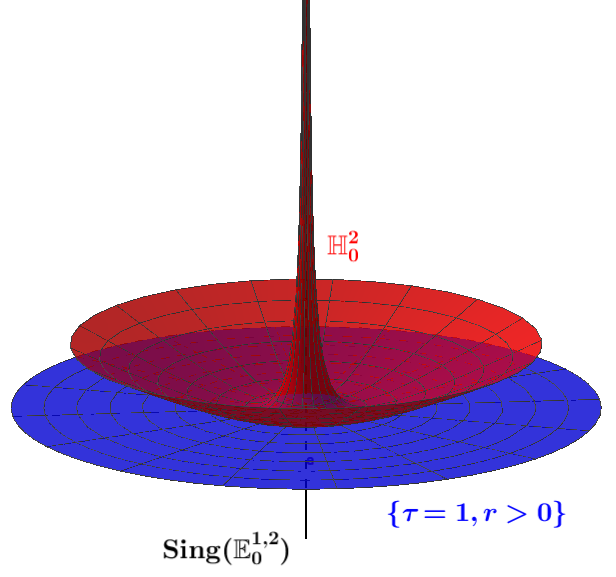} } & \includegraphics[width=0.45\linewidth]{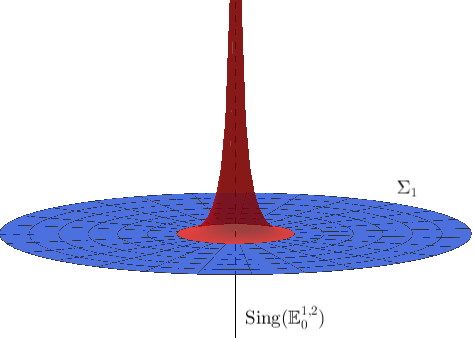}
    \end{tabular}

     \begin{caption}{Two acausal surfaces, the boundary of their Cauchy development, two different gluings.}
      \label{fig:collage_exemple}
      \begin{enumerate}[A)]
       \item The blue plane represents the surface $\Sigma^*=\{\tau=1,r>0\}$ and
      the red surface is $\H_0^2$.
      \item The gluing  $\Sigma_1$ of $\H^2_0 \cap \{r\leq 1\}$ with $\Sigma^*\cap\{r\geq 1\}$. It is a Cauchy-surface of $\BTZ$. 
      \end{enumerate}

     \end{caption}

    \end{figure}

   \end{example}
        
    Let $M$ be a Cauchy-complete spacetime. Starting from a complete Cauchy-surface $\Sigma$ of $M$, we need construct a complete Cauchy-surface  
    of $M\setminus \Delta$ where $\Delta$ is a BTZ line. This is done locally around the singular line : the intersection  
    of $\Sigma$ with the boundary of a tube neighborhood of $\Delta$ gives a curves and
    the second point of the main Lemma \ref{lem:curve_extension}  below show that such a curve 
    can be extended to a complete surface avoiding the singular line of $\E^{1,2}_0$. This procedure is the heart of the proof of $(ii)\Rightarrow (i)$ in Theorem \ref{theo:BTZ_Cauchy-completeness}.

    To obtain $(i)\Rightarrow (iii)$, half of the work consists in doing the opposite task.
    Let $M$ be a Cauchy-complete spacetime. Starting from a complete Cauchy-surface of $M$, we construct a complete Cauchy-surface of 
    its maximal BTZ extension by modifying locally a Cauchy-surface of $M$ around a singular line.
    We start from the intersection of the 
    Cauchy-surface of $M$ along the boundary of a tube around a singular line, this gives us a curve on a boundary of 
    a tube in $\E^{1,2}_0$.  The first point of the main Lemma \ref{lem:curve_extension}   below show that such a curve
    can be extended to a complete surface which cuts the singular line of $\E^{1,2}_0$.

    \begin{defi} Define  $\mathbb D_R$ the Euclidean disc of radius $R>0$ and  $\mathbb D_R^*$ the punctured  Euclidean disc of radius
    $R>0$.
     
     We identify frequently $\mathbb D_R$ with its embedding in $\{\tau=0,r\leq R\}$.
    \end{defi}

    \begin{lem}\label{lem:curve_extension} Let $\mathcal T$ be a closed future half-tube in $\E^{1,2}_0$ of radius $R$ and 
    let $\tau_{\Sigma}^R : \partial \mathbb D_R \rightarrow \R_+$ be a smooth function. Then 
    \begin{enumerate}[(i)]
     \item there exists a piecewise smooth function $\tau_\Sigma: \mathbb D_R\rightarrow \R_+$ extending $\tau_\Sigma^R$ which graph is acausal, spacelike and complete;
     \item there exists a piecewise smooth function $\tau_\Sigma:\mathbb D_R^*\rightarrow \R_+$ extending $\tau_\Sigma^R$ which graph is acausal, spacelike and complete.
    \end{enumerate}
    \end{lem}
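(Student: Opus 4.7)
The plan is to build $\tau_\Sigma$ explicitly by splitting (the punctured) $\mathbb D_R$ into an inner region carrying a canonical acausal spacelike model surface, and a collar annulus $\{R/2\leq r\leq R\}$ on which I interpolate smoothly to the boundary data. Throughout I use the cylindrical coordinates $(\tau,r,\theta)$ of Definition~\ref{def:BTZ}, in which the metric reads $-2\,\d\tau\,\d r+\d r^2+r^2\d\theta^2$. A direct computation gives the induced first fundamental form on the graph $\{\tau=f(r,\theta)\}$ as the matrix with entries $1-2f_r$, $-f_\theta$, $r^2$, so the graph is spacelike if and only if
\[
1-2f_r>0\qquad\text{and}\qquad (1-2f_r)\,r^2-f_\theta^2>0.
\]
I will further use that $r$ is non-decreasing along every future causal curve (Lemma~\ref{lem:BTZ_causal_curve}) and so is $\tau$ (from $2\tau'r'\geq (r')^2+r^2(\theta')^2$), together with a short quadratic-form computation showing that the determinant inequality above is exactly what forces $\tau-f(r,\theta)$ to be strictly increasing along every future causal tangent vector; this upgrades spacelikeness to acausality for any graph of the form considered.

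For part~(i), pick a constant $c>\max_\theta\tau_\Sigma^R(\theta)$ and a smooth cut-off $\chi:[0,R]\to[0,1]$ with $\chi\equiv 0$ on $[0,R/2]$, $\chi(R)=1$, and set
\[
f(r,\theta)=c+\chi(r)\bigl(\tau_\Sigma^R(\theta)-c\bigr).
\]
On $\{r\leq R/2\}$ the graph is the constant-$\tau$ leaf of the BTZ foliation, which by Remark~\ref{rem:foliation} is intrinsically a flat Euclidean disc, smooth across the BTZ line, spacelike, and metrically complete as a closed bounded subset. On the collar, $f_r=\chi'(\tau_\Sigma^R-c)\leq 0$, so $1-2f_r\geq 1$, and $|f_\theta|\leq\|(\tau_\Sigma^R)'\|_\infty$; choosing $c$ large enough makes $-2 f_r r^2=2\chi'(c-\tau_\Sigma^R)r^2$ dominate $f_\theta^2$ on the support of $\chi'$, and the determinant condition is strict everywhere. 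Completeness of the full graph, a topological closed disc, then follows from compactness.

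For part~(ii), replace the constant model by a scaled hyperboloid leaf
\[
f_0(r)=\frac{a+r^2}{2r},\qquad a>0.
\]
One computes $1-2 f_0'(r)=a/r^2$ so the induced metric on this pure leaf is $(a/r^2)\d r^2+r^2\d\theta^2$, which via $u=-\ln r$ becomes $a\,\d u^2+e^{-2u}\d\theta^2$ on a half-cylinder $u\in[\ln(2/R),+\infty)\times S^1$: acausal, spacelike, and metrically complete toward the removed origin. Glue to the boundary data with the same cut-off:
\[
f(r,\theta)=f_0(r)+\chi(r)\bigl(\tau_\Sigma^R(\theta)-f_0(R)\bigr).
\]
Writing $g(r,\theta)=f-f_0$, the spacelike inequality reduces to $a-2g_r r^2>g_\theta^2$, and choosing $a>\|(\tau_\Sigma^R)'\|_\infty^2$ (large enough that $f_0(R)>\max_\theta\tau_\Sigma^R$) gives $g_r\leq 0$ on the collar, exactly as in~(i); the full inequality holds uniformly. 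Completeness of the graph toward $r\to 0$ is supplied by the hyperboloid part, and compactness of the collar covers the remaining region.

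The main technical obstacle is the spacelike estimate on the collar annulus where $\tau_\Sigma^R$ may oscillate rapidly, but in both parts this is a single pointwise inequality depending linearly on the free parameter $c$ (respectively $a$), so it can always be satisfied by enlarging that parameter. Once spacelikeness is secured, acausality is automatic from the quadratic-form characterisation together with the monotonicity of $\tau$ and $r$ along future causal curves, and metric completeness follows from the compactness of the collar combined with the explicit completeness of the inner model (the Euclidean disc for~(i), the hyperboloid for~(ii)).
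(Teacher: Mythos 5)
Your overall strategy coincides with the paper's: realise the surface as an explicit graph over the (punctured) disc, reduce spacelikeness to the determinant inequality $1-2\partial_r\tau-\bigl(\tfrac1r\partial_\theta\tau\bigr)^2>0$, upgrade spacelike to acausal via the monotonicity of $\tau$ and $r$ along future causal curves (this is Lemma \ref{lem:surface_causal}), and obtain completeness from an explicit inner model plus a compact collar. The paper's interpolants are $\bigl(\tfrac{2r-R}{R}\bigr)^2\tau_\Sigma^R(\theta)+M\bigl(\tfrac1r-\tfrac1R\bigr)$ for the full disc and $\tau_\Sigma^R(\theta)+M\bigl(\tfrac1r-\tfrac1R\bigr)$ for the punctured one. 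Your part (ii) is correct and robust: because the hyperboloid term $f_0$ is added on the whole collar, $\delta$ receives the uniform contribution $a/r^2$, and $a-2g_rr^2-g_\theta^2\geq a-\|(\tau_\Sigma^R)'\|_\infty^2>0$ holds with no hypothesis on $\chi'$; this even gives $\delta\geq C^2/r^2$, so completeness also follows from Lemma \ref{lem:surface_complete}.

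Part (i), however, has a gap as written. On the collar the inequality to verify is
\[
r^2\Bigl(1+2\chi'(r)\bigl(c-\tau_\Sigma^R(\theta)\bigr)\Bigr)>\chi(r)^2\bigl((\tau_\Sigma^R)'(\theta)\bigr)^2,
\]
and the term you enlarge, $2\chi'(c-\tau_\Sigma^R)r^2$, is supported on $\mathrm{supp}\,\chi'$, whereas the obstruction $f_\theta^2=\chi^2((\tau_\Sigma^R)')^2$ lives on the possibly larger set where $\chi>0$. For an arbitrary smooth cut-off with a plateau, say $\chi\equiv\tfrac12$ on a subinterval of $(R/2,R)$, the condition there reads $r^2>\tfrac14((\tau_\Sigma^R)')^2$, which fails as soon as the boundary data oscillates fast enough, and no choice of $c$ repairs it; so your closing claim that the inequality ``depends linearly on the free parameter'' and can always be met by enlarging $c$ is false for the class of cut-offs you allow. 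The fix is easy but must be stated: either impose $\chi'>0$ on $(R/2,R]$ with $\chi'(R)>0$, note that near $r=R/2$ the inequality holds because $\chi\to0$ while $r^2\geq R^2/4$, and on the complementary compact set where $\chi$ is bounded below take $c$ large using $\min\chi'>0$; or, more robustly (and this is exactly what the paper does), add the radially decreasing term $M\bigl(\tfrac1r-\tfrac1R\bigr)$, whose contribution $2M/r^2$ to $\delta$ is uniformly positive on the collar. Finally, acausality of the inner piece $\{\tau=c,\ r\leq R/2\}$ across the BTZ point deserves the one-line remark that a causal curve meeting the singular line still has nondecreasing $\tau$, so it cannot return to the level $\tau=c$.
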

    Before proving Lemma \ref{lem:curve_extension}, we need to do some local analysis in a tube of $\E^{1,2}_0$. 
    We begin by a local condition for acausality. 
    
     \begin{lem} \label{lem:surface_causal} Let  $R>0$  and let $\mathcal T:=\{\tau>0, r\leq R\}$ be  a \textbf{closed} future half tube in $\BTZ$ 
     of radius $R$ in cylindrical coordinates. Let   $\tau_\Sigma\in \C^{1}( ]0,R] \times \R/2\pi\Z, \R_+^*)$ and $\Sigma = \mathrm{Graph}(\tau_\Sigma)=\{(\tau_\Sigma(r,\theta),r,\theta)  : (r,\theta) \in \mathbb D^*\}$, 
     then $(i)\Leftrightarrow (ii)\Leftrightarrow (iii)$.
\begin{enumerate}[(i)]
 \item $\Sigma$ is spacelike and acausal
 \item $\Sigma$ is spacelike
 \item  $\displaystyle 1-2\frac{\partial \tau_\Sigma}{\partial r}-\left(\frac{1}{r}\frac{\partial \tau_\Sigma}{\partial \theta}\right)^2 >0 $

\end{enumerate}
      \end{lem}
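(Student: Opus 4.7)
The plan is to first translate the conditions on $\tau_\Sigma$ into positive-definiteness of the induced first fundamental form, which immediately gives $(ii)\Leftrightarrow(iii)$ and makes $(i)\Rightarrow(ii)$ tautological; then, for $(iii)\Rightarrow(i)$, to promote pointwise spacelikeness to global acausality by showing that the function $F(\tau,r,\theta)=\tau-\tau_\Sigma(r,\theta)$ is strictly monotonic along every future causal curve.

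First I would parametrize $\Sigma$ by the map $(r,\theta)\mapsto(\tau_\Sigma(r,\theta),r,\theta)$, so that $T\Sigma$ is spanned by $(\partial_r\tau_\Sigma,1,0)$ and $(\partial_\theta\tau_\Sigma,0,1)$. Plugging these into $-2\,\d\tau\,\d r+\d r^2+r^2\d\theta^2$ yields the first fundamental form
\[
g_\Sigma=\begin{pmatrix} 1-2\partial_r\tau_\Sigma & -\partial_\theta\tau_\Sigma\\ -\partial_\theta\tau_\Sigma & r^2\end{pmatrix}.
\]
Since $r>0$, Sylvester's criterion makes $\Sigma$ spacelike iff $\det g_\Sigma>0$, i.e.\ iff $r^2(1-2\partial_r\tau_\Sigma)-(\partial_\theta\tau_\Sigma)^2>0$, which after division by $r^2$ is exactly $(iii)$. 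Conversely $(iii)$ forces $1-2\partial_r\tau_\Sigma>(\partial_\theta\tau_\Sigma/r)^2\geq 0$, so both minors are positive. This gives $(ii)\Leftrightarrow(iii)$, and $(i)\Rightarrow(ii)$ is immediate.

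For the implication $(iii)\Rightarrow(i)$, I would let $c=(\tau_c,r_c,\theta_c)$ be a future causal curve lying in the regular locus of $\mathcal T$. By Lemma~\ref{lem:BTZ_causal_curve}, $r_c'\geq 0$, and the causal condition reads $2\tau_c'r_c'\geq(r_c')^2+r_c^2(\theta_c')^2$. On a segment where $r_c'>0$, combining this inequality with the definition of $F$ gives
\begin{align*}
2r_c'(F\circ c)' &\geq (r_c')^2+r_c^2(\theta_c')^2-2r_c'\bigl(\partial_r\tau_\Sigma\, r_c'+\partial_\theta\tau_\Sigma\,\theta_c'\bigr)\\
&=(r_c')^2(1-2\partial_r\tau_\Sigma)-2r_c'\theta_c'\,\partial_\theta\tau_\Sigma+r_c^2(\theta_c')^2.
\end{align*}
The right-hand side is a quadratic form in $(r_c',\theta_c')$ whose discriminant $4(\partial_\theta\tau_\Sigma)^2-4r_c^2(1-2\partial_r\tau_\Sigma)$ is negative exactly by $(iii)$, while its leading coefficient $1-2\partial_r\tau_\Sigma$ is positive; thus the form is positive definite and $(F\circ c)'>0$ whenever $r_c'>0$. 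On a segment where $r_c'=0$, the causal inequality forces $\theta_c'=0$, hence $c'=\tau_c'\partial_\tau$; future-orientation gives $\tau_c'\geq 0$ and thus $(F\circ c)'=\tau_c'\geq 0$, with strict inequality unless $c'$ vanishes.

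Putting these two cases together shows that $F\circ c$ is non-decreasing and strictly increasing on the (dense) set where $c'\neq 0$, so $c$ meets $\Sigma=F^{-1}(0)$ in at most one point. To handle causal curves of $\mathcal T$ that meet $\sing(\mathcal T)$, I would invoke Lemma~\ref{lem:BTZ_causal_curve} one more time to decompose $c=\Delta\cup c^0$ with $\Delta\subset\sing(\mathcal T)$ in the past of the regular piece $c^0$; since $\Sigma$ is disjoint from $\sing(\mathcal T)$, any intersection with $\Sigma$ lies on $c^0$, to which the monotonicity argument applies verbatim. The main obstacle is the discriminant computation and the bookkeeping for the limiting case $r_c'=0$; both are resolved cleanly thanks to the very rigid structure of future causal vectors in $\BTZ$ provided by Lemma~\ref{lem:BTZ_causal_curve}.
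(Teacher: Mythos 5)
Your proposal is correct and follows essentially the same route as the paper: the equivalence $(ii)\Leftrightarrow(iii)$ via the induced first fundamental form (your determinant criterion is the paper's completed square), and $(iii)\Rightarrow(i)$ by showing $f(s)=\tau_c(s)-\tau_\Sigma(r_c(s),\theta_c(s))$ is strictly increasing along future causal curves, with the same case split on $r_c'>0$ versus $r_c'=0$ and the same discriminant estimate. Your added remark that the quadratic form bounding $2r_c'f'$ is exactly $g_\Sigma$ itself, and the explicit reduction of singular-line-meeting curves via Lemma \ref{lem:BTZ_causal_curve}, are minor clean-ups of the paper's argument rather than a different method.
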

      \begin{proof}
      Beware that spacelike is a local condition but acausal is a global one. The implication $(i)\Rightarrow (ii)$ is obvious. 
      
            Writing   $\delta =\left(1-2\frac{\partial \tau_\Sigma}{\partial r}-\left(\frac{1}{r}\frac{\partial \tau_\Sigma}{\partial \theta}\right)^2\right)$  
            from direct computations : 
            $$\d s_\Sigma^2 =\delta \d r^2 + \left(\frac{1}{r}\frac{\partial\tau_\Sigma }{\partial \theta}\d r-r \d \theta \right)^2.$$
      and  let $c(s)=\tau_\Sigma(r(s), \theta(s)),s\in ]s_*,s^*[$  be some path on $\Sigma$, then : 
      $$\frac{\d s_{c}^2}{\d s^2} = (r')^2\delta(s) +\left(\frac{\partial\tau_\Sigma}{\partial \theta}\frac{r'}{r}-r\theta' \right)^2 $$
      $\Sigma$ is spacelike iff its Riemann metric is postive definite iff $\delta>0$ thus $(ii)\Leftrightarrow (iii)$. 
 
      To prove  $(iii)\Rightarrow (i)$ take a smooth future causal curve $c=(\tau,r,\theta)$ such that $c(0)\in \Sigma$, i.e. $\tau_\Sigma(r(0),\theta(0))=\tau(0)$.
      Since $\tau'$ is increasing, reparametrizing $c$ if necessary, we can assume $\tau'>0$.
      Let $f:s\mapsto \tau(s)-\tau_\Sigma(r(s),\theta(s))$ so that $f(s)=0$ if and only if $c(s)\in \Sigma$,  notice $f(0)=0$. 
      On the one hand, since $c$ is causal, we have $$r'\geq 0 \quad\text{and}\quad 2\tau'(s)r'(s)\geq (r')^2+r^2(\theta')^2.$$ 
      On the other hand, using $\delta>0$, if $r'>0$
      \begin{eqnarray}
       2\left(\frac{\d}{\d s}\tau_\Sigma(r,\theta) \right)r'&=& 2\left(\frac{\partial \tau_\Sigma}{\partial \theta}\theta'+\frac{\partial \tau_\Sigma}{\partial r} r'\right)r' \\
       &<&2\frac{\partial \tau_\Sigma}{\partial \theta}\theta'r'+(r')^2-\left(\frac{r'}{r}\frac{\partial \tau_\Sigma}{\partial \theta} \right)^2\\  
       &\leq & -\frac{\left(2\theta' r'\right)^2-4\left(-\frac{(r') ^2}{r^2}\right)(r')^2}{-4\frac{(r')^2}{r^2}}\\
       &=& r^2 (\theta')^2+(r')^2.
      \end{eqnarray}
      Let $s \in \R$, if $r'(s)>0$ then the computation above shows $f'(s)>0$. If $r'(s)=0$ then $\theta'(s)=0$ 
      thus $f'(s)=\tau'(s)>0$. Thus $f$ is increasing, thus injective. Finally $f$ cannot be naught twice and 
      $c$ cannot intersect $\Sigma$ twice. $\Sigma$ is thus acausal.
      \end{proof}

      \begin{lem}[Completeness criteria]\label{lem:surface_complete} Using the same notation as in Lemma \ref{lem:surface_causal} we have :
      \begin{enumerate}

      \item $\Sigma$ is spacelike and complete  if  \\ $\exists C>0, \forall (r,\theta)\in \mathbb D_R^*,$ 
	$$ 1-2\frac{\partial \tau_\Sigma}{\partial r}-\left(\frac{1}{r}\frac{\partial \tau_\Sigma}{\partial \theta}\right)^2\geq\frac{C^2}{r^2}$$ 
	Furthermore, in this case  the Cauchy development of $ \Sigma$ is $\T\setminus \Delta$. 
      \item  If $\Sigma$ is spacelike and complete then, 
		$$ \lim_{(r,\theta) \rightarrow 0}\tau_\Sigma(r,\theta) = +\infty$$
      \end{enumerate}
      \end{lem}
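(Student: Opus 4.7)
The proof hinges on the expression of the induced metric obtained during the proof of Lemma \ref{lem:surface_causal}, $ds_\Sigma^2 = \delta\, dr^2 + (r^{-1}\partial_\theta\tau_\Sigma\, dr - r\, d\theta)^2$, which yields $\|c'\|^2 \geq \delta\,(r')^2$ along any smooth curve $c=(r(s),\theta(s))$ in $\Sigma$. For the spacelike-and-complete statement, spacelikeness is immediate from Lemma \ref{lem:surface_causal} since $\delta > 0$, and the hypothesis gives $\|c'\| \geq C|r'|/r$, so any curve in $\Sigma$ joining points with radial coordinates $r_0,r_1$ has length at least $C\lvert\ln(r_0/r_1)\rvert$; this forbids any Cauchy sequence whose radial coordinate accumulates at $0$, while $\Sigma\cap\{r\geq\varepsilon\}$ embeds into a compact slice of $\T$ (as $\tau_\Sigma$ is continuous on $\mathbb D_R^*$) and is therefore complete.

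For the Cauchy-development equality $D(\Sigma)=\T\setminus\Delta$, I first integrate the pointwise consequence $\partial_r\tau_\Sigma \leq (1-C^2/r^2)/2$ of the hypothesis from a fixed $r_0$ to conclude that $\tau_\Sigma(r,\theta)\to +\infty$ uniformly in $\theta$ as $r\to 0^+$. Then, for any inextendible causal curve $c$ in $\T\setminus\Delta$, Lemma \ref{lem:BTZ_causal_curve} makes $r$ nondecreasing, so the past end has $\tau\to 0^+$ (possibly together with $r\to 0^+$) and the future end has $\tau\to +\infty$ with $r$ bounded below. The continuous function $f(s):=\tau(s)-\tau_\Sigma(r(s),\theta(s))$ is thus negative at the past end and positive at the future end, and the intermediate value theorem furnishes a crossing, unique by acausality from Lemma \ref{lem:surface_causal}; points of $\Delta$ are excluded from $D(\Sigma)$ because $\Delta$ itself is an inextendible causal curve that never meets $\Sigma$.

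For the limit statement, I proceed by contraposition. Assume there exist $r_n \to 0^+$ and $\theta_n$ with $\tau_\Sigma(r_n,\theta_n) \leq M$, and extract a subsequence with $\theta_n \to \theta_\infty$. The spacelike condition $1-2\partial_r\tau_\Sigma > 0$ integrated over $[r_n,r_*]$ gives $\tau_\Sigma(r_*,\theta_n) < M + R/2$ for every $r_*\in\,]r_n,R]$, and continuity of $\tau_\Sigma$ extends this to $\tau_\Sigma(r_*,\theta_\infty) \leq M+R/2$ for every $r_*\in\,]0,R]$. Applying Cauchy--Schwarz to the radial length at $\theta=\theta_\infty$ yields, for $0<r<r_*$, $\left(\int_r^{r_*}\sqrt{1-2\partial_r\tau_\Sigma}\,ds\right)^2 \leq (r_*-r)[(r_*-r)-2(\tau_\Sigma(r_*,\theta_\infty)-\tau_\Sigma(r,\theta_\infty))] \leq (r_*-r)(r_*-r+2M+R)$, which tends to $0$ as $r,r_* \to 0^+$. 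Hence any sequence $p_k=(r_k,\theta_\infty,\tau_\Sigma(r_k,\theta_\infty))$ with $r_k \to 0$ is Cauchy in $\Sigma$ yet has no limit there (its ambient limit lies on $\Delta$), contradicting completeness. The main obstacle is precisely this uniformity step: converting a merely pointwise failure at sequential $(r_n,\theta_n)$ into a genuine Cauchy sequence on $\Sigma$ requires both the spacelike inequality (to transport the bound onto the cluster direction $\theta_\infty$) and Cauchy--Schwarz (to control actual Riemannian lengths from an $L^2$-type bound).
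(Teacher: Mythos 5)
Your argument is correct in its overall structure but departs from the paper's proof in two places, in both cases arguably more cleanly. For completeness under the hypothesis $\delta\geq C^2/r^2$, the paper shows that every finite-length curve is extendible: it first gets $r(s)\to r_\infty>0$ from the logarithmic length bound (as you do), but then must also prove that $\theta(s)$ converges, which it does via an integration-by-parts estimate on $\int|\theta'|$; your route — a Cauchy sequence cannot have $r$ accumulating at $0$, hence lives in the compact graph over an annulus $\{\varepsilon\leq r\leq R\}$ and therefore converges — sidesteps the $\theta$-convergence issue entirely. For statement 2, the paper argues directly: given $(r_n,\theta_n)\to 0$ it builds a single inextendible curve on $\Sigma$ through those points with $|r\theta'|$ bounded, uses completeness to force its length to be infinite, and deduces $\int|\tau_c'|=+\infty$ hence $\tau_c\to+\infty$ since $\tau_c'\leq 5/2$. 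Your contrapositive — transport the bound $\tau_\Sigma(r_n,\theta_n)\leq M$ onto the cluster direction $\theta_\infty$ via $\partial_r\tau_\Sigma<1/2$, then use Cauchy--Schwarz on the radial length to produce a non-convergent Cauchy sequence — is a genuinely different and somewhat tighter argument; it also avoids the slightly delicate interpolation of $\theta$ between the $\theta_n$'s that the paper needs.

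The one place where your proposal is thinner than it should be is the Cauchy-development claim $D(\Sigma)=\T\setminus\Delta$, where you follow the same intermediate-value strategy as the paper but assert the behaviour at the two ends of an inextendible causal curve without proof. At the future end, $\tau\to+\infty$ is not automatic from inextendibility: the paper derives it by showing that if $\tau^*<\infty$ then the causality inequalities give $|\theta'|\leq \tau_c'/r_c(0)$ and $|r_c'-\tau_c'|\leq\tau_c'$, so all three coordinates are $L^1$-controlled, the curve converges to a point of the closed tube with $r^*>0$, and is therefore extendible — a contradiction. This computation is the actual content of that half of the argument and should appear. At the past end, your claim that $\tau\to 0^+$ always is false: the correct dichotomy is $\tau_*=0$ \emph{or} $r_*=0$, and the curve $\{\tau=2r+\tau_0\}$ (the paper's own example in Section \ref{sec:surgery}) realizes the case $r_*=0$, $\tau_*=\tau_0>0$. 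Your conclusion that $f=\tau-\tau_\Sigma$ is eventually negative survives in that case too, but only via the uniform divergence $\tau_\Sigma\to+\infty$ as $r\to 0$ that you established earlier, not via $\tau\to 0^+$; the case split and the use of $\inf\tau_\Sigma>0$ in the case $\tau_*=0$, $r_*>0$ need to be made explicit as the paper does.
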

      \begin{proof}We use the same notations as in the proof of Lemma \ref{lem:surface_causal}. We insist on the fact that 
      $\T$ is \textbf{closed}, which means for instance that $\Sigma$ has a boundary parametrised by $\partial \mathbb D_R$. It also 
      means that a curve ending on the boundary of $\T$ can be extended since it has an ending point.
      \begin{enumerate}
       \item

      Let $C>0$  be such as $\delta>\frac{C^2}{r^2}$. 
      It suffice to prove that a finite length curve in $\Sigma$ is extendible.
      Let $\gamma:\R\rightarrow \Sigma$ be  a finite length piecewise smooth curve on 
      $\Sigma$. Write $\gamma(s)=(\tau_\Sigma(r(s),\theta(s)),r(s),\theta(s))$ for $s\in \R$ and $l(\gamma)$ its length.
      Since $l(\gamma)\geq \int_{\R} |r'(s)|\frac{C}{R} \d s$ and $l(\gamma)<+\infty$, 
      then $r'\in \L^1$ and $r$ converges as $s\rightarrow +\infty$, let
         $r_\infty:= \lim_{s\rightarrow +\infty} r(s)$. 
      
      For all $a\in \R$, $l(\gamma) \geq \left|\int_0^a \frac{C|r'(s)|}{r(s)}\d s \right|\geq C \left|\ln\left(\frac{r(0)}{r(a)}\right) \right|$. 
      Thus $$\forall a \in \R, ~~r(a)\geq r(0) e^{-\frac{1}{C}l(\gamma)}>0$$ and thus $r_\infty>0$. 
 
      Take $A >0$ such as $\forall s\geq A, r(s)\in [r_*,r^*]$ with $r_*=r_\infty/2$ and $r^*=(r_\infty+R)/2$
      then for all $b\geq a \geq  A$ :
      \begin{eqnarray}l(\gamma)&\geq& \int_{[a,b]} r \left|\frac{\partial \tau}{\partial \theta}\frac{r'}{r^2}-\theta' \right| \\
	&\geq& \int_{[a,b]} r_*\left|\frac{\partial \tau}{\partial \theta}\frac{r'}{r^2}-\theta' \right|\\
	&\geq&  \int_{[a,b]} r_*\left(\left|\theta' \right|-\left|\frac{\partial \tau}{\partial \theta}\frac{r'}{r^2}\right|\right)\\
	&\geq&  r_*\int_{[a,b]}
	  \left|\theta' \right|
	    -r_*
	    \left(\max_{(r,\theta) \in [r_*,r^*]\times \R/2\pi\Z}\left|\frac{\partial \tau}{\partial \theta}\right|\right) \  
	    \int_{[a,b]}\left|\frac{r'}{r^2}\right|   
      \end{eqnarray}
      By integration by part, noting $F$ a primitive of $|r'|$, and for some constant $C'>0$. 
      	$$ l(\gamma)\geq r_*\int_{[a,b]}
	  \left|\theta' \right|
	    -C'\left( \left[ \frac{F}{r^2}  \right]_a^b+2\int_a^b \frac{F r'}{r^3}\right)$$
      Since $\int_\R |r'|<+\infty$, $F$ is bounded and can be chosen positive. Set $B= \sup_{s\in \R} R(s)$, thus for some $C'',C'''>0$, 
      $$\forall b>A, ~~~ l(\gamma)+ C'' \geq C'''\int_{[a,b]}|\theta'| $$ 
      Which proves that $\int_{[a,+\infty[}|\theta'| \d s<+\infty$, so that $\theta(s)$ converges as $s\rightarrow +\infty$. 
	Consequently, $\tau(r,\theta)$ converges in $\Sigma$. Since $\T$ is \textbf{closed}, the curve $\gamma$ is then extendible.
	We conclude that $\Sigma$ is complete.
	\\ \ \\
      
      Let $c=(\tau_c,r_c,\theta_c):\R\rightarrow \mathcal T \setminus \Delta$  be an inextendible future 
	oriented causal curve.
	We must show that $c$ intersects $\Sigma$. Since $c$ is future oriented, $\tau_c$ is increasing and $r_c$ is non-decreasing.
	Both functions have	
	then limits at $\pm\infty$. Let   $r^*=\lim_{s\rightarrow +\infty} r_c(s)$,  $r_*=\lim_{s\rightarrow -\infty} r_c(s) $, 
	$\tau_* =\lim_{s\rightarrow -\infty}\tau_c(s) $ and   $\tau^* =\lim_{s\rightarrow +\infty}\tau_c(s) $. 
	Since $r_c$ is non-decreasing, $r^*>0$ and since $\tau_c$ is increasing, $\tau'_c>0$. 
	Assume $\tau^*<\infty$, then $\tau'_c \in L^1([0,\infty[)$. We have on $[0,+\infty[$:
	\begin{eqnarray}
	  (r_c')^2 +r_c ^2  (\theta')^2 -2 r_c' \tau_c' &\leq& 0 \\ 
	  (\theta')^2 &\leq& \frac{(\tau_c')^2-(\tau_c' -r_c')^2}{r_c^2} \\
	  |\theta'|&\leq & \frac{1}{r_c(0)}\tau_c'
	\end{eqnarray}
      Thus $\theta' \in \L^1([0,+\infty[)$  and $\theta$ has a limit at $+\infty$. The same way, we have :
      
      $$ |r_c'-\tau'_c| \leq \tau_c'$$ 
      Thus $(r_c'-\tau_c')\in L^1([0,\infty[)$ and so is $r'_c$.  Since $r$ has a non zero  limit at $+\infty$ 
      and $\T$ is \textbf{closed}, $c$ is extendible ; 
      therefore, $\tau^*=+\infty$.
      
      Since $r^*\in ]0,R]$ and since $\tau^*=+\infty$, 
      $$\exists s_0 \in \R, \forall s> s_0, ~~ \tau_c(s)> \left(\max _{[r^*/2,  r^*] \times \R/2\pi\Z} \tau_\Sigma\right)  \geq \tau_\Sigma(r_c(s),\theta_c(s))  $$
      
      Similar arguments can be used to prove that either  $\tau_*=0 $ or  $r_*=0$. Furthermore, 
      one may check that the assumption implies that $\lim_{r\rightarrow 0} \left( \min_{\theta \in \R/2\pi\Z}\tau_\Sigma(r,\theta)\right) =+\infty$. 
      This implies that $\min \tau_\Sigma >0$.

      Assume $\tau_*=0$, since $\min \tau_\Sigma>0$, we have : 
      $$\exists s_0 \in \R, \forall s< s_0, ~~\tau_c(s)< \min \tau_\Sigma \leq \tau_\Sigma(r_c(s),\theta_c(s))$$
      If on the contrary we assume $\tau_*>0$ and $r_*=0$ then 
      $$\exists r\in \R_+^*, \min_{]0,r]\times \R/2\pi\Z} \tau_\Sigma > \tau_*$$
      For such an $r\in \R_+^*$, $$\exists s_0\in \R, \forall s<s_0,~~ \tau_c(s)<\tau_\Sigma(r_c(s),\theta_c(s)) $$
	
	In any case, by continuity, there exists $s\in \R$ such that $\tau_c(s)= \tau(r_c(s),\theta_c(s))$ and thus 
	such that $c(s)\in \Sigma $.

   \item    Since $\Sigma$ is spacelike the point $(ii)$ of Lemma \ref{lem:surface_causal} ensures that 
      $$1-2\frac{\partial \tau_\Sigma}{\partial r}-
      \left(\frac{1}{r} \frac{\partial \tau_\Sigma}{\partial \theta}\right)^2 \geq 0$$
      on $]0,R]\times \R/2\pi\Z$. Consider a sequence $(r_n,\theta_n)\rightarrow 0$, we assume $r_{n+1}<\frac{1}{2}r_{n}$, one can construct
      an inextendible piecewise continuously differentiable curve $c=(\tau_c,r_c,\theta_c):]0,R]\rightarrow \Sigma$ such that
      \begin{itemize}
       \item  $\forall s\in ]0,R],~r_c(s)=s$
       \item  $\forall n\in \N,~\theta_c(r_n)=\theta_n$
       \item  $\forall r\in ]0,R],~ |\theta'_c(r)|\leq \frac{2}{r_n} $
      \end{itemize}		
      Writing $l(c)$ the length of $c$, we have :
      \begin{eqnarray}
      l(c)&=& \int_{0}^{R} \sqrt{1+r^2\theta_c'(r)^2-2 \tau'_c(r)}\\
       &\leq&  \int_{0}^{R} \sqrt{5-2 \tau'_c(r)}
      \end{eqnarray}
      The integrand is well defined since  $1+r^2\theta_c'(r)^2-2 \tau_c'(r)>0$. 
      We deduce in particular that $\tau_c'\leq 5/2$ and thus $-\tau_c'\geq |\tau'_c|-5$. By completeness of $\Sigma$, the length $l(c)$ of $c$ is infinite thus 
      $\int_{0}^{R}\sqrt{|\tau_c'|}=+\infty$ and thus $\int_0^{+\infty}|\tau'_c|=+\infty$.
      Finally, 
      $$\lim_{n\rightarrow +\infty}{\tau_\Sigma(r_n,\theta_n)}=\lim_{r\rightarrow 0}\tau_c (r)=\int_0^R\left(-\tau_c'\right)+ \tau(R) \geq \int_0^{R}\left(|\tau_c'| -5\right)+\tau(R)=+\infty$$

        \end{enumerate}
      \end{proof}

      \begin{proof}[Proof of Lemma \ref{lem:curve_extension}] \
      	  \begin{enumerate}[(i)]
	   \item Define $\displaystyle\tau_\Sigma (r,\theta)= \tau_\Sigma^R(\theta)+M\left(\frac{1}{r}-\frac{1}{R}\right)$
		 with $\displaystyle M= 1+ \max_{\theta\in \R/2\pi\Z}\left| \frac{\partial \tau_\Sigma^R}{\partial \theta}\right|^2$.
		 \\Then : $ \frac{\partial \tau_\Sigma}{\partial \theta} = \frac{\partial \tau_\Sigma^R}{\partial \theta}$ and 
		 $\frac{\partial \tau_\Sigma}{\partial r} = -\frac{M}{ r^2}$. 
		 So that : 
		 \begin{eqnarray}
		 \delta&=&1-\left(-\frac{M}{ r^2} \right) -\frac{1}{r^2}\left(\frac{\partial \tau_\Sigma^R}{\partial \theta} \right)^2\\
		 &=&1+ \frac{M-\left(\frac{\partial \tau_\Sigma^R}{\partial \theta} \right)^2}{r^2}\\ 
		 &=&1+ \frac{1+ \max_{\theta\in \R/2\pi\Z}\left| \frac{\partial \tau_\Sigma^R}{\partial \theta}\right|^2-\left(\frac{\partial \tau_\Sigma^R}{\partial \theta} \right)^2}{r^2}\\ 
		 &>&\frac{1}{r^2}
		 \end{eqnarray}
		  Therefore, the surface $\Sigma:=\mathrm{Graph}(\tau_\Sigma)$ is spacelike and complete. 
	  \item Define $$\tau_\Sigma(r,\theta)=\left\{\begin{array}{ll}
	                                               \left(\frac{2r-R}{R}\right)^2\tau^R_\Sigma(\theta)+M\left(\frac{1}{r}-\frac{1}{R}\right)& \text{If } r\in [R/2,R] \\
	                                               \frac{M}{R}& \text{If } r\in[0,R/2]
	                                              \end{array}
	                                              \right.$$
	        where $M$ is big enough so that the causality condition is satisfied on $[R/2,R]\times \R/2\pi\Z$.
	        The graph of $\tau_\Sigma$ is spacelike and compact.
	  \end{enumerate}
       
      \end{proof}

  \subsection{Cauchy-completeness and de-BTZ-fication}\label{sec:de-BTZ}
  
     We give ourselves $M$ a globally hyperbolic $\E^{1,2}_A$ spacetime for some $A\subset \R_+$. 
    One can  check that taking a BTZ line away doesn't destroy global hyperbolicity.
    \begin{rem} $M\setminus \sing_0$ is globally hyperbolic.
    \end{rem}
    \begin{proof} $M$ is causal and so is $M\setminus \sing_0$.
     Let $p,q\in M\setminus \sing_0$, consider a future causal curve $c$ from $p$ to $q$ in $M$. By Proposition \ref{lem:BTZ_causal_curve}, we have a decomposition 
     $c=\Delta\cup c^0$. Then $p\in \Delta$ or $\Delta=\emptyset$ and since $p\notin \sing_0$, $\Delta=\emptyset$,  then  $c\subset M\setminus \sing_0$. We deduce  
     that the closed diamond from $p$ to $q$ in $M\setminus \sing_0$ is the same as the one in $M$. The latter is compact by global hyperbolicity of $M$,
     then so is the former.
    \end{proof}
    
    We aim at proving the $(ii)\Rightarrow (i)$ of Theorem \ref{theo:BTZ_Cauchy-completeness}.
    
    \begin{prop} \label{prop:Theo_part_1}
      If $M$ is Cauchy-complete and Cauchy-maximal then so is $M\setminus \sing_0$. 
    \end{prop}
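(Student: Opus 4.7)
The plan is to prove Cauchy-completeness and Cauchy-maximality of $M\setminus\sing_0$ separately, using surgery on Cauchy-surfaces for the first and a BTZ-line gluing argument for the second.

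For completeness, I would start with a piecewise smooth, spacelike Cauchy-surface $\Sigma$ of $M$ that is metrically complete, guaranteed by Theorem \ref{theo:smooth} together with the hypothesis. By Proposition \ref{prop:tube}, each connected component $\Delta_i$ of $\sing_0(M)$ admits a future half-tube neighborhood $\mathcal{T}_i$ of some constant radius $R_i$, and shrinking these radii I may take the tubes pairwise disjoint. By Lemma \ref{lem:local_param}, the intersection $\Sigma\cap\partial\mathcal{T}_i$ is the graph of a smooth function $\tau_\Sigma^{R_i}$ on $\partial\mathbb{D}_{R_i}$ in the standard coordinates on $\mathcal{T}_i$. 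Lemma \ref{lem:curve_extension}(ii) then extends this to a function on $\mathbb{D}_{R_i}^*$ whose graph $\Sigma_i$ is spacelike, acausal and complete in $\mathcal{T}_i\setminus\Delta_i$. Replacing $\Sigma\cap\mathcal{T}_i$ by $\Sigma_i$ for every $i$ produces a piecewise smooth, spacelike surface $\Sigma'\subset M\setminus\sing_0$.

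The main obstacle will be to show that $\Sigma'$ is a complete Cauchy-surface of $M\setminus\sing_0$. Metric completeness follows from completeness of $\Sigma$ outside the tubes, completeness of each $\Sigma_i$ inside its tube, and the fact that any finite-length path crosses tube boundaries finitely many times. For the Cauchy property, given a future-inextendible causal curve $c$ in $M\setminus\sing_0$, I will view $c$ inside $M$ and distinguish two scenarios. If $c$ remains inextendible in $M$, it meets $\Sigma$ exactly once, and either the intersection lies outside every tube and so is also a point of $\Sigma'$, or the intersection lies inside some $\mathcal{T}_i$, in which case Lemma \ref{lem:surface_complete}(1) forces a unique intersection with $\Sigma_i$. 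If instead $c$ can be extended in $M$, Lemma \ref{lem:past_BTZ} shows the added past endpoint must lie on some $\Delta_i$, so the tail of $c$ sits in $\mathcal{T}_i\setminus\Delta_i$ and approaches $\Delta_i$; Lemma \ref{lem:surface_complete}(1) again yields exactly one intersection with $\Sigma_i$. Global acausality of $\Sigma'$, which is local on each piece and patched by disjointness of the tubes, rules out further crossings.

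For Cauchy-maximality, I will argue by contradiction. Given a hypothetical non-trivial Cauchy-extension $\psi:M\setminus\sing_0\hookrightarrow N$, I will construct a Cauchy-extension $\tilde N$ of $M$ by gluing each tube $\mathcal{T}_i\subset M$ onto $N$ along $\psi|_{\mathcal{T}_i\setminus\Delta_i}$, in the same fashion as the construction of $M_2$ in the proof of Proposition \ref{prop:tube}. The $\E^{1,2}_A$-atlas on $\tilde N$ and the causal analysis of inextendible curves carry over from that proof, and the delicate point is Hausdorffness of $\tilde N$: I will show that a pair of non-separated points would force, via Proposition \ref{prop:isom}, a local isometric identification between a neighborhood of $\Delta_i\cap\mathcal{T}_i$ and a singular patch of $N$ outside $\psi(M\setminus\sing_0)$, and then derive a contradiction by a diamond-compactness argument in the spirit of Lemmas \ref{lem:separation_BTZ} and \ref{lem:descente_BTZ}. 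Once $\tilde N$ is confirmed to be a globally hyperbolic $\E^{1,2}_A$-manifold, applying Lemma \ref{lem:curve_extension}(i) to a Cauchy-surface of $N$ inside the tube charts produces a Cauchy-surface of $\tilde N$, so $\tilde N$ is a Cauchy-extension of $M$; the Cauchy-maximality of $M$ then forces $\tilde N=M$, whence $N=\psi(M\setminus\sing_0)$ as required.
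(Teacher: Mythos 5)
Your overall strategy is the same as the paper's, which proves the two halves as Propositions \ref{prop:complement_complete} and \ref{prop:cauchy_max}: surgery on a complete spacelike Cauchy-surface inside the tubes of Proposition \ref{prop:tube} using Lemma \ref{lem:curve_extension} for completeness, and a gluing-plus-Hausdorffness argument concluded by Cauchy-maximality of $M$ for maximality. The completeness half of your plan matches the paper's essentially step for step.

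In the maximality half, two of your stated justifications would not quite deliver as written. First, your route to the Hausdorffness contradiction is off target: Lemmas \ref{lem:separation_BTZ} and \ref{lem:descente_BTZ} concern BTZ-extensions, and $N$ is a Cauchy-extension, not a BTZ-extension, so a ``diamond-compactness argument in their spirit'' is not the available tool. The paper's mechanism is simpler and is what you actually need: $N$ contains no BTZ point at all, because any connected component of $\sing_0(N)$ would be an inextendible causal curve (Lemma \ref{lem:past_BTZ}) and hence would meet the Cauchy-surface $\psi(\Sigma')$, which lies in the BTZ-free set $\psi(M\setminus\sing_0)$; combined with Proposition \ref{prop:isom}, which forces non-separated points to have the same singularity type, this shows every non-separated pair is already identified. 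Second, the step ``applying Lemma \ref{lem:curve_extension}(i) to a Cauchy-surface of $N$ \ldots\ produces a Cauchy-surface of $\tilde N$, so $\tilde N$ is a Cauchy-extension of $M$'' is a non sequitur: exhibiting \emph{some} Cauchy-surface of $\tilde N$ only gives global hyperbolicity, whereas invoking Cauchy-maximality of $M$ requires that a Cauchy-surface \emph{of $M$} be a Cauchy-surface of $\tilde N$, and the surface you build from one of $N$ need not come from $M$. The paper instead runs the causal analysis of Proposition \ref{prop:complement_complete} directly on the original Cauchy-surface of $M$ (decomposing inextendible causal curves of $\tilde N$ into a BTZ part and a part inextendible in $N$) to show it remains a Cauchy-surface of $\tilde N$. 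Both repairs are short and stay within the toolkit you already cite, so the plan is sound once these two steps are replaced.
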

    
    A proof is divided into Propositions \ref{prop:complement_complete} and \ref{prop:cauchy_max}. 
    The method consists in cutting a given complete Cauchy-surface which intersects the singular lines 
    around each singular lines then     
    use Lemma \ref{lem:curve_extension} to replace the taken away discs by a surface that avoids the singular line.
    We then check that the new surface is a Cauchy-surface and prove that the new manifold is Cauchy-maximal.
    
    We assume $M$ Cauchy-complete and Cauchy-maximal, write $\Sigma$ a piecewise smooth spacelike and complete  Cauchy-surface of $M$.
   
      \begin{prop}[Cauchy-completeness]
      \label{prop:complement_complete}
	$M\setminus \sing_0(M)$ is Cauchy-complete.
      \end{prop}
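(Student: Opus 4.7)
The plan is to modify the given complete Cauchy-surface $\Sigma$ of $M$ by local surgery near each BTZ line, pushing the surface off to $\tau = +\infty$ along each axis, so as to obtain a complete spacelike Cauchy-surface of $M\setminus \sing_0(M)$. The Cauchy-maximality hypothesis enters via Proposition \ref{prop:tube}, which supplies a future-infinite standard tube neighborhood of each BTZ ray: this is essential since the surgically deformed surface --- which diverges as one approaches the singular axis --- must fit inside such a tube.

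Concretely, I would enumerate the connected components $(\Delta_i)_{i \in \Lambda_0}$ of $\sing_0(M)$. By Lemma \ref{lem:past_BTZ} each $\Delta_i$ meets $\Sigma$ exactly once, at a point $p_i$; combining Proposition \ref{prop:tube} with Lemma \ref{lem:local_param} gives around each $p_i$ a tube $\U_i \simeq \{\tau \ge \tau_i^-,\, r \le R_i\} \subset \BTZ$ on which $\Sigma$ is the graph of a piecewise smooth function $f_i: \mathbb D_{R_i} \to \R$. Shrinking the $R_i$, the tubes may be taken pairwise disjoint and locally finite. For each $i$, apply Lemma \ref{lem:curve_extension}(i) to the boundary datum $f_i|_{\partial \mathbb D_{R_i}}$ (smoothing it first if needed); the resulting graph $\Sigma_i'$ is acausal, spacelike, diverges to $+\infty$ at the axis, and by the explicit formula satisfies $\delta \ge C_i^2/r^2$, so Lemma \ref{lem:surface_complete}(1) yields both the metric completeness of $\Sigma_i'$ and the equality $D(\Sigma_i') = \U_i \setminus \Delta_i$. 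Choose the constant $M_i$ in the surgery formula large enough that $\Sigma_i' \cap \{r < R_i\}$ sits in the chronological future of $\Sigma \cap \U_i$, and set
\[
\Sigma' \;:=\; \Bigl(\Sigma \setminus \bigcup_i \Sigma \cap \{r < R_i\}\Bigr) \;\cup\; \bigcup_i \Sigma_i' \;\subset\; M \setminus \sing_0(M).
\]

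The surface $\Sigma'$ is piecewise smooth, spacelike, and matches $\Sigma$ continuously along every $\partial \U_i$. To verify that it is a Cauchy-surface of $M \setminus \sing_0(M)$, take an inextendible causal curve $c$ in $M \setminus \sing_0(M)$: its maximal extension $\tilde c$ in $M$ touches $\sing_0(M)$ at most along a past BTZ-arc (Lemma \ref{lem:past_BTZ}), and $\tilde c$ meets $\Sigma$ at a unique point $x$. If $x$ lies outside every tube then $x \in \Sigma'$; if $x \in \U_i$ then $c$ enters $\U_i \setminus \Delta_i = D(\Sigma_i')$ and hence crosses $\Sigma_i' \subset \Sigma'$, uniquely by the acausality of $\Sigma'$. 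Metric completeness of $\Sigma'$ follows from completeness of each $\Sigma_i'$, closedness of the unmodified piece in the complete space $\Sigma$, and local finiteness of the tubes: any Cauchy sequence concentrates in finitely many of the glued components and converges there.

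The main obstacle is the global acausality of $\Sigma'$ --- one must rule out a causal curve leaving some $\Sigma_i'$, exiting the tube, traversing the outer part of $M$, and returning to cross $\Sigma$ elsewhere. This is handled by arranging $\Sigma_i' \cap \{r < R_i\}$ to lie strictly in the chronological future of $\Sigma \cap \U_i$ (matching only on $\partial \U_i$), so that $\Sigma'$ is realised as the boundary of the past set $I^-\bigl(\Sigma \cup \bigcup_i \Sigma_i'\bigr)$ in $M \setminus \sing_0(M)$; the remaining causal verifications then follow.
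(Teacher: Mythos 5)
Your proposal follows essentially the same route as the paper: the same surgery via Lemma \ref{lem:curve_extension} inside the standard future half-tubes supplied by Proposition \ref{prop:tube}, the same appeal to Lemma \ref{lem:surface_complete} for completeness and for the Cauchy development of the replaced piece, and the same strategy of extending a causal curve of $M\setminus\sing_0(M)$ to an inextendible curve of $M$ and tracking its unique intersection with $\Sigma$. The only real divergence is in the final causal check: where you invoke a past-set-boundary argument and the step ``$c$ enters $D(\Sigma_i')$ hence crosses $\Sigma_i'$'' (too quick as stated, since the restriction of $c$ to a tube need not be inextendible there), the paper instead runs a component-by-component analysis of $\overline c\cap\T$, extending each component by a vertical ray on $\{r=R\}$ to make it inextendible in the tube before comparing its intersections with $\Sigma$ and $\Sigma_2$.
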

      \begin{proof} 
	Let $M'=M\setminus \sing_0(M)$. Let $\Delta$  be a BTZ-like singular line.
	We construct a complete Cauchy-surface $\Sigma_2$ of the complement of a $\Delta$. 
	The set of singular line being discrete, this construction extend easily to any number of singular line simultaneously.
		
	From Proposition \ref{prop:tube} there exists a neighborhood of $\Sigma \cap \Delta$ isometric to
	a  future half tube $\mathcal T= \{\tau>0, r\leq R\}$ of radius $R \in \R_+^*$  
	such that $\Sigma \cap \partial \mathcal T$ is an embedded circle. 
	Let $ \mathcal T \cap \Sigma =\mathrm{ Graph}(\tau_\Sigma)$ with $\tau_\Sigma : [0,R]\times \R/2\pi\Z \rightarrow \R_+^*$. 
	From Lemma \ref{lem:curve_extension}, there exists $\tau_{\Sigma_2}: \mathbb D_R^*\rightarrow \R_+^*$ such that 
	$\tau_{\Sigma_3}=\tau_\Sigma$ on $\partial \mathbb D_R$ and $\mathrm{Graph}(\tau_{\Sigma_2})$ is acausal, spacelike and complete and futhermore, the Cauchy development 
	$D(\mathrm{Graph}(\tau_{\Sigma_2}))=Reg(\mathcal T)$. Let $\Sigma_2$ the surface obtained gluing $\Sigma\setminus \T$ and $\mathrm{Graph}(\tau_{\Sigma_2})$
	along $\Sigma \cap \partial \T $.  Since $\Sigma$  and $\mathrm{Graph}(\tau_{\Sigma_2})$ are spacelike and complete then so 
	is $\Sigma_2$.
	
      We now show $\Sigma_2$ is a Cauchy-surface of $M\setminus \sing_0(M)$Let $c$  be an inextendible causal curve in $M'$, if $\inf(c)\in \sing_0(M)$ then one can extends it by adding the singular ray in its past to obtain 
      an inextendible causal curve  $\overline c$ in $M$. The curve $\overline c$ intersects $\Sigma$ exactly once at some point $p\in \Sigma$.
      \begin{itemize}
       \item  Assume $p \notin \mathcal T$, then  $p \in \Sigma \setminus \T = \Sigma_2 \setminus \T$ and $c$ intersects $\Sigma_2$. Consider $c_1$ a connected component 
       of $\overline c\cap T$. Notice $D(\Sigma\cap \T)=\T$, thus $c_1$ is not inextendible in $\T$ and thus $c_1$ leaves $\T$ at some parameter $s_1$.
	Then $c_1$ can be extended to 
	$$c_2=c_1\cup \{\tau> \tau_0, r=R,\theta=\theta_0\}$$
	for $c_1(s_1)=(\tau_0,R,\theta_0)$,
	which is inextendible in $\T$. The curve $c_2$ thus intersects $\Sigma$, but since $c_1\cap \Sigma=\emptyset$ then 
	$c_2$ intersects $\Sigma$ on the ray we added, and thus $\tau_\Sigma(R,\theta_0)> \tau_0$. The regular part of $c_2$
	is inextendible in $Reg(\T)$ and thus intersects $\Sigma_2$ exactly once and since $\Sigma$ and $\Sigma_2$ agree on 
	$\partial \T$ then $Reg(c_2)$ intersects $\Sigma_2$ on the added ray, and thus $c_1\cap \Sigma_2=\emptyset$.
	Finally, $c$ intersects $\Sigma_2$ exactly once.
      \item Assume $p\in \T$, then consider $c_1$  the connected component of $p$ in $\overline c \cap \T$. Either $c_1$ is inextendible
      or it leaves $\T$ and can be extended by adding some ray $\{\tau>\tau_0, r=R, \theta=\theta_0\}$. Either way, write 
      $c_2$ the inextendible extension of $c_1$ in $\T$. The regular part $Reg(c_2)$ is inextendible in $reg(\T)$ and thus 
      intersects $\Sigma_2 \cap \T$ exactly once. It cannot intersect $\Sigma_2$ on an eventually added ray $\{\tau>\tau_0, r=R, \theta=\theta_0\}$
      other wise $c_2$ would intersect $\Sigma$ twice. Then $Reg(c_2) \cap \Sigma_2 \in Reg(c_1)\subset c\cap \T$ and thus $c$ intersects  $\Sigma_2$.
      The curve $c$ cannot intersect $\Sigma_2$ outside $\T$ thus every point of $c\cap \Sigma_2$ are in $\T$. 
      Let $c_3$ another connected component of $\overline c\cap \T$.  It cannot be inextendible otherwise it would intersect $\Sigma$, thus 
      it leaves $\Sigma$ and can be extended by adding some ray $\{\tau> \tau_0,r=R,\theta=\theta_0\}$, we obtain an inextendible curve $c_4$. 
      This curve intersects $\Sigma$ and $\Sigma_2$ exactly once. Since  $p\in c_1$ and $\overline c \cap \Sigma =\{p\}$, we have $c_3\cap \Sigma=\emptyset$ and 
      $c_4\cap \Sigma \in \{\tau> \tau_0,r=R,\theta=\theta_0\}$. Therefore, $c_4\cap \Sigma=c_4\cap \Sigma_2\neq \emptyset$ and, again, 
      $c_3$ does not intersect $\Sigma_2$. 
      
      Finally, $c$ intersect $\Sigma_2$ exactly once.
      
      \end{itemize}
      $\Sigma_2$ is thus a Cauchy-surface of $M\setminus \sing_0(M)$.

      \end{proof}

      \begin{prop}[Cauchy-maximality] \label{prop:cauchy_max}
      $M\setminus \sing_0$ is Cauchy-maximal.
      \end{prop}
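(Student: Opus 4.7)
The plan is to argue by contradiction: supposing a proper Cauchy-extension $\phi\colon M' \hookrightarrow N'$ of $M' := M \setminus \sing_0(M)$ exists, I will construct a proper Cauchy-extension of $M$, contradicting Cauchy-maximality of $M$.

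First I would show that $\sing_0(N') = \emptyset$. The image $\phi(\Sigma_2) \subset N'$ of the Cauchy-surface $\Sigma_2$ from Proposition \ref{prop:complement_complete} is itself a Cauchy-surface of $N'$, so any BTZ line of $N'$ must intersect it at some $\phi(q)$ with $q \in \Sigma_2 \subset M'$; but then $\phi$ realizes locally at $q$ an $\E^{1,2}$-morphism from a neighborhood of regular or massive type to a neighborhood of BTZ type, which Proposition \ref{prop:isom} forbids. Next, for each connected component $\Delta$ of $\sing_0(M)$, I would fix a tubular neighborhood $\mathcal T_\Delta$ in $M$ whose regular part embeds via $\phi$ into $N'$, and form the amalgamated space $N := (M \sqcup N')/{\sim}$, identifying each $p \in M' \subset M$ with $\phi(p) \in N'$. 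The atlases of $M$ and $N'$ together yield a candidate $\E^{1,2}_A$-atlas on $N$.

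The main obstacle will be showing that $N$ is Hausdorff. A non-separated pair must have the form $(x, y)$ with $x \in \sing_0(M)$ and $y \in N' \setminus \phi(M')$, witnessed by a sequence $z_n \in M'$ with $z_n \to x$ in $M$ and $\phi(z_n) \to y$ in $N'$; by the previous step, $y$ is of regular or massive type. The key ingredient is the rigidity of morphisms near a BTZ line: using the explicit form of the developing map of $Reg(\BTZ)$ from Proposition \ref{prop:BTZ_fund}, the image of any sequence of regular-tube points approaching a fixed BTZ point develops to a unique point of the ambient lightlike line, independent of the angular approach and of the sheet of the universal cover (since the parabolic holonomy fixes that line pointwise). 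Composing with a chart $\psi\colon V \to \E^{1,2}_\beta$ at $y$, this uniqueness forces $\phi$ to extend continuously to a tube neighborhood of $x$ with $\phi(x) = y$, and Proposition \ref{prop:isom} then applies with $\alpha = 0$ and $\beta$ equal to the type of $y$, forcing $\beta = 0$ and contradicting the previous step.

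To conclude, I would verify that $N$ is globally hyperbolic with the original Cauchy-surface $\Sigma$ of $M$ still serving as a Cauchy-surface of $N$. Any inextendible causal curve $c$ of $N$ decomposes into pieces within the tubes $\mathcal T_\Delta$, where $\Sigma \cap \mathcal T_\Delta$ is a Cauchy-surface of the tube, and pieces outside, which are inextendible in the open set $N' \setminus \bigcup_\Delta \phi(\mathcal T_\Delta \setminus \Delta)$ and meet $\phi(\Sigma_2) \subset N$ via the Cauchy-property of $N'$; matching the two kinds of segments at the tube boundaries as in Step 3 of Proposition \ref{prop:complement_complete} shows that $c$ meets $\Sigma$ exactly once. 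The inclusion $M \hookrightarrow N$ is then a Cauchy-embedding, and Cauchy-maximality of $M$ forces $N = M$; in particular $N' \setminus \phi(M') \subset N \setminus M = \emptyset$, so $N' = \phi(M') \simeq M'$, as required.
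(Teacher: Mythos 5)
Your proposal is correct and follows essentially the same route as the paper: amalgamate $M$ with the putative extension $N'$ along $M\setminus\sing_0(M)$, prove the quotient is Hausdorff by showing that a non-separated pair would violate the rigidity Proposition \ref{prop:isom}, check that the result is a Cauchy-extension of $M$, and invoke Cauchy-maximality of $M$. Two remarks. First, you organize the Hausdorffness step contrapositively (reduce to a pair consisting of a BTZ point of $M$ and a non-BTZ point of $N'$, then derive a contradiction), whereas the paper shows directly that any non-separated pair $(p,q)$ must have the same type and is therefore an already-identified pair; this is only a cosmetic difference, and your explicit argument that $\sing_0(N')=\emptyset$ usefully fills in a claim the paper asserts in one line. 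Second, the one step you under-justify is the continuous extension of $\phi$ at $x$: before you can ``compose with a chart $\psi\colon\V\to\E^{1,2}_\beta$ at $y$'' you must know that $\phi$ sends an entire punctured neighborhood of $x$ into $\V$, and this does not follow from the single witnessing sequence $(z_n)$ nor from the developing map of the source alone (one must also rule out that different angular sectors of the tube land in different charts or different sheets around $y$). The paper supplies exactly this via the identity $j\circ i^{-1}(I^+(p))=I^+(q)\cap j(M_0)$, obtained by writing $I^+(p)$ as $Int\bigl(\bigcap_{N}\bigcup_{n\geq N} I^+(a_n)\bigr)$, and then by matching diamond neighborhoods $\Diamond_p^{i(a^+)}$ and $\Diamond_q^{j(a^+)}$ inside single charts; you would need this (or an equivalent preliminary reduction) before your developing-map computation can be run. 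With that patch your argument goes through.
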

      \begin{proof}
	  Write $M_0=M\setminus \sing_0$, take $M_1$ a Cauchy-extension of $M_0$ and write $i:M_0\rightarrow M $ the natural inclusion 
	  and $j:M_0\rightarrow M_1$ the Cauchy-embedding.  Consider 
	  $M_2=(M\coprod M_1)/M_0$. Note $\pi:M\coprod M_1\rightarrow M_2$ the natural projection, $\pi$ is open.
	  Assume $M_2$ is not Hausdorf. Let $(p,q)\in M\times M_1$  such that for all $\U$ neighborhood of $p$
	  and $\V$ neighborhood of $q$, $\pi(\U)\cap\pi(\V)\neq \emptyset$. Take a sequence 
	  $(a_n)_{n\in \N} \in M_0^\N$   such that $\lim i(a_n)=p$ and $\lim j(a_n)=q$. Since
	  $j\circ i^{-1}:M_0\rightarrow j(M_0)$ is a $\E^{1,2}_A$-isomorphism, 
	  \begin{eqnarray}
	  j\circ i^{-1}\left(I^+(p)\right)&=&j\circ i^{-1}\left[Int\left( \bigcap_{N\in \N} \bigcup_{n\geq N} I^+(i(a_n))\right) \right]\\
	  &=&Int\left(\bigcap_{N\in \N} \bigcup_{n\geq N} I^+\left(j(a_n)\right)\right)\\
	  &=&I^+(q)\cap j(M_0)\\
	  \end{eqnarray}
	  Consider a chart neighborhood $\U$ of $p$ and a chart neighborhood $\V$  of $q$ and assume $\U=\Diamond_{p^-}^{p^+}$
	  and $\V=\Diamond_{q^-}^{q^+}$. The image $j\circ i^{-1}(p^+)$ is in $I^+(q)$ thus 
	  $\Diamond^{j\circ i^{-1}(p^+)}_{q^-}$ is a  neighborhood of $q$ and so is  $\Diamond^{j\circ i^{-1}(p^+)}_{q^-} \cap \Diamond_{q^-}^{q^+}$. 
	  Then $I^+(q)\cap j(M_0) \cap \Diamond^{j\circ i^{-1}(p^+)}_{q^-} \cap \Diamond_{q^-}^{q^+}\neq \emptyset$ and we 
	  take some  $a^+\in M_0$ such that $i(a^+)\in I^+(p)\cap \U$ and $j(a^+) \in I^+(q)\cap \V$, 
	  so $$\U\supset j\circ i^{-1}\left(\Diamond_p^{i(a^+)}\right) =\Diamond_{q}^{j(q^+)}\subset \V.$$ 
	  Then, from Proposition \ref{prop:isom}, $\U$ and $\V$ are in the same model space and $p$ and $q$ are of the same type. 
	  However, since $\sing_0(M_0)=\emptyset$ we also have $\sing_0(M_1)=\emptyset$, thus $q$ in not a BTZ point and
	  $p\in M_0$.  Finally, $j\circ i^{-1}(p)=q$ and $\pi(p)=\pi(q)$. Therefore, $M_2$ is Hausdorff.
	     
	  A causal analysis as in the proof of Proposition \ref{prop:complement_complete} shows $M_2$ is a Cauchy-extension of 
	  $M$. Since $M$ is Cauchy-maximal, $M_2=M$ and $M_1=M_0$. Finally, $M_0$ is Cauchy-maximal
      \end{proof}

            \begin{proof}[Proof of Proposition \ref{prop:Theo_part_1}]
	  Propositions \ref{prop:cauchy_max} and \ref{prop:complement_complete} give respectively Cauchy-completeness and 
	  Cauchy-maximality of $M\setminus \sing_0(M)$.
      \end{proof}

  \subsection{Cauchy-completeness of BTZ-extensions}\label{sec:BTZ-fication}
    We  prove that every BTZ-extension of a Cauchy-complete globally hyperbolic spacetime 
    is Cauchy-maximal and Cauchy-complete.
    Let $M_0$ be a Cauchy-complete Cauchy-maximal globally hyperbolic $\E^{1,2}_A$-manifold.
     We denote by $M_1$ the maximal BTZ-extension of $M_0$ and by $M_2$ the maximal Cauchy-extension of $M_1$. 
     We assume $M_0\subset M_1\subset M_2$ and take  $\Sigma_0$ a Cauchy-surface of $M_0$.
    The first step is to ensure ensure that $\Sigma_0$  can be parametrised as a graph around a BTZ-line of $M_1$.
    
      \begin{lem}\label{lem:add_BTZ_param}
	Let $\Delta$ be a connected component of $\sing_0(M_2)$. 
	For $R>0$, write $\mathbb D_R = \{\tau=0,r\leq R\}$ in $\BTZ$. 
	
	For all  $p\in \Delta \cap (M_1 \setminus M_0)$, there exists $\U$ a neighborhood of $]p,+\infty[$ such that
	\begin{itemize}
	 \item we have an isomophism $\D: \U \rightarrow \mathcal T \subset \BTZ$ with  $\T=\{\tau\geq 0, r\leq R \}$ for some $R>0$;
	 \item we have a smooth function $\tau_{\Sigma_0} : \mathbb D_R^*\rightarrow \R_+ $ such that 
	 $$\D(\Sigma_0\cap \U)= \mathrm{Graph}(\tau_{\Sigma_0})$$
	 and  $\{\tau\leq \tau_{\Sigma_0}\}\subset M_0$.
	\end{itemize}
     \end{lem}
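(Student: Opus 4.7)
I plan to construct the standard BTZ half-tube around $[p,+\infty[$ in the Cauchy-maximal $M_2$, realise $\Sigma_0$ as a graph over a punctured disc by tracking the timelike vertical fibres of the tube, and shrink the radius using the completeness of $\Sigma_0$ to enforce positivity of the graph function and the past-in-$M_0$ condition.

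First, since $M_2$ is Cauchy-maximal, Proposition \ref{prop:tube} applied at $p$ supplies a neighbourhood $\U_0\subset M_2$ of $[p,+\infty[$ and an isomorphism $\D:\U_0\to\T_0=\{\tau\geq 0,r\leq R_0\}\subset\BTZ$ with $\D(p)=(0,0)$; set $\Delta=\D^{-1}(\{r=0\})$. By Lemma \ref{lem:descente_BTZ} applied to $M_1\hookrightarrow M_2$ one has $\Delta\subset\sing_0(M_1)$, and each point of $\Delta$ admits a local BTZ-tube chart in $M_1$ whose regular locus lies in $M_0$ (since $M_0$ contains no BTZ point, $M_1\setminus M_0=\sing_0(M_1)$). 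Hence a neighbourhood of $\Delta$ inside $\U_0\setminus\Delta$ is contained in $M_0$.

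For each $(r,\theta)\in\mathbb{D}_{R_0}^*$, the vertical line $\ell_{r,\theta}(s)=\D^{-1}(s,r,\theta)$, $s\in[0,+\infty[$, is a future-directed timelike curve in $\U_0\setminus\Delta$; on the portion inside $M_0$ it is future-inextendible there (escaping every compact subset as $s\to+\infty$). Its maximal past extension in $M_0$ is an inextendible causal curve and, by the Cauchy property of $\Sigma_0$, meets $\Sigma_0$ at a unique parameter $s^*(r,\theta)\in\R$; acausality of $\Sigma_0$ ensures uniqueness on $s\geq 0$, and transversality of the spacelike $\Sigma_0$ with the timelike fibres makes $s^*$ smooth on its natural domain.

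The main obstacle is the positivity $s^*(r,\theta)>0$ on a punctured disc around $(0,0)$. On the open subset $\{s^*>0\}\subset\mathbb{D}_{R_0}^*$ the map $(r,\theta)\mapsto\D^{-1}(s^*(r,\theta),r,\theta)$ realises a piece of $\Sigma_0\cap\U_0$ as a graph; this piece is metrically complete as a closed subset of the complete $\Sigma_0$, so Lemma \ref{lem:surface_complete}(2) forces $s^*(r,\theta)\to+\infty$ along any sequence in that open set tending to $(0,0)$. The reverse assertion---that no sequence $(r_n,\theta_n)\to 0$ satisfies $s^*(r_n,\theta_n)\leq 0$---is ruled out by extracting a convergent subsequence of the intersection points in $\Sigma_0$ via compactness of closed diamonds in the globally hyperbolic $M_0$, whose limit would accumulate on $\Delta$, contradicting $\Sigma_0\subset M_0$ disjoint from $\Delta$. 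Shrinking $R_0$ to $R$ so that $\mathbb{D}_R^*\subset\{s^*>0\}$, and setting $\U=\D^{-1}(\{\tau\geq 0,r\leq R\})$ and $\tau_{\Sigma_0}=s^*|_{\mathbb{D}_R^*}$, the graph description $\D(\Sigma_0\cap\U)=\mathrm{Graph}(\tau_{\Sigma_0})$ follows, and the inclusion $\{\tau\leq\tau_{\Sigma_0}\}\subset M_0$ holds since each point of that region is joined to $\Sigma_0$ by a past-directed timelike segment along a vertical fibre, hence lies in $D^-_{M_0}(\Sigma_0)\subset M_0$ after a possible further shrinking of $R$.
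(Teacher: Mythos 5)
Your overall strategy (project $\Sigma_0$ along the fibres of a standard half-tube and fight for positivity of the resulting graph function afterwards) is genuinely different from the paper's, but as written it has two load-bearing gaps. First, you invoke Lemma \ref{lem:descente_BTZ} for the inclusion $M_1\hookrightarrow M_2$ to get $\Delta\subset\sing_0(M_1)$ and hence that a whole neighbourhood of $\Delta$ minus $\Delta$ lies in $M_0$; but that lemma is stated for \emph{BTZ}-extensions, whereas $M_2$ is a \emph{Cauchy}-extension of $M_1$, so it does not apply. Worse, the assertion that the future ray $]p,+\infty[$ together with a regular neighbourhood contained in $M_0$ lies in $M_1$ is essentially what Step~1 of Proposition \ref{prop:Theo_part_2_a} proves \emph{using} the present lemma, so assuming it here is close to circular. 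All you actually know at this stage is that $p\in\sing_0(M_1)$ admits one BTZ chart in $M_1$ whose regular locus lies in $M_0$; far up the tube the regular points may a priori belong to $M_2\setminus M_0$. Second, your proof of $\{\tau\leq\tau_{\Sigma_0}\}\subset M_0$ does not work: exhibiting one future causal segment from a point $x$ of that region to $\Sigma_0$ only shows $x\in J^-(\Sigma_0)$, not $x\in D^-(\Sigma_0)$ (a future causal curve from $x$ can exit the tube through $r=R$ without ever meeting the graph), and writing $D^-_{M_0}(\Sigma_0)$ for a point not yet known to lie in $M_0$ is circular. This inclusion is precisely the delicate half of the lemma, since near the axis the region below the graph climbs to $\tau\to+\infty$.

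The paper's mechanism avoids both problems at once: it uses the broken lightlike curves $c_{r_0,\theta_0}$ that follow $\partial J^+(p)$ up to radius $r_0$ and then rise vertically. Each such curve begins arbitrarily close to $p$, hence in $M_0$; the first connected component of its intersection with $M_0$ is an inextendible causal curve of $M_0$ and therefore meets $\Sigma_0$, so the whole portion of the curve \emph{before} that intersection lies in $M_0$ by connectedness --- this is what gives $\{\tau\leq\tau_{\Sigma_0}\}\subset M_0$. Positivity comes from the observation that $\Sigma_0\cap\partial J^+(p)$ has $r$-coordinate bounded below by some $R'>0$, so for $r_0<R'$ the intersection must occur on the vertical part, forcing $\tau_{\Sigma_0}>r_0/2>0$. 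By contrast, your positivity argument rests on applying Lemma \ref{lem:surface_complete}(2) to the graph over the open set $\{s^*>0\}$, which is neither closed in $\Sigma_0$ nor a graph over a full punctured disc, so that lemma does not apply as stated; your compactness argument ruling out sequences with $s^*\leq 0$ is the right idea, but it needs the intersection points trapped in a compact set of the form $J^-_{M_0}(q')\cap\Sigma_0$ and the facts that the limit lies in $J^-_{M_2}(p)\subset\sing_0(M_2)$ (Lemma \ref{lem:past_BTZ}) while $\Sigma_0\subset M_0$ is disjoint from $\sing_0(M_2)$, none of which you supply. (Minor: the vertical fibres $r,\theta=\mathrm{const}$ are lightlike for $-2\d\tau\d r+\d r^2+r^2\d\theta^2$, not timelike; this does not affect the causal arguments but should be corrected.)
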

    \begin{proof}
       
      From  Proposition \ref{prop:tube} the BTZ-line are complete in the future in $M_2$ and there are charts around future 
      half of BTZ-lines in $M_2$ which are half tube of some constant radius. 
      Consider such a tubular chart of radius $R$ around a BTZ half-line $\Delta$ of $M_2$ 
      which contains a point in $M_1\setminus M_0$ and take a point $p\in \Delta\cap (M_1\setminus M_0)$. 
      We assume $p$ has coordinate $\tau=0$ and that $\U=\{-\tau^*<\tau <\tau^*, r\leq R \} \subset M_1$ for some 
      $\tau^*>0$ and that $\V=\{-\tau^*<\tau, r\leq R \} \subset M_2$. 
      Consider future causal once broken geodesics defined on $\R_+^*$ of the form 
      $$c_{\theta_0}(s)=\left\{\begin{array}{ll}\displaystyle (s/2,s,\theta_0) & \text{if}~~ s\leq R\\ \displaystyle (s/2,R,\theta_0) & \text{if}~~s> R  \end{array} \right.$$
      where $\theta_0 \in \R/2\pi\Z$. These curves parametrise the boundary of $J^+(p)\cap \V$. 
      These curves are in the regular part of  $M_2$ and  start in $M_0$. Each connected component of the intersection of these curve with $M_0$  is 
      an inextendible causal curve. Take the first connected component, it intersects $\Sigma_0$ exactly once.
      Let $B$ be the connected component of $p$ in the boundary of $J^+(p)\cap \V \cap M_1$. 
      Let $b:\R/2\pi\Z\rightarrow \Sigma_0\cap B$ be the function $b:\theta \mapsto (\tau(\theta),r(\theta))$ which parametrizes $\Sigma\cap B$. 
      We have $B$ and $\Sigma_0$ are transverse since $B$ is foliated by causal curves and $\Sigma_0$ is spacelike,
      thus $B\cap \Sigma_0$ is a topological 1-submanifold and $b$ is continuous and bijective.
      Then $b$ is a homeomorphism and the $r$ coordinate on $B\cap \Sigma_0$ reaches a minimum $R'>0$. 
      In the tube $\{r\leq R', \tau>-\tau^*\}$, consider the  future causal curves defined on $\R_+^*$,
      $$c_{r_0,\theta_0}(s)=\left\{\begin{array}{ll}\displaystyle (s/2,s,\theta_0) & \text{if}~~ s\leq r_0\\ \displaystyle (s/2,r_0,\theta_0) & \text{if}~~s> r_0  \end{array} \right.$$
      for $r_0 \in ]0,R'[$ and $\theta \in \R/2\pi\Z$. The intersection point with $\Sigma_0$ cannot be on the piece $s\in ]0,r_0]$
      while this piece is on a causal curve $c_{\theta_0}$ and that $r_0<R'$. Thus, $\Sigma_0$ intersects all such curves on the
      piece $s>r_0$ and the projection $\pi:\V\rightarrow \mathbb D^*$ restricted to $\Sigma\cap \V$ is continous and bijective.
      we obtain a parametrisation of $\Sigma_0$ as the graph of some function 
      $\tau_\Sigma:(r,\theta)\mapsto \tau_{\Sigma_0}$ in the tubular chart of radius $R'$. 
      $$\Sigma_0 = \mathrm{Graph}(\tau_{\Sigma_0})\quad \tau_{\Sigma_0}(r,\theta)>\frac{1}{2}r>0 $$
      Since $\pi$ is a projection along lightlike lines and $\Sigma_0$ is spacelike, $\tau_{\Sigma_0}$ is smooth.
      Furthermore, by definition of the curves $c_{r_0,\theta_0}$ the portion of curve before intersecting $\Sigma_0$ is in $M_0$ and thus 
      we get a domain 
      $$\left\{r\in ]0,R'], \theta\in \R/2\pi\Z,  \tau \in \left]-\tau^*, \tau_{\Sigma_0}(r,\theta)\right]\right\}$$ included into $M_0$. 
     
    \end{proof}

     \begin{prop}\label{prop:Theo_part_2_a}$M_1$ is Cauchy-complete and  Cauchy-maximal.
     \end{prop}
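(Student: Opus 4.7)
The plan is to construct a complete spacelike piecewise smooth Cauchy-surface $\Sigma_1$ of $M_1$ from $\Sigma_0$ by a localised BTZ-surgery around each new singular line. Since $M_0$ has no BTZ point, every connected component $\Delta$ of $\sing_0(M_1)$ lies in $M_1\setminus M_0$. For each such $\Delta$, pick a basepoint $p_\Delta\in\Delta$ and invoke Lemma~\ref{lem:add_BTZ_param} to obtain a tube neighbourhood $\U_\Delta\simeq\{\tau\geq 0,\ r\leq R_\Delta\}$ of $[p_\Delta,+\infty[$ in which $\Sigma_0\cap\U_\Delta$ is the graph of a smooth function $\tau_{\Sigma_0}^\Delta:\mathbb D_{R_\Delta}^*\to\R_+^*$, with $\{\tau\leq\tau_{\Sigma_0}^\Delta\}\subset M_0$. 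Since $\sing_0(M_1)$ is a countable disjoint union of 1-submanifolds, the $R_\Delta$ can be shrunk so the $\U_\Delta$ are pairwise disjoint. Applying Lemma~\ref{lem:curve_extension}~(i) to the boundary data $\tau_{\Sigma_0}^\Delta|_{\partial\mathbb D_{R_\Delta}}$ produces $\hat\tau_\Sigma^\Delta:\mathbb D_{R_\Delta}\to\R_+^*$ whose graph in $\U_\Delta$ is spacelike, acausal, complete, and meets the singular line at $(\hat\tau_\Sigma^\Delta(0),0)$. Define
\[\Sigma_1\ :=\ \Bigl(\Sigma_0\setminus\bigcup_\Delta\U_\Delta\Bigr)\ \cup\ \bigcup_\Delta\mathrm{Graph}(\hat\tau_\Sigma^\Delta).\]
Piecewise smoothness, spacelikeness and completeness follow because the pieces agree on the compact gluing circles $\partial\mathbb D_{R_\Delta}$ and each piece is individually complete. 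That $\Sigma_1$ is Cauchy in $M_1$ is verified via the same kind of causal analysis as in Proposition~\ref{prop:complement_complete}: an inextendible causal curve $c$ of $M_1$ decomposes along its passages through the tubes; outside the tubes $c$ lies in $M_0$ and meets $\Sigma_0\setminus\bigcup\U_\Delta$ exactly once, while inside each $\U_\Delta$ the graph of $\hat\tau_\Sigma^\Delta$ has Cauchy-development the entire tube (Lemma~\ref{lem:surface_complete} handles the regular part, and the BTZ point on the graph catches the inextendible singular line).

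\textbf{Part II (Cauchy-maximality).} Let $\phi:M_1\hookrightarrow M_1'$ be any Cauchy-extension. The strategy is to show $M_1'$ is itself a BTZ-extension of $M_0$; the maximality of $M_1$ among BTZ-extensions of $M_0$ (Theorem~\ref{theo:BTZ_ext}) then forces $M_1'\subseteq M_1$, giving equality. Writing $M_1^*:=M_1'\setminus\sing_0(M_1')$, this amounts to showing $M_1^*=M_0$. The inclusion $M_0\subseteq M_1^*$ is automatic since $M_0$ has no BTZ points, and $M_1^*$ is globally hyperbolic by the remark preceding Proposition~\ref{prop:complement_complete}. By Cauchy-maximality of $M_0$, it then suffices to exhibit a Cauchy-surface of $M_0$ that is also Cauchy in $M_1^*$. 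For this I would start from $\Sigma_1$, which is Cauchy in $M_1'$ by definition of Cauchy-embedding, and apply the dual surgery of Lemma~\ref{lem:curve_extension}~(ii) at each BTZ point of $M_1'$ lying on $\Sigma_1$ to obtain a spacelike, complete, piecewise smooth surface $\Sigma_0'\subset M_1^*$ asymptotic to each such BTZ line. The causal analysis of Proposition~\ref{prop:complement_complete} adapts to show $\Sigma_0'$ is Cauchy in $M_1^*$; arranging the surgery tubes inside the sub-graph region of Lemma~\ref{lem:add_BTZ_param} places $\Sigma_0'\subset M_0$, and the Cauchy-maximality of $M_0$ then collapses $M_1^*$ onto $M_0$. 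Consequently $M_1'=M_0\cup\sing_0(M_1')$ is a BTZ-extension of $M_0$, and BTZ-maximality of $M_1$ closes the argument.

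\textbf{The main obstacle} sits in the second part: a priori, $M_1'$ might introduce new BTZ lines belonging to $\sing_0(M_1')\setminus\sing_0(M_1)$, whose tubular neighbourhoods are not known to lie in $M_0$, so the asymptotic surgery producing $\Sigma_0'$ cannot be performed inside $M_0$ without further control. The heart of the argument is therefore to rule out such new BTZ lines, or equivalently to prove that every inextendible causal curve of $M_1^*$ already meets $\Sigma_0\subset M_0$. This should follow from a careful study of the boundary of $M_1$ inside $M_1'$, combining Lemma~\ref{lem:descente_BTZ} (no BTZ point of $M_1$ can have a strictly greater BTZ point outside $M_1$), the BTZ-maximality of $M_1$ (no tubular neighbourhood of a hypothetical new BTZ line can fit in $M_1'$ without already being BTZ-extendable from $M_1$), and the Cauchy-completeness of $M_1$ just established (which forces past-inextendible causal curves in $M_1^*$ to re-enter $M_0$ and meet $\Sigma_0$). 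Once this control is in place, Cauchy-maximality of $M_0$ and BTZ-maximality of $M_1$ combine as above to yield $M_1'=M_1$.
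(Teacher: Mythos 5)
Your overall architecture matches the paper's: a surgery via Lemma \ref{lem:curve_extension} to produce a complete Cauchy-surface of $M_1$, then a reduction of Cauchy-maximality to the Cauchy-maximality of $M_0$ combined with the BTZ-maximality of $M_1$. However, Part I has a genuine gap that you do not flag. Lemma \ref{lem:add_BTZ_param} produces the half-tube $\U_\Delta\simeq\{\tau\geq 0, r\leq R\}$ as a neighbourhood in $M_2$ (the maximal Cauchy-extension of $M_1$), and the only containment it provides is $\{\tau\leq\tau_{\Sigma_0}\}\subset M_0$, i.e.\ the region \emph{below} the graph of $\Sigma_0$. Since $M_1$ is a BTZ-extension of $M_0$, we have $M_1\setminus M_0\subset\sing_0(M_1)$, so for $\mathrm{Graph}(\hat\tau_\Sigma^\Delta)$ to lie in $M_1$ its regular part must lie in $M_0$; but that graph crosses the singular line at finite height and is not contained in the sub-graph region $\{\tau\leq\tau_{\Sigma_0}\}$, so nothing you have said places it inside $M_1$, and your claim that its Cauchy development is ``the entire tube'' is computed in a model tube not yet known to sit inside $M_1$. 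The paper devotes a whole first step to exactly this point: it forms the auxiliary spacetime $M=\T'\cup\bigl(M_0\setminus J^+(\T'\setminus M_0)\bigr)$ with $\T'=Reg(\T)$, verifies that $\Sigma_0$ is a Cauchy-surface of $M$, and invokes uniqueness of the maximal Cauchy-extension of $M_0$ to conclude $\T'\subset M_0$, hence $\T\subset M_1$. Only after that does the surgery of your Part I make sense; this is the main new content of the proposition beyond the quoted lemmas, and it is missing from your proposal.

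Concerning Part II, your reduction is the right one, but the ``main obstacle'' you describe (new BTZ lines of $M_1'$ and the placement of a re-surgered surface inside $M_0$) dissolves if you use $\Sigma_0$ itself instead of performing a second surgery: $\Sigma_0\subset M_0\subset M_2\setminus\sing_0(M_2)$ holds trivially, so one only needs to check that every inextendible causal curve of $M_2\setminus\sing_0(M_2)$ meets $\Sigma_0$ exactly once. This follows from the decomposition of causal curves of Lemma \ref{lem:past_BTZ} together with the observation that inside each tube $\T_\Delta$ an inextendible causal curve meets $\Sigma_0\cap\T_\Delta$ if and only if it meets $\mathrm{Graph}(\tau_\Delta)$. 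Then $M_2\setminus\sing_0(M_2)=M_0$ by Cauchy-maximality of $M_0$, so $M_2$ is a BTZ-extension of $M_0$ and BTZ-maximality of $M_1$ gives $M_2=M_1$. As written, your Part II leaves its central difficulty as a ``should follow from'', so it is not yet a complete argument either.
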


     \begin{proof} The proof is divided into 3 steps. First we show that BTZ line in $M_1$ are complete 
     in the future and future half of BTZ-lines of $M_1$
     are contained in a tube neighborhood of some radius. Second we modify the smooth 
     spacelike and complete  Cauchy-surface 
     $\Sigma_0$ of $M_0$ to obtain smooth spacelike and complete Cauchy-surface of $M_1$. Third, we show that $(M_2\setminus \sing_0(M_2)$
     is  a Cauchy-extension of $M_0$ and conclude. 
	\begin{enumerate}
	 \item[\underline{Step 1}] Consider a BTZ line $\Delta$ in $M_2$. Consider $\T$  a closed half-tube neighborhood
	 of radius $R>0$
	 around $]p,+\infty[$ for some $p\in \Delta \cap (M_1\setminus M_0) $ given by  Lemma \ref{lem:add_BTZ_param}.
	 Write $\tau_{\Sigma_0}$ the parametrisation of $\Sigma_0$ by $\mathbb D_R$ and  $\T'=Reg(\T)=\T\setminus \Delta$.
	 Consider the complement of $M_0$ in the half-tube $\T'$, substract its future to $M_0$ and then add the full half-tube, namely:
	 
	 $$M=\T'\cup \left(M_0\setminus \left(J^+(\T'\setminus M_0) \right)\right).$$
	 
	 Since $\Sigma_0\cap \T = \mathrm{Graph}(\tau_{\Sigma_0})$ and $J^-_{\T'}(\Sigma_0)\subset M_0$, 
	 then $J^+(\T'\setminus M_0)\subset J^+(\Sigma_0)$ and thus $\Sigma_0\subset M$.
	 Let $c$ be an future inextendible causal curve in $M$. 
	 Remark that by construction of $M$, the curve $c$ cannot leave $\T'\setminus M_0$.
	 Therefore, since $c$  is connected, $c$ decomposes into two connected consecutive parts : a $M_0$ part and then a part in $\T'\setminus M_0$. 

	 \begin{itemize}
	  \item Assume $c \cap (T'\setminus M_0) \neq \emptyset$.  
	    Since $\Sigma_0$ is spacelike and complete by Lemma \ref{lem:surface_complete}
	    $\lim_{(r,\theta) \rightarrow 0}\tau_{\Sigma_0}(t,\theta)=+\infty$.  The intermediate value theorem 
	    then ensures that $c$ intersects $\Sigma_0\cap \T'$. Furthermore, once in $T'\setminus M_0$, the curve 
	    $c$ stays in $T'\setminus M_0$ thus $c\cap M_0$ is an inextendible causal curve of $M_0$ which intersects $\Sigma_0$.
	    exactly once. Then, $c$ intersects $\Sigma_0$ exactly once. 
	  \item Assume $c \cap (T' \setminus M_0) = \emptyset$. The curve $c$ is then a causal curve in $M_0$ and 
	  any inextendible extension of $c$ in $M_0$ intersects $\Sigma_0$ exactly once. Such an inextendible extension cannot leaves $J^+(\T'\setminus M_0)$ 
	  once it enters it, therefore its intersection point with $\Sigma_0$ is on $c$.
	 \end{itemize}
	 
	 Therefore $\Sigma_0$ is a Cauchy-surface of $M$,  $M$ is a Cauchy-extension of a neighborhood of $\Sigma_0$ in $M_0$
	 and by unicity of the maximal Cauchy-extension Theorem \ref{theo:cauchy_max_ext}, $M$ is a subset of $M_0$. 
	 Finally, $M_0$ contains $\T'$ and thus $M_1$ contains $\T$.
	 	 
	\item[\underline{Step 2}]  
	      Consider a BTZ line $\Delta$ in $M_2$ and a tube neighborhood $\T_\Delta$ of $\Delta$ given by Lemma \ref{lem:add_BTZ_param}. 
	      Let $\tau_{\Sigma_0}$ be the parametrisation of $\Sigma_0$ inside $\T_\Delta$. From step one, $\T_\Delta$ is in $M_1$ thus from Lemma
	      \ref{lem:curve_extension}, one can extend $\Sigma_0\cap \partial \T_\Delta$ to some smooth spacelike complete surface $\mathrm{Graph}(\tau_{\Delta})$ 
	      in $\T_\Delta$ parametrised by $\mathbb D_R$ for some $R$. 
	      The number of BTZ-line being enumerable, one can choose the neighborhoods $\T$ around each BTZ-line such that 
	      they don't intersect. Thus this procedure can be done around every BTZ-line simultaneously. 
	      A causal discussion as in Proposition \ref{prop:complement_complete} shows that the surface 
	      $$\Sigma_1:=(\Sigma_0\setminus \bigcup_{\Delta}\T_\Delta) \cup \bigcup_{\Delta}\mathrm{Graph}(\tau_{\Delta})$$
	      is a piecewise smooth spacelike and complete Cauchy-surface of $M_1$. Therefore, $M_1$  is \textbf{Cauchy-complete}.
	      
	\item[\underline{Step 3}]
	  Consider now  $M=\left(M_2\setminus \sing_0(M_2)\right)$ and $c$ a future inextendible causal curve $c$ in $M$. The curve $c$
	  can be extended to some $c'$ inextendible curve of $M_2$. From Lemma \ref{lem:curve_extension},
	  $c'$ decomposes into two connected consecutive part : $\Delta$ its BTZ part, then $c^0$ its non-BTZ part. By definition of $M$,
	  $\Delta=c'\setminus c$ and $c^0=c$. Since $M_2$ is a Cauchy-extension of $M_1$, the curve $c'$ intersects $\Sigma_1$ exactly once.
	  On the one hand, $\Sigma_1$ and $\Sigma_0$ coincides outside the tubes $\mathcal T_\Delta$. On the other hand, notice that
	  an inextendible causal curve inside a $\T_\Delta$ intersects $\Sigma_0\cap \T$ if and only if it interests $\mathrm{Graph}(\tau_\Delta)$. 
	  Thus $c'$ also intersects $\Sigma_0$ exactly once and thus $c$ intersects $\Sigma_0$ exactly once.
	  We deduce that $M$ is a Cauchy-extension of $M_0$ and, by maximality of $M_0$, we obtain $M=M_0$.
	  Therefore, $M_2=M_1$ and $M_1$ is \textbf{ Cauchy-maximal}.

	  \end{enumerate}

      \end{proof}

      \subsection{Proof of the Main Theorem}
      
        \begin{theo}[Cauchy-completeness Conservation] Let $M$ be a globally hyperbolic 
  $\E^{1,2}_{A}$-manifold without BTZ point, the following are equivalent.
  \begin{enumerate}[(i)]
   \item $M$ is Cauchy-complete and Cauchy-maximal.
   \item There exists a Cauchy-complete and Cauchy-maximal  BTZ-extension of $M$.
   \item The maximal BTZ-extension of $M$ is Cauchy-complete and Cauchy-maximal.
  \end{enumerate}
  \end{theo}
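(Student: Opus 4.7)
The plan is to close the cycle of implications $(i)\Rightarrow(iii)\Rightarrow(ii)\Rightarrow(i)$ by invoking the two substantial propositions already established in Sections \ref{sec:de-BTZ} and \ref{sec:BTZ-fication}. The heavy analytic work (surgery of Cauchy-surfaces around BTZ lines, completeness and acausality criteria from Lemmas \ref{lem:surface_causal} and \ref{lem:surface_complete}) has already been carried out, so what remains is purely a matter of bookkeeping.

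First, $(i)\Rightarrow(iii)$ is immediate from Proposition \ref{prop:Theo_part_2_a}: starting from $M$ without BTZ points that is Cauchy-complete and Cauchy-maximal, the proposition asserts that its maximal BTZ-extension is Cauchy-complete and Cauchy-maximal. Next, $(iii)\Rightarrow(ii)$ is trivial, since the maximal BTZ-extension is itself a BTZ-extension of $M$ and thus serves as a witness.

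The remaining implication $(ii)\Rightarrow(i)$ will be deduced from Proposition \ref{prop:Theo_part_1}. Suppose $N$ is a Cauchy-complete and Cauchy-maximal BTZ-extension of $M$. By the definition of BTZ-extension, $N\setminus M$ is a (possibly empty) union of connected components of $\sing_0(N)$. Conversely, since $M$ contains no BTZ point, no point of the image of $M$ in $N$ can belong to $\sing_0(N)$ (by the type-preservation built into Definition \ref{def:singular_spacetime} via Proposition \ref{prop:isom}). Hence $\sing_0(N)=N\setminus M$, and accordingly $M=N\setminus\sing_0(N)$. Applying Proposition \ref{prop:Theo_part_1} to $N$ then yields that $N\setminus\sing_0(N)=M$ is Cauchy-complete and Cauchy-maximal, which is $(i)$.

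The only point requiring mild care is the identification $\sing_0(N)=N\setminus M$ at the end of $(ii)\Rightarrow(i)$; this is where the hypothesis that $M$ has no BTZ points is genuinely used. Without that assumption, one would only obtain $M\subset N\setminus\sing_0(N)$, and Proposition \ref{prop:Theo_part_1} would describe a strictly smaller sub-spacetime than $M$. I expect no further obstacle: the rest of the argument is a mechanical chaining of two results that have already done all the analytic work.
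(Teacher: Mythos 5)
Your proposal is correct and follows essentially the same route as the paper, which likewise disposes of the theorem by citing Proposition \ref{prop:Theo_part_2_a} for $(i)\Rightarrow(iii)$, noting $(iii)\Rightarrow(ii)$ is trivial, and citing Proposition \ref{prop:Theo_part_1} for $(ii)\Rightarrow(i)$. The one point you spell out that the paper leaves implicit --- the identification $M=N\setminus\sing_0(N)$ via the no-BTZ-point hypothesis and the type preservation from Proposition \ref{prop:isom} --- is a genuine and worthwhile clarification, not a deviation.
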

      
    \begin{proof}
    The implication $(iii) \Rightarrow (ii)$ is obvious 
     The implication $(ii)\Rightarrow (i)$ is given by Propositions \ref{prop:Theo_part_1}. 
     The implication $(i)\Rightarrow (iii)$ is given by Proposition \ref{prop:Theo_part_2_a}.
     
    \end{proof}

    \addcontentsline{toc}{section}{References}
    \bibliographystyle{alpha}
    \bibliography{note} 
\end{document}